\numberwithin{equation}{section}
\newtheorem{theorem}{Theorem}[section]
\newtheorem{corollary}{Corollary}[theorem]
\newtheorem{lemma}[theorem]{Lemma}
\newtheorem{remark}[theorem]{Remark}
\numberwithin{equation}{section}
\title{\textbf{\Large Higher order dualities between prime ideals}}
\author{\textit{Sroyon Sengupta}}
\date{\textit{Dedicated to  Prof. Krishnaswami Alladi on his $70^{th}$ Birthday}}
\begin{document}

\maketitle
\noindent {\small \textbf{Abstract:} \textit{Extending the works of Alladi [Al77] and Sweeting and Woo [SW18], we state and prove the general higher order duality between prime ideals in number rings. We then use the second order duality to obtain a new formula for the Chebotarev Density involving sums of the generalized Möbius function and the prime ideal counting function. We also provide two estimates of such sums as an application of the duality identity. A discussion of the duality in a slightly more general setting is done at the end. }} \\ \\
\textbf{Keywords:} Duality between prime ideals, generalized Mobius function, number of prime ideals, largest and smallest norm of prime ideals, Chebotarev Density, Chebotarev Density Theorem, Galois extensions, Prime Ideal Theorem, Artin Symbol

\section{Introduction and Background}
Since the $19^{th}$ century, the M\"obius function has been an integral and essential part of analytic number theory, and several important results and theories have come up around it. Let us first recall this significantly  important function defined on positive integers:
\begin{align}
    \mu(n) = \begin{cases}
        1,\quad \text{when $n=1$}, \\
        0,\quad \text{when $n$ is not square-free}, \\
        (-1)^k,\quad \text{when $n=p_1p_2...p_k$, where $p_i$'s are distinct prime numbers.}
    \end{cases} \nonumber
\end{align}
One of the most well-known examples of the use of the M\"obius function is in its relation with the Riemann Zeta function. Using the Euler product formula of $\zeta(s)$, one can show that the Dirichlet series, with $\mu(n)$ as its coefficients, is the reciprocal of the zeta function. Further, 
\begin{equation}
    \sum_{n=1}^{\infty}\frac{\mu(n)}{n} = \lim_{s\to 1^+}\frac{1}{\zeta(s)} = 0. \nonumber
\end{equation}
Landau [LaT99] proved that 
\begin{align}
    \sum_{n=1}^{\infty}\frac{\mu(n)}{n} = 0
\end{align}
is (elementarily) equivalent to the Prime Number Theorem. A surprising generalization of (1.1) was observed by Alladi [Al77] in 1977, as a consequence of the following Duality Lemma he proved in the same paper. He proved that if $f$ is a function defined on primes, then:
\begin{align}
    \sum_{1<d|n}\mu(d)f(P_1(d)) &= -f(p_1(n)), \nonumber \\
    \sum_{1<d|n} \mu(d)f(p_1(d)) &= -f(P_1(n)),
\end{align}
where $P_1(n)$ and $p_1(n)$ are the largest and the smallest prime factors of the integer $n$, respectively. Using (1.2) and some properties of the Mobius function, Alladi showed that if $f$ is a bounded function on the primes such that
\begin{align}
    \lim_{x\to \infty}\frac{1}{x}\sum_{2\leq n\leq x}f(P_1(n)) = c,    
\end{align}
then 
\begin{align}
    \sum_{n=2}^{\infty} \frac{\mu(n)f(p_1(n))}{n}=-c,
\end{align}
and vice versa. We notice that (1.1) is indeed a special case of (1.4). If we choose $f$ to be an arithmetic function on primes defined as $f(p)=1$ for all primes $p$, then such an $f$ will yield from (1.3) that 
\begin{align}
    \lim_{x\to \infty}\frac{1}{x}\sum_{2\leq n \leq x}f(P_1(n)) = \lim_{x\to \infty}\frac{[x]-1}{x} = 1. \nonumber
\end{align}
Then, from (1.4), we get for the same $f$ that
\begin{align}
    \sum_{n=2}^{\infty}\frac{\mu(n)}{n} = -1, 
\end{align}
which is nothing but (1.1) with a slight rearrangement. \\ \\
Another example of $f$ is the characteristic function of primes in an arithmetic progression $\ell\;(mod\;k)$. In [Al77], it was proved that the Prime Number Theorem in Arithmetic Progressions implies that the sequence $\{P_1(n)\}_n$ of the largest prime factors of integers $n$ is uniformly distributed in the reduced residue classes modulo a positive integer, say $k$. This further implies that, with such a choice of $f$ mentioned right above, the average of $f(P_1(n))$ exists, and is $\frac{1}{\varphi(k)}$. Therefore, the value of $c$ in (1.3) for this corresponding choice of $f$ is $\frac{1}{\varphi(k)}$ and hence, by (1.4),
\begin{align}
    \sum_{\substack{n=2 \\ p_1(n)\equiv\ell \;(mod\;k)}}^{\infty}\frac{\mu(n)}{n} = -\frac{1}{\varphi(k)},
\end{align}
for all positive integers $k,\ell$, such that $1\leq \ell<k$ and $(\ell,k)=1$. We note here that the sum on the left of (1.6) gives us a way of slicing the sum on the left of (1.5) into $\varphi(k)$ subseries, each of them converging to the same value! Alladi gave a quantitative proof of (1.6) in [Al77], which involves the use of the duality identity (1.2). \\ \\
In 2017, Dawsey [Da17] chose $f$ to be the characteristic function of primes satisfying the condition $\left[\frac{K/\mathbb{Q}}{p}\right]=C$, where $K$ is a finite Galois extension of the field of rationals $\mathbb{Q}$ and $C$ is a fixed conjugacy class of the Galois group $G = Gal(K/\mathbb{Q})$. She first showed that (see \textit{Theorem 2} in [Da17]) that with the choice of the above $f$, the density of the sequence $\{f(P_1(n))\}_n$ for integers $n$ is $\frac{|C|}{|G|}$, i.e. the constant $c$ in (1.3) is $\frac{|C|}{|G|}$. Hence, by (1.4) again, we get (see \textit{Theorem 1} [Da17]) 
\begin{align}
     \sum_{\substack{n=2 \\ \left[\frac{K/\mathbb{Q}}{p_1(n)}\right]=C}}^{\infty}\frac{\mu(n)}{n} = -\frac{|C|}{|G|}.
\end{align}
What stands out in [Da17] is the quantitative nature of the results proved by Dawsey, which she is able to derive using the strong form of the Chebotarev Density Theorem given by Lagarias-Odlyzko [LO77]. It is important to note here that the setting in which (1.7) has been proved considers the base field of the Galois extension to be fixed as the field of rationals, although the choice of $K$ is ``almost"\footnote[1]{``almost", since $K$ must abide by the condition that $K/\mathbb{Q}$ is finite and Galois} arbitrary. \\ \\
Extending the previous works by Alladi and Dawsey, in 2018, Sweeting and Woo [SW18] proved similar density-type results by making the choice of the base field of the Galois extension to be any arbitrary number field. In particular, if $C\subset G=Gal(L/K)$ is a conjugacy class, then they proved the following new formula for the Chebotarev Density $\frac{|C|}{|G|}$:
\begin{align}
    -\lim_{X\to \infty} \sum_{\substack{2 \leq N(I)\leq X \\ I \in S(L/K;C)}} \frac{\mu_K(I)}{N(I)} = \frac{|C|}{|G|},
\end{align}
where $\mu_K(I)$ denotes the generalized M\"obius function (see below) and $S(L/K;C)$ is the set of ideals $I \in \mathcal{O}_K$ such that $I$ has a unique prime divisor $\mathfrak{p}_1$ of minimum norm and the Artin sumbol $\left[\frac{L/K}{\mathfrak{p}}\right]$ is $C$. We would like to note here that proving (1.8) required new machinery, as Alladi's Duality lemma is only valid for the classical case. Since Dawsey proves her results by considering the base field to be $\mathbb{Q}$, Alladi Duality works fine as it is applicable to integers, which act like the ideals in $\mathcal{O}_{\mathbb{Q}}$, i.e., $\mathbb{Z}$. Hence, Sweeting and Woo prove the following first order duality between prime ideals (see \textit{Lemma 2.1} in [SW18]): if $f(I)$ denotes the indicator function of $S(L/K;C)$, then for ideals $I$, we have
\begin{align}
    \sum_{J \supset I}\mu_K(J)f(J) =-Q_C(J),
\end{align}
where the definition of the quantity in the RHS above is given by
\begin{align}
    Q_C(I) = \#\left\{I \subset \mathfrak{p}:N(\mathfrak{p})=M(I)\;\text{and}\;\left[\frac{L/K}{\mathfrak{p}}\right]=C\right\},
\end{align}
where $M(I) = \max_{I \subset \mathfrak{p}}N(\mathfrak{p})$. This duality between the prime ideals with the smallest and largest norms proved crucial in proving (1.8). Not only that, the proof of (1.8) is also quantitative in nature, which makes it even more interesting. Several other results in [SW18] are extensions of those in both [Al77] and [Da17], hence connecting the ideas of both the classical case and the algebraic case. The most important one to take note of here is \textit{Theorem 1.1}, where they use the strong form of CDT. One can indeed find a correspondence between the same and the uniform distribution of the largest prime factors in an arithmetic progression $\ell\;(mod\;k)$ [Al77] and the existence of the density of $f(P_1(n))$, where $f$ is the characteristic function of the prime numbers satisfying $\left[\frac{K/\mathbb{Q}}{\mathfrak{p}}\right]=C$ [Da17]. \\ \\
In [Al77], Alladi also proved a general duality lemma concerning the $k^{th}$ smallest or the $k^{th}$ largest prime factors of integers accordingly. In 2024, he and Johnson used a version of his second order duality to prove that (see \textit{Theorem 13} [AJ24]) for a function $f$ that is bounded on primes such that
\begin{align}
    \sum_{2 \leq n \leq x}f(P_1(n)) \sim \kappa x \quad \text{and} \quad \sum_{2 \leq n \leq x} f(P_2(n)) \sim \kappa x,
\end{align}
then
\begin{align}
    \sum_{n=2}^{\infty} \frac{\mu(n)f(p_1(n))\omega(n)}{n}=0.
\end{align}
We note here that $\omega(n)$ is defined by the number of distinct prime factors of $n$ and $P_2(n)$ is defined to be the second largest prime factor of $n$. In both [AJ24] and [Se25], there have been discussions on the definitions of the second largest prime factors. Following them, $P_2(n)$ here is defined by the largest prime factor of $n$ strictly less than $P_1(n)$. Their paper itself gives us the two examples that help us understand the above \textit{Theorem 13}. More enchantingly, both of those examples have been proved as theorems (see \textit{Theorem 4(i)} and \textit{Theorem 10} in [AJ24]), and the proofs are quantitative in nature. \\ \\
For the choice of a function $f$ defined on primes such that $f(p)=1$ for all primes, the constant $\kappa$ in (1.11) exists and is 1, and therefore,
\begin{align}
    \sum_{n=2}^{\infty}\frac{\mu(n)\omega(n)}{n} =0
\end{align}
holds. This is exactly the qualitative form of \textit{Theorem 4(i)}. Further, the choice of $f$ to be the characteristic function of primes in the arithmetic progression $\ell\;(mod\;k)$ gives us that $\kappa = \frac{1}{\varphi(\ell)}$ (see \textit{Theorem 7} [AJ24]) and for such co-prime integers $\ell,k$,
\begin{align}
    \sum_{\substack{n=2 \\ p_1(n)\equiv \ell\;(mod\;k)}}^{\infty}\frac{\mu(n)\omega(n)}{n} =0.
\end{align}
The author of this paper, following in the lines of Dawsey, proved recently [Se25] that the choice of $f$ to be the characteristic function of primes satisfying the Artin symbol condition $\left[\frac{K/\mathbb{Q}}{p}\right]=C$ yields $\kappa=\frac{|C|}{|G|}$ (see \textit{Theorem 2.1} [Se25]), i.e. the Chebotarev Density, and therefore, proves
\begin{align}
     \sum_{\substack{n=2 \\ \left[\frac{K/\mathbb{Q}}{p}\right]=C}}^{\infty}\frac{\mu(n)\omega(n)}{n} =0.
\end{align}
Of course, [Se25] gives a quantitative proof of (1.15) (see \textit{Theorem 2.5}) using the strong form of CDT. An arithmetic density version was also noted as follows: 
\begin{align}
     \sum_{\substack{n=2 \\ \left[\frac{K/\mathbb{Q}}{p}\right]=C}}^{\infty}\frac{\mu(n)(\omega(n)-1)}{n} = \frac{|C|}{|G|}, \nonumber
\end{align}
providing us a new formula for the Chebotarev Density. \\ \\
Like Dawsey, [Se25] also extends results in [AJ24] to the setting of finite Galois extensions with the base field chosen to be the field of rationals $\mathbb{Q}$. With the work of Sweeting and Woo in the picture as discussed above, it is quite natural to study the generalization of the results of Alladi and Johnson, and the author himself, to the setting of arbitrary finite Galois extensions. In that case, too, one would need a new version of duality between prime ideals that involves prime ideals of $k^{th}$ largest norms. In this paper, we first state and prove (see \S 2, \textit{Theorem 2.4}) the higher order duality lemma between prime ideals. We then use the second order duality (see \textit{Theorem 2.3}) specifically to prove the density type extensions\footnote[2]{new formula for the Chebotarev Density} of results in [AJ24] and [Se25]. Sections that will follow the proofs of our main theorems will also provide applications of this new general duality between prime ideals in number fields and remark on how it can prove to be essential in generalizing all such duality-related results that have been derived by several other researchers (see \S5,\S6) in the classical case. For now, let us look at the necessary notations and preliminary theorems that we will use in the proofs to come. \\ \\
Throughout the rest of the paper, $L/K$ will denote a finite Galois extension of number fields such that $G= Gal(L/K)$. The corresponding rings of integers of the fields involved in this extension will be denoted as $\mathcal{O}_K$ and $\mathcal{O}_L$. Let $\mathfrak{p} \in \mathcal{O}_K$ denote a prime ideal in $\mathcal{O}_K$. Then the ideal generated by $\mathfrak{p}$ in $\mathcal{O}_L$ can be decomposed into a product of distinct prime ideals $\mathfrak{p}_i$'s in $\mathcal{O}_L$ lying above $\mathfrak{p}$, i.e. 
\begin{align}
    \mathfrak{p}\cdot\mathcal{O}_L = \prod_{i=1}^k \mathfrak{p}_i^{e_i}, 
\end{align}
where $e_i$'s are positive integers. Our only concern, in the results that follow, is prime ideals in $\mathcal{O}_K$ that are unramified in $\mathcal{O}_L$, i.e., the corresponding integers $e_i$, for each $i$, are 1. We further recall that the absolute norm of a non-zero ideal $\mathcal{I}$ in any number ring $\mathcal{O}$ is given by
\begin{align}
    N(\mathcal{I}):=[\mathcal{O}:\mathcal{I}] = |\mathcal{O}/\mathcal{I}|. \nonumber
\end{align}
We now introduce the definition of the Artin symbol corresponding to a prime ideal $\mathfrak{p}$ in $\mathcal{O}_K$. For a given prime ideal $\mathfrak{p} \subset \mathcal{O}_K$, unramified in $\mathcal{O}_L$, let us consider the decomposition noted above in (1.16) with each $e_i=1$. For every prime ideal $\mathfrak{p}_i$ in (1.16), the Artin symbol $\left[\frac{L/K}{\mathfrak{p}_i}\right]$ is defined as the unique\footnote[3]{the unramification of $\mathfrak{p}$ is necessary to ensure this uniqueness} isomorphism $\sigma \in G$ satisfying the condition
\begin{align}
    \sigma(\alpha) \equiv \alpha^{N(\mathfrak{p})}\;(mod\;\mathfrak{p}_i), \nonumber
\end{align}
for every $\alpha \in L$. Now, for any member of $G$, say $\tau$, the ideals $\mathfrak{p}_i$'s are isomorphic to each other under $\tau$ in a particular permutation, depending on the definition of $\tau$. Moreover, for such $\tau$, we have
\begin{align}
    \left[\frac{L/K}{\tau(\mathfrak{p}_i)}\right] = \tau\left[\frac{L/K}{\mathfrak{p}_i}\right]\tau^{-1}, \nonumber
\end{align}
holds. Therefore, we get a conjugacy class $C \subset G$ associated to the prime ideal $\mathfrak{p}$. We then define the Artin Symbol $\left[\frac{L/K}{\mathfrak{p}}\right]$  to a prime ideal $\mathfrak{p}$ in the ring of integers $\mathcal{O}_K$ to be the associated conjugacy class $C$. \\ \\
Let us now denote the set of all prime ideals in $\mathcal{O}_K$ by $\mathfrak{P}(K)$. Given a conjugacy class $C \subset G$, we define
\begin{align}
    \mathfrak{P}_C : = \left\{\mathfrak{p} \in \mathfrak{P}(K):\;\mathfrak{p} \text{ is unramified in $L$ and } \left[\frac{L/K}{\mathfrak{p}}\right]=C \right\}. \nonumber
\end{align}
We denote $\pi_C(X,L/K)$ to be the number of non-zero prime ideals $\mathfrak{p} \subset \mathcal{O}_K$ that are unramified in $L$ with $N(\mathfrak{p}) \leq X$ and their corresponding Artin Symbol to be a fixed conjugacy class $C$.
\begin{theorem}{(Chebotarev Density Theorem [Ts26])}
    Let $L/K$ be a finite Galois extension and $C$ be a conjugacy class of the Galois group $G=Gal(L/K)$. Then the natural density of $\mathfrak{P}_C$ defined by 
    \begin{align}
        \lim_{x\to \infty} \frac{\#\{\mathfrak{p} \in \mathfrak{P}_C:N(\mathfrak{p}) \leq X\}}{\#\{\mathfrak{p} \in \mathfrak{P}(K): N(\mathfrak{p}) \leq X\}} \nonumber
    \end{align}
    exists and is equal to the ratio $\frac{|C|}{|G|}$. More precisely, as $x \to \infty$
    \begin{align}
        \pi_C(X,L/K) = \frac{|C|}{|G|}\cdot \frac{X}{\log X}+o\left(\frac{X}{\log X}\right). \nonumber
    \end{align}
\end{theorem}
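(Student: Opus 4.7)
The plan is to follow the classical analytic strategy: encode the characteristic function of the conjugacy class $C$ via characters of $G$, translate the prime counting problem into an analytic statement about Artin $L$-functions, and extract the asymptotic via a Tauberian theorem. Concretely, orthogonality on $G$ gives $1_C(g) = \frac{|C|}{|G|}\sum_{\chi} \overline{\chi(c)}\chi(g)$, where the sum runs over irreducible characters and $c \in C$. Applying this to the Frobenius at an unramified $\mathfrak{p}$ decomposes $\pi_C(X, L/K)$ into a linear combination, with coefficients $\frac{|C|}{|G|}\overline{\chi(c)}$, of partial sums $\sum_{N(\mathfrak{p}) \leq X} \chi(\operatorname{Frob}_{\mathfrak{p}})$. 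By the Euler product of Artin $L$-functions, these partial sums are controlled by the analytic behaviour of $L(s, \chi, L/K)$ near $s = 1$.

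The trivial character produces the Dedekind zeta function $\zeta_K(s)$, which has a simple pole at $s=1$ and yields the main term $\frac{|C|}{|G|}\cdot\frac{X}{\log X}$. The crux is handling the nontrivial $\chi$: one needs $L(s,\chi,L/K)$ to be holomorphic and non-vanishing on the vertical line $\operatorname{Re}(s) = 1$. This is where the main obstacle lies, since for nonabelian $G$ it is not known that these Artin $L$-functions are entire. The standard resolution is Brauer's induction theorem, which writes each $\chi$ as an integer combination of characters induced from one-dimensional characters of (nilpotent, hence solvable) subgroups $H \leq G$; by the Artin formalism, the corresponding $L$-function factors as a product of Hecke $L$-functions over the intermediate fixed fields $L^H$, and for these, holomorphy on $\operatorname{Re}(s) \geq 1$ away from $s=1$ and non-vanishing at $s=1$ are the classical theorems of Hecke and de la Vall\'ee Poussin. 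Historically, Chebotarev himself bypassed Brauer (then unavailable) by a clever cyclotomic field-crossing argument, reducing to Frobenius's abelian density theorem.

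Finally, I would feed this analytic input into a Tauberian theorem: the Wiener--Ikehara theorem applied to the Dirichlet series $\sum_\mathfrak{p} \chi(\operatorname{Frob}_{\mathfrak{p}}) N(\mathfrak{p})^{-s}$ extracts the natural density $\frac{|C|}{|G|}$, while a Landau-type refinement tracking the partial-summation behaviour against $\zeta_K$ sharpens this to the asymptotic $\pi_C(X, L/K) = \frac{|C|}{|G|}\frac{X}{\log X} + o(X/\log X)$ stated in the theorem. A more delicate Perron-formula contour shift, using known zero-free regions for Hecke $L$-functions, upgrades the $o(X/\log X)$ to the effective error term of Lagarias--Odlyzko, but that refinement is not needed for the qualitative statement above.
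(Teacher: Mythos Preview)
Your outline is a reasonable high-level sketch of the standard analytic proof of the Chebotarev Density Theorem (character orthogonality, reduction to analytic properties of Artin $L$-functions, Brauer induction to reduce to Hecke $L$-functions, Tauberian argument). However, the paper does not contain a proof of this statement to compare against: Theorem~1.1 is stated as a classical result with a citation to Chebotarev's original paper [Ts26], and the paper simply invokes it (and the stronger Lagarias--Odlyzko form, Theorem~1.2) as background input for the main results in \S3--\S5. So there is no ``paper's own proof'' here; the theorem is quoted, not proved.

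That said, if you intend your sketch as a self-contained justification, be aware that it glosses over the genuinely delicate point: you need non-vanishing of all the relevant Hecke $L$-functions on the line $\operatorname{Re}(s)=1$, and the usual de la Vall\'ee Poussin argument only directly rules out real zeros of at most one such $L$-function (the possible Siegel zero). Handling this uniformly across all characters appearing in the Brauer decomposition requires care, and the Wiener--Ikehara formulation you propose assumes the pole/regularity structure is already established. For the purposes of this paper none of that matters, since the result is taken as known.
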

\noindent In our proofs, we will use a stronger form of the above \textit{Theorem 1.1} with a more precise formulation of the error given by Lagarias and Odlyzko [LO77] as follows:
\begin{theorem}
    (Stronger form of CDT [LO77]) For sufficiently large $X \geq \Tilde{C}$, where $\Tilde{C}$ depends on both the absolute discriminant and the degree of extension of $L$ (over $\mathbb{Q}$), we have that 
    \begin{align}
        \left|\pi_C(X,L/K)-\frac{|C|}{|G|}Li(X)\right| \ll Xe^{-c_1\sqrt{\frac{\log x}{n_L}}}, \nonumber
    \end{align}
    for an appropriate constant $c_1$, where $n_L=[L:\mathbb{Q}]$ and $Li(x) = \int_2^x\frac{dt}{\log t}$.
\end{theorem}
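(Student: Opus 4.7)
The plan is to follow the standard route used by Lagarias and Odlyzko to obtain an effective Chebotarev density theorem, namely to combine character orthogonality on $G$ with the analytic theory of Artin and Hecke $L$-functions. First, I would use orthogonality of irreducible characters of $G$ to write
\begin{align}
\pi_C(X, L/K) = \frac{|C|}{|G|} \sum_{\chi} \overline{\chi}(g_C)\, \pi(X, \chi, L/K), \nonumber
\end{align}
where $g_C \in C$ is any fixed representative, the sum runs over irreducible characters $\chi$ of $G$, and $\pi(X,\chi,L/K) = \sum_{N(\mathfrak{p})\leq X}\chi(\sigma_\mathfrak{p})$ with $\sigma_\mathfrak{p}$ denoting the Frobenius class attached to an unramified $\mathfrak{p}$. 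The trivial character contributes the main term $\frac{|C|}{|G|}\mathrm{Li}(X)$ through the Prime Ideal Theorem for $K$ with an effective error, so the task reduces to bounding the non-trivial character sums.

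Second, by Brauer's induction theorem, each Artin $L$-function $L(s,\chi,L/K)$ factors as a product and quotient of abelian Hecke $L$-functions attached to intermediate cyclic extensions. These Hecke $L$-functions are entire for non-trivial characters, satisfy a functional equation, and admit a Hadamard factorisation, so they can be analysed by classical tools. The key analytic input is a zero-free region of the shape
\begin{align}
\sigma \geq 1 - \frac{c}{\log\bigl(|d_L|(|t|+2)^{n_L}\bigr)}, \nonumber
\end{align}
with at most one possible exceptional real ``Siegel'' zero attached to a real character. Applying Perron's formula to $-L'/L$ and shifting the contour past the line $\Re(s)=1$ then produces an explicit formula for the von Mangoldt-weighted counting function $\psi(X,\chi,L/K)$ whose error is controlled by the zeros in the critical strip.

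Third, summing the explicit formula over characters and converting from $\psi_C(X)$ to $\pi_C(X)$ by partial summation yields the asymptotic of $\pi_C$; optimising the contour shift against the above zero-free region produces exactly the stated error term $X\exp\bigl(-c_1\sqrt{\log X / n_L}\bigr)$, provided that $X$ exceeds an effective threshold $\tilde C$ depending on $|d_L|$ and $n_L$.

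The main obstacle, and the reason $\tilde C$ cannot be made absolute, is the potential presence of a Siegel zero of $\zeta_L(s)$ or of a real Hecke $L$-function appearing in the Brauer decomposition. Lagarias and Odlyzko resolve this via the Deuring--Heilbronn repulsion phenomenon: if an exceptional real zero sits unusually close to $s=1$, it forces every other non-trivial zero strictly further left than the standard region, so that either the zero-free region holds outright or the repelled zeros supply an equally strong effective bound. Propagating this dichotomy uniformly across the Brauer virtual character decomposition, without losing the required uniformity in the conductor and discriminant of $L$, is the delicate technical heart of the argument.
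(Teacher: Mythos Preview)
The paper does not prove this statement at all: Theorem~1.2 is simply quoted from Lagarias--Odlyzko [LO77] as background and used as a black box in the later estimates (see the proofs of \textit{Lemma 3.4} and \textit{Theorem 4.1}). There is therefore nothing in the paper to compare your argument against.

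That said, your outline is a faithful high-level sketch of the actual Lagarias--Odlyzko proof: the reduction via character orthogonality, Brauer induction to Hecke $L$-functions, the standard zero-free region, the explicit formula, and the Deuring--Heilbronn treatment of a possible Siegel zero are exactly the ingredients of [LO77]. For the purposes of this paper, however, you should simply cite the result rather than attempt to reproduce the argument.
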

\noindent It is important to note the error estimate above in \textit{Theorem 1.2} as it is a crucial element in the proofs to come. We also note Landau's form of the Prime Ideal Theorem [La03], which will also be essential later on. We denote $\pi(K;X)$ to be the number of prime ideals in $\mathcal{O}_K$ whose norm is bounded by $X$. 
\begin{theorem}
    (Prime Ideal Theorem [La03]) If $K$ is a number field, then there exists a positive constant $c_2$ such that for large enough $X$, we have
    \begin{align}
        |\pi(K;X)-Li(X)| \ll Xe^{-c_2\sqrt{\log X}} .\nonumber
    \end{align}
\end{theorem}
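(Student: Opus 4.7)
The plan is to follow the classical analytic approach, exactly as in Hadamard--de la Vall\'ee Poussin's proof of the Prime Number Theorem, with the Riemann zeta function replaced by the Dedekind zeta function $\zeta_K(s)=\sum_{\mathfrak{a}\subset\mathcal{O}_K}N(\mathfrak{a})^{-s}=\prod_{\mathfrak{p}}(1-N(\mathfrak{p})^{-s})^{-1}$. Rather than attacking $\pi(K;X)$ directly, I would first estimate the weighted sum $\psi_K(X)=\sum_{N(\mathfrak{p}^m)\leq X}\log N(\mathfrak{p})$, whose Dirichlet series is the logarithmic derivative $-\zeta_K'/\zeta_K(s)$, and then recover the estimate on $\pi(K;X)$ through partial summation together with the trivial bound $O(X^{1/2}\log X)$ for the contribution of proper prime powers.

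Two analytic inputs are needed. First, the meromorphic continuation of $\zeta_K$ to $\mathbb{C}$ with a unique simple pole at $s=1$, together with Hecke's functional equation; this can be established via Hecke's theta-function method or, equivalently, via Tate's thesis. Second, and crucially, a zero-free region of de la Vall\'ee Poussin type: $\zeta_K(\sigma+it)\neq 0$ for $\sigma>1-c/\log(|t|+2)$. This is obtained from the positivity identity $3+4\cos\theta+\cos 2\theta\geq 0$ applied to $\log|\zeta_K(\sigma)^{3}\zeta_K(\sigma+it)^{4}\zeta_K(\sigma+2it)|$, combined with polynomial growth of $\zeta_K$ on vertical strips, which follows from the functional equation and the Phragm\'en--Lindel\"of convexity principle.

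With these inputs assembled, I would apply a truncated Perron formula to write
\begin{align}
\psi_K(X)=\frac{1}{2\pi i}\int_{2-iT}^{2+iT}\left(-\frac{\zeta_K'}{\zeta_K}(s)\right)\frac{X^{s}}{s}\,ds+O\!\left(\frac{X\log^{2}X}{T}\right),\nonumber
\end{align}
then shift the contour to the line $\Re(s)=1-c/\log T$, collecting the residue $X$ at the pole $s=1$, and bound the horizontal and shifted vertical integrals using the zero-free region together with standard bounds on $\zeta_K'/\zeta_K$ inside that region. Choosing $T=\exp(c'\sqrt{\log X})$ balances the two error terms and produces $\psi_K(X)=X+O(X\exp(-c_2\sqrt{\log X}))$; partial summation then converts this into the stated bound on $\pi(K;X)-\mathrm{Li}(X)$.

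The main obstacle is the zero-free region. Two issues that do not appear in the $\mathbb{Q}$ case require genuine care: controlling how the implied constants depend on the invariants of $K$ (the absolute discriminant and the degree $[K:\mathbb{Q}]$), which is what ultimately determines the constant $c_2$; and handling a potential exceptional real zero of $\zeta_K$ close to $s=1$ (a Siegel zero). For the qualitative statement with $K$ fixed, one may either push such a zero strictly to the left of the contour by an ineffective compactness argument or, as in Landau's original treatment, absorb its contribution into the implied constants. This delicate step is precisely what distinguishes the full error-term theorem from the weaker asymptotic $\pi(K;X)\sim X/\log X$.
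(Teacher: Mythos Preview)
The paper does not prove this statement. Theorem~1.3 is quoted in the introductory section as a classical result due to Landau [La03], alongside the Chebotarev Density Theorem (Theorem~1.1), its Lagarias--Odlyzko strengthening (Theorem~1.2), and the ideal-counting asymptotic of Murty--Van Order (Theorem~1.5). All of these are stated without proof and used as black-box inputs in Sections~3--5; the paper's own contributions begin with the duality identities in Section~2. So there is no proof in the paper to compare your proposal against.

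That said, your outline is the standard and correct route to the Prime Ideal Theorem with de~la~Vall\'ee~Poussin error term: work with $\psi_K$ via $-\zeta_K'/\zeta_K$, establish the classical zero-free region using the $3+4\cos\theta+\cos 2\theta$ inequality and convexity bounds, apply a truncated Perron formula, shift the contour, and optimise $T$. Your remarks on the two subtleties beyond the $K=\mathbb{Q}$ case---dependence of constants on $d_K$ and $[K:\mathbb{Q}]$, and the possible Landau--Siegel zero---are appropriate, though for the theorem as stated (with $K$ fixed and only an unspecified $c_2$) the Siegel-zero issue can simply be absorbed into the ineffective constant, as you note. Nothing in your sketch is wrong; it is just not something the paper undertakes.
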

\noindent Note here that $c_1=c_2$ as if since considering $K = \mathbb{Q}$, \textit{Theorem 1.3} directly follows from \textit{Theorem 1.2}.
\begin{remark}
    Theorem 1.3 is the more explicit version of the Prime Ideal Theorem by Landau. It is also worthwhile to note the asymptotic version of the same (see also [MV07], p. 194), i.e., if $K$ is a number field then 
    \begin{align}
        \pi(K;X) \sim \frac{X}{\log X} .\nonumber
    \end{align}
    We will be using this version quite often in our proofs.
\end{remark}
\noindent Another important theorem that gives us the count of the ideals of bounded norm will also be needed later - the following is Murty and Van Order's [MO07] explicit version of a classical asymptotic for the number of ideals.
\begin{theorem}
    As $X \to \infty$, we have
    \begin{align}
        \sum_{N(I)\leq X}1 = c_K\cdot X +O\left(X^{1-\frac{1}{d}}\right), \nonumber
    \end{align}
    where $c_K$ is the residue of the simple pole of the Dedekind Zeta function $\zeta_K(s)$ at $s=1$, given by
    \begin{align}
        c_K:= \frac{2^{r_1}\cdot (2\pi)^{r_2}\cdot Reg_K\cdot h_K}{w_K\cdot \sqrt{|D_K|}}, \nonumber
    \end{align}
    where $r_1$ and $2r_2$ are the number of real and complex embeddings of $K$ respectively, $h_K$ is the class number, $Reg_K$ is the regulator, $w_K$ is the number of roots of unity in $K$ and $D_K$ is the discriminant.
\end{theorem}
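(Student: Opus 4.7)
The plan is to prove Theorem 1.5 via Landau's classical geometry-of-numbers approach. First, I partition the integral ideals by their ideal class. For each class $\mathcal{C} \in \mathrm{Cl}(K)$, I fix an integral ideal $\mathfrak{b}$ in the inverse class $\mathcal{C}^{-1}$. The correspondence $I \mapsto \alpha$, defined by $(\alpha) = I\mathfrak{b}$, is a bijection between integral ideals $I$ in class $\mathcal{C}$ with $N(I) \leq X$ and $\mathcal{O}_K^\times$-orbits of nonzero $\alpha \in \mathfrak{b}$ with $|N_{K/\mathbb{Q}}(\alpha)| \leq X \cdot N(\mathfrak{b})$, since $N(I) = |N(\alpha)|/N(\mathfrak{b})$. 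Counting ideals is thus reduced to counting unit-orbits of lattice elements.

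Next, I embed via the Minkowski map $\sigma: K \hookrightarrow K_\mathbb{R} := \mathbb{R}^{r_1} \times \mathbb{C}^{r_2} \cong \mathbb{R}^d$, so that $\sigma(\mathfrak{b})$ is a full-rank lattice of covolume $2^{-r_2}\sqrt{|D_K|}\, N(\mathfrak{b})$, and the field norm extends to a polynomial $|N|$ on $K_\mathbb{R}$ homogeneous of degree $d = [K:\mathbb{Q}]$. Using the logarithm map $L: K_\mathbb{R}^\times \to \mathbb{R}^{r_1+r_2}$ together with Dirichlet's unit theorem, I build a fundamental domain $F \subset K_\mathbb{R}^\times$ for the multiplicative action of $\mathcal{O}_K^\times/\mu_K$ that is a cone invariant under positive-real dilation. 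The set $\Omega := F \cap \{x : |N(x)| \leq 1\}$ is then bounded, and a direct volume computation, with the regulator $\mathrm{Reg}_K$ arising as the Jacobian of $L$ on the chosen fundamental parallelepiped in the trace-zero hyperplane, gives $\mathrm{vol}(\Omega) = 2^{r_1}\pi^{r_2}\,\mathrm{Reg}_K$.

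I then apply the standard lattice-point-counting estimate for dilations of bounded regions with Lipschitz boundary,
\[
\#\!\left(\sigma(\mathfrak{b}) \cap t\Omega\right) = \frac{\mathrm{vol}(\Omega)\, t^{d}}{\mathrm{covol}(\sigma(\mathfrak{b}))} + O\!\left(t^{d-1}\right),
\]
with $t^d = X\, N(\mathfrak{b})$, and divide by $w_K$ since each $\mathcal{O}_K^\times$-orbit meets $F$ in exactly $w_K$ points (the roots of unity). The $N(\mathfrak{b})$ factors cancel, yielding
\[
\#\{I \in \mathcal{C} : N(I) \leq X\} = \frac{2^{r_1}(2\pi)^{r_2}\,\mathrm{Reg}_K}{w_K \sqrt{|D_K|}}\, X + O\!\left(X^{1-1/d}\right).
\]
Summing over the $h_K$ ideal classes produces the main term $c_K X$ while preserving the error $O\!\left(X^{1-1/d}\right)$.

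The main obstacle is constructing $F$ with sufficiently regular boundary to justify the sharp error exponent $1 - 1/d$ in the lattice-point count, as a cruder region with only measurable boundary would give only $o(X)$. One proceeds by choosing a fundamental parallelepiped for $L(\mathcal{O}_K^\times/\mu_K)$ inside the trace-zero hyperplane of $\mathbb{R}^{r_1+r_2}$, pulling it back via $\exp$, and combining this with the positive-real scaling factor and a finite choice of signs; the resulting $F$ has piecewise real-analytic, hence Lipschitz, boundary, so the Davenport--Lipschitz lattice-point estimate applies and delivers the claimed $O(t^{d-1})$ error. Everything else in the argument is essentially book-keeping of volumes, covolumes, and the finite sum over $\mathrm{Cl}(K)$.
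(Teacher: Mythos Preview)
Your outline is the classical Landau--Dedekind geometry-of-numbers argument and is correct in its broad strokes. However, the paper does not actually prove this statement: Theorem~1.5 is stated as a preliminary result and attributed to Murty and Van Order [MO07] (``the following is Murty and Van Order's [MO07] explicit version of a classical asymptotic for the number of ideals''), with no proof given. So there is no in-paper argument to compare against; the theorem is simply quoted as background.

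That said, your sketch matches the standard textbook proof (as in Lang or Neukirch), and the one point you correctly flag as the real work---showing that the fundamental domain $\Omega$ has Lipschitz-parametrizable boundary so that the Davenport-type lattice-point estimate yields the sharp error $O(t^{d-1})$---is indeed where all the content lies. Everything else is routine. If you were to write this up in full, that boundary-regularity verification is the step that deserves the most care.
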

\noindent In the next section, we will state and prove the general higher order duality between prime ideals. We will first look at the second order duality to understand how the proof works and then move ahead with the general proof. Note that Sweeting and Woo proved the first order duality (see \textit{Lemma 2.1} in [SW18]), and we will see how it is consistent with our general duality identities. In the sections that follow \S2, we use the second order duality to get a new formula for the Chebotarev Density, with its quantitative proof. 

\section{General Duality Between Prime Ideals} 
We start by stating the general duality lemma in the classical case given by Alladi (see \S 4 [Al77]). Let $P_k(n)$ and $p_k(n)$ denote the $k^{th}$ largest and $k^{th}$ smallest prime factors of an integer $n$, respectively. In the cases where $n=1$, or for an integer $n$ such that $\omega(n)<k$, we set $P_k(n)=1=p_k(n)$.  
\begin{lemma}
    If $f$ is an arithmetic function with $f(1)=0$, then the following four identities hold:
    \begin{align}
        \sum_{d|n} \mu(d)f(P_k(d))=(-1)^k {\omega(n)-1 \choose k-1}f(p_1(n)), \nonumber\\
    \sum_{d|n} \mu(d)f(p_k(d))=(-1)^k {\omega(n)-1 \choose k-1}f(P_1(n)), \nonumber
    \end{align}
    and
    \begin{align}
         \sum_{d|n} \mu(d){\omega(d)-1 \choose k-1}f(P_1(d))=(-1)^k f(p_k(n)), \nonumber\\
    \sum_{d|n} \mu(d){\omega(d)-1 \choose k-1}f(p_1(d))=(-1)^k f(P_k(n)) .\nonumber
    \end{align}
\end{lemma}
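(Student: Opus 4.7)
The plan is to reduce each of the four identities to a purely combinatorial statement about subsets of the prime divisors of $n$, and then to extract the unique surviving term by invoking the elementary vanishing $(1-1)^N=[N=0]$. Since $\mu(d)=0$ unless $d$ is squarefree, I would restrict the divisor sums to squarefree $d$. Writing the distinct prime factors of $n$ as $q_1<q_2<\cdots<q_r$ with $r=\omega(n)$, so that $p_j(n)=q_j$ and $P_j(n)=q_{r-j+1}$, each squarefree divisor $d$ is encoded by a subset $S\subseteq\{q_1,\ldots,q_r\}$ with $\mu(d)=(-1)^{|S|}$ and $\omega(d)=|S|$. Subsets with $|S|<k$ contribute nothing since then $P_k(d)=p_k(d)=1$ and $f(1)=0$, or the binomial weight $\binom{\omega(d)-1}{k-1}$ vanishes.

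For the first identity I would partition the sum according to the value $P_k(d)=q_j$. Such an $S$ must contain $q_j$, exactly $k-1$ elements of $\{q_{j+1},\ldots,q_r\}$ (chosen in $\binom{r-j}{k-1}$ ways), and an arbitrary subset of $\{q_1,\ldots,q_{j-1}\}$. Collecting signs gives
\begin{align*}
\sum_{d\mid n}\mu(d)f(P_k(d))=\sum_{j=1}^{r-k+1}f(q_j)\binom{r-j}{k-1}(-1)^k\sum_{m=0}^{j-1}(-1)^m\binom{j-1}{m},
\end{align*}
whose inner sum equals $(1-1)^{j-1}$ and so vanishes unless $j=1$. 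Only the $j=1$ term survives, producing exactly $(-1)^k\binom{\omega(n)-1}{k-1}f(p_1(n))$. The second identity would then be obtained from the first by the order-reversal $q_j\leftrightarrow q_{r-j+1}$, which interchanges $P_k\leftrightarrow p_k$ and $p_1\leftrightarrow P_1$.

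For the third identity, I would instead partition by the value $P_1(d)=q_j$, so that $S$ consists of $q_j$ together with an arbitrary $m$-subset of $\{q_1,\ldots,q_{j-1}\}$ and $\binom{\omega(d)-1}{k-1}=\binom{m}{k-1}$. This would yield
\begin{align*}
\sum_{d\mid n}\mu(d)\binom{\omega(d)-1}{k-1}f(P_1(d))=-\sum_{j=1}^{r}f(q_j)\sum_{m=0}^{j-1}(-1)^m\binom{j-1}{m}\binom{m}{k-1}.
\end{align*}
Using the Vandermonde-type identity $\binom{j-1}{m}\binom{m}{k-1}=\binom{j-1}{k-1}\binom{j-k}{m-k+1}$ to pull $\binom{j-1}{k-1}$ out of the $m$-sum and reindexing $\ell=m-k+1$, the inner sum becomes $(-1)^{k-1}\binom{j-1}{k-1}(1-1)^{j-k}$, which is nonzero only when $j=k$. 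That single surviving term produces $(-1)^k f(p_k(n))$, and the fourth identity then follows from the third by the same order-reversal as before.

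The main obstacle is the binomial manipulation in the third identity: the weight $\binom{\omega(d)-1}{k-1}$ creates a doubly nested binomial sum that does not telescope directly. The key observation is that the identity $\binom{n}{m}\binom{m}{k}=\binom{n}{k}\binom{n-k}{m-k}$ factors $\binom{j-1}{k-1}$ out of the $m$-summation as a constant, after which $(1-1)^{j-k}=[j=k]$ kills everything except the single term $j=k$ that is meant to survive. Once this collapse is in place, the remaining identities all reduce to the same template, with the order-reversal of the prime indexing playing the role of a formal duality between the $P$- and $p$-variants.
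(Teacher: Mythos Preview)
Your argument is correct. Note, however, that the paper does not supply its own proof of this lemma: it is quoted verbatim from Alladi's 1977 paper [Al77] and stated without proof, so there is no line-by-line comparison to make.

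That said, the paper does prove the ideal-theoretic analogues (\textit{Theorems 2.3} and \textit{2.4}), and it is worth comparing your approach to the one used there. You partition the divisor sum according to the value of $P_k(d)$ (respectively $P_1(d)$), freeze that prime, sum freely over the remaining primes, and collapse the inner sum via $(1-1)^{N}=[N=0]$; in the weighted case you invoke the trinomial revision $\binom{n}{m}\binom{m}{k}=\binom{n}{k}\binom{n-k}{m-k}$ to peel off the constant binomial factor before applying the same collapse. The paper instead writes the divisor as $J\cdot J'$, where $J$ is supported on the block of smallest-norm primes and $J'$ on the rest, and evaluates the inner sums over $J'$ by recognizing them as successive $x$-derivatives of $(1-x)^t$ at $x=1$. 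Both routes rest on the same binomial vanishing; yours is slightly more direct in the classical setting because the prime factors of $n$ are totally ordered by size, whereas the paper's block decomposition is tailored to the ideal case where several prime ideals can share the same norm.
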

\noindent As one can observe, this is applicable only for integers, and therefore, we need and will proceed to state and prove the duality version for prime ideals in arbitrary number fields. Let us first define a few quantities and sets essential for stating the duality identities. Let $N(I)$ denote the norm of an ideal $I$. For an ideal $I \subset \mathcal{O}_K$, we define $M_k(I)$ to be the norm of the prime ideals in the decomposition of the ideal $I$ with the $k^{th}$ largest norms. Of course, the quantity $M_k(I)$ makes sense only if $I$ is contained in prime ideals with $k$ many distinct norms. Note that there might not be a unique prime ideal containing $I$ with a particular norm.  In view of this fact, we define the sets $Q^k(I)$ and $Q_C^k(I)$ as follow: for a finite Galois extension $L/K$ and given a fixed conjugacy class $C \subset G = Gal(L/K)$, 
\begin{align}
     Q_C^k(I) &:= \# \left\{\mathfrak{p} :\; I\subset \mathfrak{p}; \;M_k(I)=N(\mathfrak{p})\;\text{and}\;\left[\frac{L/K}{\mathfrak{p}}\right]=C,\;\mathfrak{p}\;\text{is unramified}\right\}, \nonumber \\ 
     Q^k(I)&:=\# \{\mathfrak{p}:\;I \subset \mathfrak{p};\;M_k(I)=Nm(\mathfrak{p})\} . \nonumber
\end{align}
From now onwards, we only focus on such ideals $I$ that have a unique prime ideal of the smallest norm. For a fixed $I$ satisfying such a property, we denote the unique prime factor of $I$ with the smallest norm as $\mathfrak{p}_1$. Following the convention proposed in [SW18], we also call these ideals the \textit{salient ideals} in $\mathcal{O}_K$. Let us define the set $S(L/K;C)$ as follows: 
\begin{align}
     S(L/K;C):=\left\{I \subset \mathcal{O}_K:\;I\text{ is salient and }\left[\frac{L/K}{\mathfrak{p}_1}\right]=C,\;\text{where $\mathfrak{p}_1$ is unramified}\right\}. \nonumber
\end{align}
We now recall the definition of the generalized M\"obius function. For a number field $K$, we denote $\mu_K$ to be the generalized M\"obius function which is defined, for an ideal $I \subset \mathcal{O}_K$, as
\begin{align}
     \mu_K(I) = \begin{cases}
        1,\quad \text{when $I=\mathcal{O}_K$}, \\
        0,\quad \text{when $I \subset \mathfrak{p}^2$ for some prime ideal $\mathfrak{p}$}, \\
        (-1)^k,\quad \text{when $n=\mathfrak{p}_1\mathfrak{p}_2...\mathfrak{p}_k$, where $\mathfrak{p}_i$'s are distinct prime ideals.}
    \end{cases} \nonumber
\end{align}
The following is \textit{Lemma 2.1} in [SW18]\footnote[4]{we can view the identity to be the first order duality between prime ideals once we have stated the general case}:
\begin{lemma}
    Let $f(I)$ be the indicator function of $S(L/K;C)$. Then for ideals $I$, we have
    \begin{align}
        \sum_{J \supset I}\mu_K(J)f(J) = -Q_C(I). \nonumber 
    \end{align}
\end{lemma}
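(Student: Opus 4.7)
The plan is to exploit the fact that $\mu_K$ vanishes on non-squarefree ideals, reducing the sum to a signed count over subsets of the distinct prime divisors of $I$, and then to collapse this sum by grouping according to the would-be smallest-norm prime.

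First, I would enumerate the distinct prime ideal divisors of $I$ as $\mathfrak{q}_1,\dots,\mathfrak{q}_m$. Since $\mu_K(J)=0$ whenever $J$ is not squarefree, and since $J\supset I$ forces every prime factor of $J$ to divide $I$, the only contributing $J$'s are the squarefree products $J_S=\prod_{\mathfrak{q}\in S}\mathfrak{q}$ for subsets $S\subseteq\{\mathfrak{q}_1,\dots,\mathfrak{q}_m\}$, with $\mu_K(J_S)=(-1)^{|S|}$. The empty set gives $J=\mathcal{O}_K$, which has no prime divisors and hence is not salient, so $f(\mathcal{O}_K)=0$ and we may restrict to nonempty $S$. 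Thus
\[
\sum_{J\supset I}\mu_K(J)f(J)=\sum_{\substack{\emptyset\ne S\subseteq\{\mathfrak{q}_1,\dots,\mathfrak{q}_m\}\\ J_S\in S(L/K;C)}}(-1)^{|S|}.
\]

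Next I would reindex by the smallest-norm prime of $J_S$. For $J_S$ to lie in $S(L/K;C)$, there must be a unique $\mathfrak{p}\in S$ of minimum norm in $S$, and its Artin symbol must be $C$ (with $\mathfrak{p}$ unramified). Fix a candidate prime $\mathfrak{p}$ among $\{\mathfrak{q}_i\}$ satisfying the Artin condition. The subsets $S$ for which $\mathfrak{p}$ is the unique minimum-norm element are exactly $S=\{\mathfrak{p}\}\cup T$ where $T\subseteq A_{\mathfrak{p}}:=\{\mathfrak{q}_i\supset I:N(\mathfrak{q}_i)>N(\mathfrak{p})\}$. Their contribution is
\[
\sum_{T\subseteq A_{\mathfrak{p}}}(-1)^{|T|+1}=-\sum_{T\subseteq A_{\mathfrak{p}}}(-1)^{|T|},
\]
which by the standard alternating-subset identity equals $-1$ if $A_{\mathfrak{p}}=\emptyset$ and $0$ otherwise.

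The condition $A_{\mathfrak{p}}=\emptyset$ says exactly that no prime divisor of $I$ has norm strictly larger than $N(\mathfrak{p})$, i.e.\ $N(\mathfrak{p})=M(I)$. Summing the $-1$ contributions over all such $\mathfrak{p}$ with Artin symbol $C$ yields precisely $-Q_C(I)$, which is the desired identity. The only step that demands any care is the bookkeeping for the alternating sum in the presence of possible ties among the norms of the $\mathfrak{q}_i$'s — the uniqueness condition in the definition of salient ideals is what makes the smallest-norm prime well defined and the cancellation clean, and this is where I would write the argument most carefully.
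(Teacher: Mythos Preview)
Your argument is correct. The paper itself does not supply a proof of this lemma (it is quoted as \textit{Lemma 2.1} of [SW18]), so there is no in-paper proof to compare against directly. That said, your subset-by-smallest-prime decomposition is essentially the $k=1$ specialization of the method the paper uses for the higher-order dualities (Theorems~2.3 and~2.4): there the authors group $J$ according to the norm-level $I_j$ containing its smallest prime factor and then kill the inner sum over $J'\supset I_{j+1}\cdots I_s$ via $\sum_{J'}\mu_K(J')=0$ unless $j=s$, which leaves only primes of maximal norm, exactly matching your conclusion $A_{\mathfrak p}=\emptyset \Longleftrightarrow N(\mathfrak p)=M(I)$. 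Your direct enumeration over individual primes $\mathfrak p$ (rather than norm-levels $I_j$) is a bit cleaner for $k=1$ and handles norm ties transparently via the salience condition; the paper's norm-level grouping is the organization that extends smoothly to arbitrary $k$.
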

\noindent Note that as per our definitions, $Q_C(I)$ and $Q_C^1(I)$ denote the same set.
We further use $\omega_K(I)$ to denote the prime ideal counting function 
\begin{align}
    \omega_K(I) = \sum_{\mathfrak{p} \supset I} 1 .\nonumber 
\end{align}
We first state and prove what we view as the second order duality between prime ideals, i.e., duality for $k=2$. 
\begin{theorem}
    Let $K$ be a number field and $I \subset \mathcal{O}_K$ be an ideal. Let $f$ be the characteristic function of the set $S(L/K;C)$. Then, for ideals satisfying $Q^1(I)=1$\footnote[5]{this means that such ideals have a unique prime ideal factor with the maximum norm}, the following identity holds:
    \begin{align}
         \sum_{J \supset I} \mu_K(J)(\omega_K(J)-1)f(J) = Q_C^2(I). \nonumber
    \end{align}
\end{theorem}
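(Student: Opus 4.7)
The plan is to prove the identity by a direct combinatorial expansion, mirroring Alladi's proof of the classical second-order duality (the $k=2$ case of Lemma 2.1). Since $\mu_K(J) = 0$ on non-squarefree ideals, only squarefree $J \supset I$ contribute. Such a $J$ is precisely a product of distinct prime divisors of $I$, so I would parametrize the sum by subsets of the set of prime divisors of $I$.

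To track the smallest-norm prime of $J$, I would group the prime divisors of $I$ by norm. Let $n_1 < n_2 < \cdots < n_r$ denote the distinct norms of primes $\mathfrak{p} \supset I$, and set
$$ A_i := \{\mathfrak{p} \supset I : N(\mathfrak{p}) = n_i\}, \quad a_i := |A_i|, \quad b_i := \#\Bigl\{\mathfrak{p} \in A_i : \Bigl[\tfrac{L/K}{\mathfrak{p}}\Bigr] = C\Bigr\}. $$
Each squarefree $J \supset I$ corresponds to a tuple $(S_1,\ldots,S_r)$ with $S_i \subseteq A_i$, via $J = \prod_i\prod_{\mathfrak{p}\in S_i}\mathfrak{p}$, giving $\mu_K(J) = (-1)^{|S|}$ and $\omega_K(J) = |S|$ where $|S| := \sum_i|S_i|$. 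The condition $f(J) = 1$ translates to: there is a unique index $i^*$ with $S_i = \emptyset$ for $i < i^*$, $|S_{i^*}| = 1$, and the chosen element of $S_{i^*}$ has Artin symbol $C$; the sets $S_j$ for $j > i^*$ are otherwise unrestricted.

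Assembling these pieces, the LHS becomes
$$ \sum_{i^*=1}^{r} b_{i^*} \sum_{\substack{S_j \subseteq A_j \\ j > i^*}} (-1)^{1+\sum_{j>i^*}|S_j|}\sum_{j>i^*}|S_j| \;=\; -\sum_{i^*=1}^{r} b_{i^*} \sum_{t=0}^{M_{i^*}}(-1)^t\,t\,\binom{M_{i^*}}{t}, $$
where $M_{i^*} := a_{i^*+1} + \cdots + a_r$ is the number of prime divisors of $I$ with norm strictly greater than $n_{i^*}$. Differentiating $(1+x)^M$ and evaluating at $x = -1$ yields the standard identity $\sum_{t=0}^{M}(-1)^t\,t\,\binom{M}{t} = 0$ for $M \neq 1$, while the sum equals $-1$ for $M = 1$. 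Hence only indices $i^*$ with $M_{i^*} = 1$ contribute.

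Since every $a_j \geq 1$, we have $M_{i^*} = 1$ precisely when $i^* = r-1$ and $a_r = 1$. The hypothesis $Q^1(I) = 1$ is exactly the statement $a_r = 1$, so the unique surviving term yields $\text{LHS} = b_{r-1}$. Under this hypothesis, $M_2(I) = n_{r-1}$ and $b_{r-1}$ is exactly $Q_C^2(I)$ by definition, completing the identity. The main obstacle is not analytic but combinatorial: one must carefully encode the ``unique smallest-norm prime'' condition built into $f$ in terms of the subsets $S_i$, and observe that the $Q^1(I) = 1$ hypothesis is precisely the ingredient that singles out a single non-vanishing contribution from the alternating binomial sum at the very end.
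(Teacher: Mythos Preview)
Your proposal is correct and follows essentially the same approach as the paper: both proofs group the squarefree divisors $J$ of $I$ according to the norm-block containing the smallest-norm prime of $J$, reduce the inner sum over higher blocks to the binomial identity $\sum_t(-1)^t t\binom{M}{t}$ (vanishing unless $M=1$), and then invoke the hypothesis $Q^1(I)=1$ (i.e.\ $a_r=1$) to isolate the single surviving term $b_{r-1}=Q_C^2(I)$. Your write-up is in fact slightly more streamlined, since by encoding $f(J)=1\Rightarrow|S_{i^*}|=1$ from the outset you bypass the paper's intermediate split $\omega_K(JJ')-1=(\omega_K(J)-1)+\omega_K(J')$ into $\mathcal{S}_1+\mathcal{S}_2$.
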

\begin{proof}
    We start with a fixed ideal $I$ satisfying the required hypothesis, such that $I = \mathfrak{p}_1^{e_1}\mathfrak{p}_2^{e_2}...\mathfrak{p}_r^{e_r}$. Without the loss of generality, we assume that in the factorization above, $N(\mathfrak{p}_i)\leq N(\mathfrak{p}_{i+1})$, that is, we factor $I$ into prime ideals in an ascending order of their norms. We rewrite the factorization as $I=I_1I_2...I_s$, where each of these $I_j$'s is a product of the respective prime ideals of the same norm. For example, if $\mathfrak{p}_{i_1}^{e_{i_1}}\mathfrak{p}_{i_2}^{e_{i_2}} \supset I_i$, for some integers $i,i_1,i_2$, then $N(\mathfrak{p}_{i_1})=N(\mathfrak{p}_{i_2})$. Therefore, 
    \begin{align}
        &\sum_{J \supset I} \mu_K(J)(\omega_K(J)-1)f(J) \nonumber \\
        =& \sum_{j=1}^{s-1} \sum_{\substack{J \supset I_j \\ J \not \supset I_{k'},\;\forall k'\geq j+1 \\ J \neq \mathcal{O}_K}}\mu_K(J)f(J)\sum_{J' \supset I_{j+1}...I_s} \mu_K(J')(\omega_K(JJ')-1) \nonumber  \\ 
        =& \sum_{j=1}^{s-1} \sum_{\substack{J \supset I_j \\ J \not \supset I_{k'},\;\forall k'\geq j+1 \\ J \neq \mathcal{O}_K}}\mu_K(J)f(J)\sum_{J' \supset I_{j+1}...I_s} \mu_K(J')(\omega_K(J)-1+\omega_K(J')).
    \end{align}
    We note that in the above equality, the outer sum on the right ranges to $s-1$, since the contribution from the ideal $I_s$ is 0. Also, we do not need the term $f(J')$ as by the definition of $f$, we only need to focus on the ideal with the smallest norm, which, of course, contains $J$ and not $J'$. Therefore, continuing from (2.1), we have
\begin{align}
    \sum_{J \supset I} \mu_K(J)(\omega_K(J)-1)&f(J) = \sum_{j=1}^{s-1} \sum_{\substack{J \supset I_j \\ J \not \supset I_{k'},\;\forall k'\geq j+1 \\ J \neq \mathcal{O}_K}}\mu_K(J)f(J)(\omega_K(J)-1)\sum_{J' \supset I_{j+1}...I_s} \mu_K(J') \nonumber \\
    &+ \sum_{j=1}^{s-1} \sum_{\substack{J \supset I_j \\ J \not \supset I_{k'},\;\forall k'\geq j+1 \\ J \neq \mathcal{O}_K}}\mu_K(J)f(J)\sum_{J' \supset I_{j+1}...I_s} \mu_K(J')\omega_K(J') =:\mathcal{S}_1+\mathcal{S}_2.
\end{align}
We now evaluate both the summations $\mathcal{S}_1$ and $\mathcal{S}_2$ above in (2.2). Starting with the inner sum of $\mathcal{S}_1$, we have that
\begin{align}
    \sum_{J' \supset I_{j+1}...I_s}\mu_K(J') = \sum_{i=0}^t (-1)^i{t \choose i},
\end{align}
where we assume that there are $t$-many distinct prime ideals containing the product of ideals $I_{j+1}...I_s$. 
On the other hand, we know that 
\begin{align}
    (1-x)^t = \sum_{i=0}^t (-1)^i{t \choose i} x^i.
\end{align}
Hence, the RHS of (2.4) with $x=1$ yields the RHS of (2.3), and therefore, we have from both equations that
\begin{align}
     \sum_{J' \supset I_{j+1}...I_s}\mu_K(J') = \begin{cases}
         1,\;\text{when $I_{j+1}...I_s = \mathcal{O}_k$} \\
         0,\;\text{otherwise.}
     \end{cases}
\end{align}
From (2.5), it is clear that the only contribution from $\mathcal{S}_1$ is when $I_{j+1}...I_s = \mathcal{O}_K$, which is again only possible if $j=s$. But that is not possible since $j$ ranges from $1$ to $s-1$, and hence, from (2.5), we conclude that 
\begin{align}
    \mathcal{S}_1 = 0.
\end{align}
Looking at the inner sum of $\mathcal{S}_2$, we observe that
\begin{align}
    \sum_{J' \supset I_{j+1}...I_s} \mu_K(J')\omega_K(J') = \sum_{i=1}^{t}(-1)^i{t \choose i}i.
\end{align}
On the other hand, if we take the derivative on both sides of (2.4), we get that
\begin{align}
    -t(1-x)^{t-1} = \sum_{i=1}^{t}(-1)^i{t \choose i}ix^{i-1}.
\end{align}
Hence, again putting $x=1$ on the RHS of (2.8) yields the RHS of (2.7), and therefore, from equating the left hand sides of (2.7) and (2.8) for $x=1$, we get that 
\begin{align}
      \sum_{J' \supset I_{j+1}...I_s} \mu_K(J')\omega_K(J') = \begin{cases}
          -1,\;\text{when $I_{j+1}...I_s$ is a prime ideal or a prime power} \\
          0,\;\text{otherwise.}
      \end{cases}
\end{align}
Therefore, the only contribution from $\mathcal{S}_2$ is when $j=s-1$ and that $I_s$ is either just a prime ideal, or its power. Note that by our construction, $I_s$ is the product of the prime ideals containing the ideal $I$ with the largest norm and hence, by our hypothesis, $I_s$ is indeed a prime ideal or a prime power. Therefore, we have from (2.2) and (2.9) that
\begin{align}
    \mathcal{S}_2 = -\sum_{\substack{J \supset I_{s-1} \\ J \not \supset I_s \\ J \neq \mathcal{O}_K}} \mu_K(J)f(J) = Q_C^2(I).
\end{align}
To understand the last equality, we observe that $I_{s-1}$ is a product of prime ideals containing $I$ with the second largest norm. Now, the only positive contribution from the terms in the sum in (2.10) is when we choose $J$ to just be the prime ideals, since:
\begin{itemize}
    \item[(i)] we cannot choose $J$ contained in powers of prime ideals as then $\mu_K(J)=0$, and
    \item[(ii)] we cannot choose $J$ contained in two distinct prime ideals, as then $f(J)=0$\footnote[6]{note that every such $J$ is contained in multiple prime ideals of the smallest norm}. 
\end{itemize}
Thus, along with a negative sign, the sum counts the number of prime ideals containing the ideal $I$ with the second largest norm. Hence, combining (2.2), (2.6), and (2.10), we conclude our theorem.
\end{proof}
\noindent Therefore, we have what we call the second order duality between prime ideals in a number ring. We note that the RHS of \textit{Lemma 2.2} counts the number of prime ideals with the largest norm and that of \textit{Theorem 2.3} counts the number of prime ideals with the second largest norm. Hence, we call them first and second order dualities, respectively.  Let us now state and prove the general higher order dualities between prime ideals. The proof will follow an idea similar to that used in the proof of \textit{Theorem 2.3}.
\begin{theorem}
    Let $K$ be a number field and $k$ be a positive integer. Let $I \subset \mathcal{O}_K$ be an ideal such that $Q^i(I)=1$, for $1 \leq i\leq k-1$. Let $f$ be the characteristic function of the set $S(L/K;C)$. Then the following identity holds:
    \begin{align}
        \sum_{J \supset I}\mu_K(J){{\omega_K(J)-1} \choose k-1}f(J) = (-1)^kQ_C^k(I) .\nonumber
    \end{align}
\end{theorem}
\begin{proof}
    A priori, we must note that this theorem only makes sense when the chosen ideal $I$ has prime factors with at least $k$ distinct norms.
    We now start with our usual factorization of $I$ as done above: $I = I_1...I_s$ where the $I_i$'s are products of prime ideals, containing $I$, of the same norm. Without the loss of generality, let us assume that the factors are written in an ascending order of their norms, i.e. $N(I_i) < N(I_j)$, for all $i < j$. From the LHS of the above equation, we have
    \begin{align}
         &\sum_{J \supset I}\mu_K(J){{\omega_K(J)-1} \choose k-1}f(J) \nonumber \\ =& \frac{1}{(k-1)!} \sum_{J \supset I}\mu_K(J)(\omega_K(J)-1)...(\omega_K(J)-k+1)f(J) \nonumber \\
         =&\frac{1}{(k-1)! } \sum_{j=1}^{s-k+1} \sum_{\substack{J \supset I_j \\ J \not \supset I_{k'} \forall k' \geq j+1 \\ J \neq \mathcal{O}_K}} \mu_K(J)f(J)\sum_{J'\supset I_{j+1}...I_s} \mu_K(J')(\omega_K(J)+\omega_K(J')-1)...(\omega_K(J)+\omega_K(J')-k+1).
    \end{align}
    We note that the summand of the innermost sum in (2.11)  can be represented as 
    \begin{align}
        \sum_{u=0}^{k-1}\mu_K(J')c_u(\omega_K(J))\omega_K(J')^u.
    \end{align}
where $c_u(\omega_K(J))$ are coefficients of the polynomial in $\omega_K(J')$. Since the sums are finite, we can do an interchange of the summations to get
\begin{align}
    \sum_{J'\supset I_{j+1}...I_s} \mu_K(J')(\omega_K(J)+\omega_K(J')-1)...(\omega_K(J)+\omega_K(J')-k+1) = \sum_{u=0}^{k-1}c_u(\omega_K(J))\sum_{J'\supset I_{j+1}...I_s}\mu_K(J')\omega_K(J')^u .
\end{align}
Now we note that $j$ ranges from $1$ to $s-k+1$. For $0 \leq u \leq k-1$, we can write the inner sum in the RHS of (2.13) as 
\begin{align}
    \sum_{J'\supset I_{j+1}...I_s}\mu_K(J')\omega_K(J')^u = \sum_{i=1}^{t}(-1)^i{t \choose i}i^u .
\end{align}
On the other hand, looking at the binomial expansion
\begin{align}
    (1-x)^t = \sum_{i=0}^t(-1)^i{t\choose i}x^i,
\end{align}
we take derivative with respect to $x$ and multiply with $x$ on both sides of (2.15) to get
\begin{align}
    -tx(1-x)^{t-1} = \sum_{i=1}^{t}(-1)^i{t \choose i}ix^i.
\end{align}
Repeating the same step again, we see 
\begin{align}
    -tx(1-x)^{t-1}+t(t-1)x^2(1-x)^{t-2} = \sum_{i=1}^{t}(-1)^i{t \choose i}i^2x^i.
\end{align}
We continue the same for $u$ times and get
\begin{align}
    \sum_{\ell=1}^{u}\Tilde{c}_{\ell}(t)x^{\ell}(1-x)^{t-\ell} = \sum_{i=1}^{t} (-1)^i{t \choose i}i^ux^i.
\end{align}
With $x=1$, the RHS of (2.18) is exactly the RHS of (2.14), while the LHS of (2.18) with $x=1$ is non-zero only when $\ell = t$. That is only possible if $t\leq u$. Now, $t$ is the number of distinct prime ideals containing the product $I_{j+1}...I_s$, where $j \leq s-k+1$. Hence, by our hypothesis, $k-1 \leq t$. But $u \leq k-1$, and therefore, the only non-zero contribution occurs on the RHS of (2.13) when $u=k-1$. In other words, we can also say that for $0 \leq u \leq k-2$, we have that 
\begin{align}
    \sum_{J'\supset I_{j+1}...I_s}\mu_K(J')\omega_K(J')^u  = 0 .
\end{align}
Thus, from (2.11), (2.13), and (2.19), we continue to get
\begin{align}
     &\sum_{J \supset I}\mu_K(J){{\omega_K(J)-1} \choose k-1}f(J) \nonumber \\ =& \frac{1}{(k-1)! } \sum_{j=1}^{s-k+1} \sum_{\substack{J \supset I_j \\ J \not \supset I_{k'} \forall k' \geq j+1 \\ J \neq \mathcal{O}_K}} \mu_K(J)f(J)  \sum_{J'\supset I_{j+1}...I_s}\mu_K(J')\omega_K(J')^{k-1}.
\end{align}
Using (2.19), we can again rewrite (2.20) as
\begin{align}
     &\sum_{J \supset I}\mu_K(J){{\omega_K(J)-1} \choose k-1}f(J) \nonumber \\ =& \frac{1}{(k-1)! } \sum_{j=1}^{s-k+1} \sum_{\substack{J \supset I_j \\ J \not \supset I_{k'} \forall k' \geq j+1 \\ J \neq \mathcal{O}_K}} \mu_K(J)f(J)  \sum_{J'\supset I_{j+1}...I_s}\mu_K(J') \omega_K(J')(\omega_K(J')-1)...(\omega_K(J')-k+2).
\end{align}
Again, the inner sum of (2.21) can be written as
\begin{align}
    & \sum_{J'\supset I_{j+1}...I_s}\mu_K(J') \omega_K(J')(\omega_K(J')-1)...(\omega_K(J')-k+2) \nonumber \\
    =& \sum_{i=k-1}^t (-1)^i {t \choose i}i(i-1)...(i-k+2) = \left(\frac{d^k}{dx^k}\left[\sum_{i=0}^t(-1)^i{t \choose i}x^i\right]\right)\Bigg|_{x=1} = \left(\frac{d^k}{dx^k}(1-x)^t\right)\Bigg|_{x=1} \nonumber \\
    =& \begin{cases}
        (-1)^{k-1}(k-1)!,\;\text{when $I_{j+1}...I_s$ has exactly $k-1$ distinct prime factors} \\
        0,\;\text{otherwise.}
    \end{cases}
\end{align}
Therefore, by our hypothesis, for the LHS of (2.22) to give a non-zero contribution, $j$ can only take the value of $s-k+1$. Therefore, from (2.21) and (2.22), we get 
\begin{align}
     \sum_{J \supset I}\mu_K(J){{\omega_K(J)-1} \choose k-1}f(J) = \frac{(-1)^{k-1}(k-1)!}{(k-1)!} \sum_{\substack{J \supset I_{s-k+1} \\ J \not \supset I_k' \forall k' \geq s-k\\ J \neq \mathcal{O}_K }} \mu_K(J)f(J) = (-1)^kQ_C^k(I).
\end{align}
This proves our general duality lemma for any positive integer $k$ and for ideals satisfying some conditions in any arbitrary number ring $K$. 
\end{proof}

\begin{remark}
    We observe that plugging $k=1$ and $k=2$ in \textit{Theorem 2.4} gives us \textit{Lemma 2.2} and \textit{Theorem 2.3} respectively. Therefore, they both are consistent with our general duality identity.
\end{remark}

\noindent In the following section, we will prove estimates of functions counting prime ideals that will be essential for us in proving our main results that will follow later, where the above duality identities will be used predominantly.

\section{The Prime Ideal Counting Functions and their Estimates}
Let us recall the Dickman function [De61] $\rho(\beta)$ defined to be the continuous solution to the following system of equations:
\begin{align}
    \rho(\beta) &= 1,\;\text{for $0 \leq \beta \leq 1$}, \nonumber \\
    -\beta\rho'(\beta) &= \rho(\beta -1),\;\text{for $\beta>1$} .\nonumber
\end{align}
de Bruijn [Br51] and Hildebrand [Hi86] used the Dickman function to estimate a well-known prime counting function defined as
\begin{align}
    \Psi(x,y) = \sum_{\substack{x \leq n \\ P_1(n)\leq y}}1.
\end{align}
Tenenbaum [Te00] has also proved several estimates involving the $\Psi(x,y)$ function and the $k^{th}$ largest prime factors, which proved to be very useful in proving results in the classical case. But here, as we shift from the classical to an algebraic setting, we need to consider the algebraic analogues of such functions and their corresponding estimates. We count, what are known as \textit{smooth ideals}, using the following function: for a number field $K$, 
\begin{align}
    \Psi_K(X,Y) = \sum_{\substack{N(I) \leq X \\ M_1(I) \leq Y}} 1.
\end{align}
counts the number of ideals in $\mathcal{O}_K$ with their norm bounded by $X$ such that the norm of their prime factors is bounded by $Y$. It has been proved by Krause [Kr90] and Moree [Mo92] that an asymptotic estimate for the function $\Psi_K$ exist and is given by
\begin{align}
    \Psi_K(X,Y) =X\rho(\beta) \left(1+O_{\varepsilon}\left(\frac{\beta\log(\beta+1)}{\log X}\right)\right),
\end{align}
where $\beta = \frac{\log X}{\log Y}$. Note that this asymptote is uniform for $1 \leq \beta \leq (\log X)^{1-\varepsilon}$. (3.3) also corresponds to Hildebrand's result [Hi86] in the classical case, i.e. when $K=\mathbb{Q}$. The implicit constant in the error of (3.3) depends on $\varepsilon$, which can be suitably chosen to derive desired estimates. As done in the classical case by Maier [Ma(up)] as a corollary to Hildebrand's result, in the algebraic setting too, one can use the following upper bound of the Dickman function (see Norton [No71])
\begin{align}
    \rho(\beta) \leq \frac{1}{\Gamma(\beta+1)} \nonumber
\end{align}
to derive the asymptotic
\begin{align}
    \rho(\beta) \sim \frac{1}{\sqrt{2\pi\beta}}e^{-\beta\log\left(\frac{\beta}{e}\right)},
\end{align}
and then, use (3.4) to get another useful estimate of $\Psi_K(X,Y)$, i.e.
\begin{align}
    \Psi_K(X,Y) \ll Xe^{-c_3\beta},
\end{align}
for some constant $c_3$. We will use both the estimates (3.3) and (3.5) in different contexts in our proofs. As mentioned earlier, our aim is to derive new formulas of the Chebotarev Density using higher order dualities between prime ideals as proved above. In this paper, we will mainly focus on using the second order duality\footnote[7]{can be viewed as generalizations of results by Alladi-Johnson [AJ24] and the author [Se25]} and comment on how the higher order dualities can be utilized following in the lines of the proofs by Alladi-Sengupta [AS(ip)]. In what follows, we will prove a few important lemmas in this section that we will use to prove our main results. \\ \\
Looking at the hypothesis of \textit{Theorem 2.3}, we observe that it is required to consider only such ideals that satisfy the condition $Q^1(I)=1$. We now prove that the number of ideals that do not satisfy such a condition is small compared to a bound on the norm of the ideals. In fact, in the lemma that follows, we prove something even stronger.
\begin{lemma}
    Let $\mathcal{S}_1(K;X)$ be a set of ideals in $\mathcal{O}_K$ defined as
    \begin{align}
        \mathcal{S}_1(K;X): = \{I \subset \mathcal{O}_K:\;M_1(I)^2|N(I);\;N(I)\leq X\}.
    \end{align}
    Denoting $\mathcal{N}_1(K;X) = \#\mathcal{S}_1(K;X)$, we have
    \begin{align}
        \mathcal{N}_1(K;X) \ll \frac{X}{e^{c_3\sqrt{\log X\log\log X}}}.
    \end{align}
\end{lemma}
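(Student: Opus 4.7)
The plan is to split $\mathcal{S}_1(K;X)$ according to the size of $M_1(I)$, using a threshold $Y = Y(X)$ that I will optimize at the end. Let $\mathcal{A}_Y := \{I \in \mathcal{S}_1(K;X) : M_1(I) \leq Y\}$ and $\mathcal{B}_Y := \{I \in \mathcal{S}_1(K;X) : M_1(I) > Y\}$. Every $I \in \mathcal{A}_Y$ is $Y$-smooth in the sense of (3.2), so trivially $\#\mathcal{A}_Y \leq \Psi_K(X,Y)$.

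For $I \in \mathcal{B}_Y$, the defining condition $M_1(I)^2 \mid N(I)$ forces the prime factorization of $I$ to contain two (not necessarily distinct) prime ideals $\mathfrak{p}, \mathfrak{q}$ with $N(\mathfrak{p}) = N(\mathfrak{q}) = M_1(I)$. Factoring these out writes $I = \mathfrak{p}\mathfrak{q} J$, where $J$ is an $M_1(I)$-smooth ideal of norm at most $X/M_1(I)^2$. Since the number of prime ideals of a given norm $m$ is bounded by $[K:\mathbb{Q}]$, summing over $m = M_1(I) > Y$ and using the trivial bound $\Psi_K(X/m^2, m) \leq X/m^2$ gives
\[
\#\mathcal{B}_Y \ll \sum_{m > Y} \Psi_K\!\left(\frac{X}{m^2}, m\right) \ll \sum_{m > Y} \frac{X}{m^2} \ll \frac{X}{Y}.
\]

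I then optimize by choosing $\log Y = \sqrt{\log X \log \log X}$, which makes the second term $X/Y = Xe^{-\sqrt{\log X \log \log X}}$. For the first term I feed $\beta = \log X/\log Y = \sqrt{\log X/\log \log X}$ into the Hildebrand--Krause--Moree asymptotic (3.3) together with the Dickman tail estimate (3.4); a short calculation shows $\beta \log(\beta/e) \sim \tfrac{1}{2}\sqrt{\log X \log \log X}$, whence $\Psi_K(X,Y) \ll Xe^{-c_3\sqrt{\log X \log \log X}}$ for a suitable constant. Adding the two contributions yields the claimed bound.

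The main obstacle is recognizing that the cruder estimate (3.5) by itself delivers only $Xe^{-c_3\sqrt{\log X/\log \log X}}$, which is strictly weaker than the target; one must invoke the Dickman asymptotic (3.4) inside (3.3) to pull out the full $\sqrt{\log X \log \log X}$ factor in the exponent. Once the correct sharpening is in hand, the rest of the argument is standard Rankin-style balancing between the smooth and non-smooth regimes.
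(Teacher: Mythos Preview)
Your decomposition is exactly the paper's: split at a threshold $Y$, bound the $Y$-smooth ideals by $\Psi_K(X,Y)$, and control the remaining ideals by a tail of size $\ll X/Y$. The only substantive difference is the choice of $Y$ and which $\Psi_K$-estimate is invoked: the paper takes $Y=e^{\sqrt{\log X/\log\log X}}$ and appeals to (3.5), whereas you take $Y=e^{\sqrt{\log X\log\log X}}$ and feed the Dickman asymptotic (3.4) into (3.3). Your balancing is in fact the one that actually hits the stated exponent, since with the paper's smaller $Y$ the non-smooth contribution $X/Y=Xe^{-\sqrt{\log X/\log\log X}}$ falls short of the bound announced in (3.7); your remark that (3.5) alone (at your $Y$) would only yield $e^{-c_3\sqrt{\log X/\log\log X}}$ is precisely the diagnosis that forces the sharper input.
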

\begin{proof}
    Using the estimate of $\Psi_K(X,Y)$ in (3.5), with the choice of $Y = e^{\sqrt{\frac{\log X}{\log\log X}}}$, we have that 
    \begin{align}
        \Psi_K\left(X,e^{\sqrt{\frac{\log X}{\log\log X}}}\right) \ll \frac{X}{e^{c_3\sqrt{\log X\log \log X}}}.
    \end{align}
    Now, it only suffices to look at those ideals satisfying $M_1(I)=N(\mathfrak{p})>e^{\sqrt{\frac{\log X}{\log\log X}}}$. The number of such ideals is essentially of the order of the number of ideals with norm less than or equal to $\frac{X}{N(\mathfrak{p})^2}$. Therefore, by \textit{Theorem 1.5}, we have that 
    \begin{align}
        N_1(K;X) &\ll  \Psi_K\left(X,e^{\sqrt{\frac{\log X}{\log\log X}}}\right) + c_K \sum_{N(\mathfrak{p})>e^{\sqrt{\frac{\log X}{\log\log X}}}}\frac{X}{N(\mathfrak{p})^2} \nonumber \\
        &\ll \frac{X}{e^{c_3\sqrt{\log X\log\log X}}} + c_K\sum_{n>e^{\sqrt{\frac{\log X}{\log\log X}}}} \frac{X}{n^2} \nonumber \\
        &\ll \frac{X}{e^{c_3\sqrt{\log X\log\log X}}}.
    \end{align}
This proves our required lemma.
\end{proof}
\noindent We now define another ideal counting function: the function
\begin{align}
    \Psi_{K,2}(X,Y) := \sum_{\substack{N(I)\leq X \\ M_2(I)\leq Y}} 1 
\end{align}
counts the number of ideals in $\mathcal{O}_K$ with norm bounded by $X$, and the second largest norm among all their prime ideal factors is bounded by $Y$. The next lemma will provide us with a crucial estimate of $\Psi_{K,2}(X,Y)$ with $Y$ belonging to some given interval. This estimate is for the number of ideals with all the prime factors, except those with the largest norm, of smaller norms. From the estimate (3.5), it is quite clear that $\Psi_K(X,Y)$ is quite small as compared to $X$, for large $X$. On the other hand, $\Psi_{K,2}(X,Y)$ is not that small. This can be viewed by considering ideals of the form $I=\mathfrak{p}_1\mathfrak{p}_2$, such that $N(\mathfrak{p}_1)=2<N(\mathfrak{p}_2)$. We observe that 
\begin{align}
    \Psi_{K,2}(X,Y) \geq \Psi_{K,2}(X,2), 
\end{align}
and $\Psi_{K,2}(X,2)$ is more than the number of prime ideals with norm no more than $\frac{X}{2}$. Hence, continuing from (3.11), using the asymptotic version of the Prime Ideal Theorem, we have
\begin{align}
     \Psi_{K,2}(X,Y) \gg \frac{X}{\log \frac{X}{2}} .\nonumber
\end{align}
Therefore, it is important to find its estimate, of course, under some restrictions, which is provided by the following lemma.
\begin{lemma}
    For $Y \leq e^{(\log X)^{1-\delta}}$, for some small $\delta >0$, we have
    \begin{align}
        \Psi_{K,2}(X,Y) \ll \frac{X\log Y}{\log X}.
    \end{align}
\end{lemma}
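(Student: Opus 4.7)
The plan is to decompose any ideal $I$ counted by $\Psi_{K,2}(X,Y)$ as $I=K\cdot J$, where $K$ is supported on the primes of $I$ of largest norm $M_1(I)$ and $J$ is the complementary $Y$-smooth part. The condition $M_2(I)\le Y$ forces every prime factor of $J$ to have norm at most $Y$, while $K$ is a product of prime powers all sharing the common norm $M_1(I)$. The dominant contribution comes from $K=\mathfrak{p}$ a single prime ideal of norm strictly greater than $Y$; the remaining cases ($I$ entirely $Y$-smooth, $K$ a proper prime power, or $K$ supported on two or more distinct primes of the same norm) will be disposed of as short tail estimates at the end.

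The principal quantity to estimate is
\[
S \;:=\; \sum_{\substack{\mathfrak{p}\in\mathfrak{P}(K) \\ N(\mathfrak{p})>Y}} \Psi_K\!\left(X/N(\mathfrak{p}),\,Y\right).
\]
The trivial bound $\Psi_K(Z,Y)\le c_K Z$ together with a Mertens-type estimate $\sum_{Y<N(\mathfrak{p})\le X}1/N(\mathfrak{p})\ll \log\beta$, with $\beta=\log X/\log Y$, yields only $S\ll X\log\beta$, which is too weak; instead I would exploit smoothness via the Krause--Moree asymptotic (3.3), $\Psi_K(Z,Y)\sim Z\rho(\log Z/\log Y)$. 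Partial summation against Theorem 1.3 converts $S$ into $\int_Y^X \Psi_K(X/t,Y)\,dt/\log t$. Splitting at $t=X/Y$ handles the range $N(\mathfrak{p})\ge X/Y$ by the trivial ideal count of Theorem 1.5, producing a term $\sim X\log(\beta/(\beta-1))\sim X/\beta$; and after substituting $u=\log(X/t)/\log Y$, the complementary range becomes
\[
X\int_1^{\beta-1} \frac{\rho(u)}{\beta-u}\,du.
\]

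The main technical step is bounding this integral by $O(1/\beta)$, which I would do by splitting at $u=\beta/2$. On $[1,\beta/2]$ one has $\beta-u\ge\beta/2$, so the contribution is at most $(2/\beta)\int_0^\infty \rho(u)\,du=O(1/\beta)$; here the convergence of $\int_0^\infty\rho$ follows from Norton's bound $\rho(u)\le 1/\Gamma(u+1)$. On $[\beta/2,\beta-1]$ the monotonicity of $\rho$ together with the super-exponential decay $\rho(\beta/2)\ll e^{-c_3\beta/2}$ implicit in (3.5) gives a total of at most $\rho(\beta/2)\log(\beta/2)=o(1/\beta)$. Combining these estimates produces $S\ll X\log Y/\log X$, which is the target bound.

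Finally I would dispose of the exceptional cases. The purely $Y$-smooth contribution is $\Psi_K(X,Y)\ll Xe^{-c_3\beta}$ by (3.5), negligible because the hypothesis $Y\le e^{(\log X)^{1-\delta}}$ forces $\beta\ge(\log X)^{\delta}\to\infty$. Contributions from $K=\mathfrak{p}^a$ with $a\ge 2$ are bounded by $X\sum_{N(\mathfrak{p})>Y}N(\mathfrak{p})^{-2}\ll X/(Y\log Y)$ via the Prime Ideal Theorem, and multi-prime-same-norm contributions are similarly controlled using the elementary bound that there are at most $[K:\mathbb{Q}]$ prime ideals of any given norm; both are $o(X\log Y/\log X)$ under the hypothesis. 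The main obstacle is really the integral estimate together with verifying the uniformity of (3.3) in the relevant range of $t$; both hinge on the hypothesis on $Y$, which places $\beta$ in exactly the useful regime.
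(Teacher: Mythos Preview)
Your argument is correct and takes a genuinely different route from the paper. You parametrize by the \emph{largest} prime $\mathfrak{p}$ with $N(\mathfrak{p})>Y$ and treat the remaining factor as a $Y$-smooth ideal, reducing via partial summation and the Prime Ideal Theorem to the Dickman integral $X\int_{0}^{\beta-1}\rho(u)/(\beta-u)\,du$, which you bound by splitting at $u=\beta/2$ and invoking $\int_{0}^{\infty}\rho<\infty$ together with the super-exponential decay of $\rho$. The paper instead parametrizes by the \emph{second-largest} prime $\mathfrak{p}$ with $N(\mathfrak{p})\le Y$, writes $I=\mathfrak{m}\cdot\mathfrak{p}\cdot\mathfrak{q}$ with $\mathfrak{m}$ being $N(\mathfrak{p})$-smooth, sums over the large prime $\mathfrak{q}$ by the Prime Ideal Theorem, handles the $\mathfrak{m}$-sum by a truncation at $T=e^{(\log X)^{1-\delta/2}}$ (Mertens for small $N(\mathfrak{m})$, the crude bound (3.5) for large $N(\mathfrak{m})$), and finally sums over $\mathfrak{p}$ by Mertens. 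Your approach is cleaner and more conceptual but leans on the full Krause--Moree asymptotic (3.3) in a range where its uniformity must be checked; the paper's approach is more elementary, needing only the crude bound (3.5), and it directly produces the slightly stronger inequality (3.27), namely $\sum_{N(\mathfrak{p})\le Y}\Psi_{K,2}(X,\mathfrak{p})\ll X\log Y/\log X$, which is the form actually used downstream. Your bound on $\Psi_{K,2}(X,Y)$ still implies (3.27) up to the harmless factor $[K:\mathbb{Q}]$, since $Q^{2}(I)\le[K:\mathbb{Q}]$.
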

\begin{proof}
    Let $\mathfrak{p} \subset \mathcal{O}_K$ be a prime ideal. We define a new counting function as follows:
    \begin{align}
        \Psi_{K,2}(X,\mathfrak{p}) = \sum_{\substack{N(I) \leq X \\ M_2(I)=N(\mathfrak{p})}}1.
    \end{align}
    Therefore, from (3.10) and (3.13), we see that the following inequality holds:
    \begin{align}
        \Psi_{K,2}(X,Y) \leq \sum_{\substack{\mathfrak{p} \subset \mathcal{O}_K \\ N(\mathfrak{p}) \leq Y}}\Psi_{K,2}(X,\mathfrak{p}).
    \end{align}
    The reason behind the inequality above is that in the sum on the left, we are counting ideals, whereas on the right, we are counting prime ideals with bounded norm, and therefore, there might be over-counting due to the presence of an ideal inside multiple prime ideals of the same norm. Therefore, estimating the sum on the right of (3.14) would suffice for our required result. \\ \\
    Let us first fix a prime ideal $\mathfrak{p}$ and consider an ideal $I \subset \mathcal{O}_K$ such that it can be represented as $I = \mathfrak{m.pq}$, where $M_1(I)=\mathfrak{q}$, $M_2(I)=\mathfrak{p}$ and $M_1(\mathfrak{m}) \leq N(\mathfrak{p})$. Note, we are considering ideals such that $Q^1(I)=1$, where we denote the unique prime ideal with the largest norm as $\mathfrak{q}$, such that $I \not \subset \mathfrak{q}^2$, as by \textit{Lemma 3.1}, the number of the ideals left out is small and will only contribute to the error term. Rewriting $\Psi_{K,2}(X,\mathfrak{p})$ using its definition (3.13), we have
    \begin{align}
          \Psi_{K,2}(X,\mathfrak{p}) = \sum_{\substack{N(\mathfrak{m}) \leq \frac{X}{N(\mathfrak{p})^2} \\ M_1(\mathfrak{m})\leq N(\mathfrak{p})}}\sum_{N(\mathfrak{p})\leq N(\mathfrak{q})\leq \frac{X}{N(\mathfrak{mp})}} 1 + O\left(\frac{X}{e^{c_3\sqrt{\log X\log\log X}}}\right).
    \end{align}
    We use the asymptotic version of the Prime Ideal Theorem to get from (3.15) that
    \begin{align}
          \Psi_{K,2}(X,\mathfrak{p}) \ll \sum_{\substack{N(\mathfrak{m}) \leq \frac{X}{N(\mathfrak{p})^2} \\ M_1(\mathfrak{m})\leq N(\mathfrak{p})}} \frac{X}{N(\mathfrak{mp})\log \left(\frac{X}{N(\mathfrak{mp})}\right)}.
    \end{align}
    Now, let us denote $T=T(X)=e^{(\log X)^{1-\frac{\delta}{2}}}$. Truncating the sum on the RHS of (3.16) at T, we get that
\begin{align}
    \Psi_{K,2}(X,\mathfrak{p}) \ll \sum_{\substack{N(\mathfrak{m}) \leq T \\ M_1(\mathfrak{m})\leq N(\mathfrak{p})}} \frac{X}{N(\mathfrak{mp})\log \left(\frac{X}{N(\mathfrak{mp})}\right)} +  \sum_{\substack{T<N(\mathfrak{m}) \leq \frac{X}{N(\mathfrak{p})^2} \\ M_1(\mathfrak{m})\leq N(\mathfrak{p})}} \frac{X}{N(\mathfrak{mp})\log \left(\frac{X}{N(\mathfrak{mp})}\right)} .
\end{align}
To estimate the first sum on the right, we first note that $\log\left(\frac{X}{N(\mathfrak{mp})}\right) \gg \log X$, since $N(\mathfrak{m})\leq T$ and $N(\mathfrak{p})\leq Y$. Therefore, we have 
\begin{align}
    \sum_{\substack{N(\mathfrak{m}) \leq T \\ M_1(\mathfrak{m})\leq N(\mathfrak{p})}} \frac{X}{N(\mathfrak{mp})\log \left(\frac{X}{N(\mathfrak{mp})}\right)} \ll \frac{X}{N(\mathfrak{p})\log X} \sum_{\substack{N(\mathfrak{m}) \leq T \\ M_1(\mathfrak{m})\leq N(\mathfrak{p})}} \frac{1}{N(\mathfrak{m})} \ll \frac{X}{N(\mathfrak{p})\log X} \prod_{\substack{\mathfrak{t} \subset \mathcal{O}_K\\ \mathfrak{t}\text{ is prime} \\ N(\mathfrak{t}) \leq N(\mathfrak{p})}}\left(1-\frac{1}{\mathfrak{t}}\right)^{-1}.
\end{align}
The inner product on the right can then be estimated using Merten's Theorem for prime ideals [Le23] as follows:
\begin{align}
    \prod_{\substack{\mathfrak{t} \subset \mathcal{O}_K\\ \mathfrak{t}\text{ is prime} \\ N(\mathfrak{t}) \leq N(\mathfrak{p})}}\left(1-\frac{1}{\mathfrak{t}}\right)^{-1}  \ll \log N(\mathfrak{p}).
\end{align}
Hence, from (3.18) and (3.19), we continue to get
\begin{align}
    \sum_{\substack{N(\mathfrak{m}) \leq T \\ M_1(\mathfrak{m})\leq N(\mathfrak{p})}} \frac{X}{N(\mathfrak{mp})\log \left(\frac{X}{N(\mathfrak{mp})}\right)} \ll \frac{X\log N(\mathfrak{p})}{N(\mathfrak{p})\log X}.
\end{align}
To estimate the second sum in the RHS of (3.17), we again observe that $\log N(\mathfrak{p}) \leq \log\left(\frac{X}{N(\mathfrak{mp})}\right)$, since $N(\mathfrak{m}\mathfrak{p}^2) \leq X$. Thus, using this, we 
\begin{align}
    \sum_{\substack{T<N(\mathfrak{m}) \leq \frac{X}{N(\mathfrak{p})^2} \\ M_1(\mathfrak{m})\leq N(\mathfrak{p})}} \frac{X}{N(\mathfrak{mp})\log \left(\frac{X}{N(\mathfrak{mp})}\right)} \ll \frac{X}{N(\mathfrak{p})\log N(\mathfrak{p})}  \sum_{\substack{T<N(\mathfrak{m}) \leq \frac{X}{N(\mathfrak{p})^2} \\ M_1(\mathfrak{m})\leq N(\mathfrak{p})}} \frac{1}{N(\mathfrak{m})}.
\end{align}
We estimate the inner sum on the right of (3.21) and in pursuit of that, we denote $N(\mathfrak{m})=n$. Then rewriting from (3.21), we get 
\begin{align}
     \sum_{\substack{T<N(\mathfrak{m}) \leq \frac{X}{N(\mathfrak{p})^2} \\ M_1(\mathfrak{m})\leq N(\mathfrak{p})}} \frac{1}{N(\mathfrak{m})} = \sum_{T<n\leq \frac{X}{N(p)^2}} \frac{1}{n} \sum_{\substack{\mathfrak{m} \subset \mathcal{O}_K \\ N(\mathfrak{m})=n \\ M_1(\mathfrak{m}) \leq N(\mathfrak{p})}}1 \leq \sum_{T<n \leq \frac{X}{N(\mathfrak{p})^2}}\frac{\Psi_K(n,N(\mathfrak{p}))}{n} \ll \sum_{T<n\leq \frac{X}{N(\mathfrak{p})^2}} e^{-c_3\alpha} ,
\end{align}
where $\alpha =\frac{\log n}{\log N(\mathfrak{p})}$. Now, we take into consideration the constraint that we have, i.e. $N(\mathfrak{p}) \leq Y$, and by hypothesis, $Y \leq e^{(\log X)^{1-\delta}}$. Therefore, we can write
\begin{align}
    \alpha > \frac{\log T}{\log Y} = \frac{\log e^{(\log X)^{1-\frac{\delta}{2}}}}{\log e^{(\log X)^{1-\delta}}} = (\log X)^{\frac{\delta}{2}}.
\end{align}
Thus, from (3.22) and (3.23), we get that
\begin{align}
    \sum_{\substack{T<N(\mathfrak{m}) \leq \frac{X}{N(\mathfrak{p})^2} \\ M_1(\mathfrak{m})\leq N(\mathfrak{p})}} \frac{1}{N(\mathfrak{m})} \ll e^{-c_3(\log X)^{\frac{\delta}{2}}} .
\end{align}
Therefore, combining (3.17), (3.20), (3.21), and (3.24), we finally have that 
\begin{align}
     \Psi_{K,2}(X,\mathfrak{p}) \ll \frac{X\log N(\mathfrak{p})}{N(\mathfrak{p})\log X}+\frac{X}{N(\mathfrak{p})\log N(\mathfrak{p})}e^{-c_3(\log X)^{\frac{\delta }{2}}}.
\end{align}
Therefore, summing all the prime ideals $\mathfrak{p} \subset \mathcal{O}_K$ with norms bounded by $Y$, we have from (3.25)
\begin{align}
    \sum_{\substack{\mathfrak{p} \subset \mathcal{O}_K \\ N(\mathfrak{p}) \leq Y}}\Psi_{K,2}(X,\mathfrak{p}) &\ll  \sum_{\substack{\mathfrak{p} \subset \mathcal{O}_K \\ N(\mathfrak{p}) \leq Y}}\frac{X\log N(\mathfrak{p})}{N(\mathfrak{p})\log X} +  \sum_{\substack{\mathfrak{p} \subset \mathcal{O}_K \\ N(\mathfrak{p}) \leq Y}} \frac{X}{N(\mathfrak{p})\log N(\mathfrak{p})}e^{-c_3(\log X)^{\frac{\delta }{2}}} \nonumber \\ 
   &\ll \frac{X}{\log X}\sum_{\substack{\mathfrak{p} \subset \mathcal{O}_K \\ N(\mathfrak{p}) \leq Y}} \frac{\log N(\mathfrak{p})}{N(\mathfrak{p})}+ \frac{X}{e^{(\log X)^{\frac{\delta}{2}}}} \sum_{\substack{\mathfrak{p} \subset \mathcal{O}_K \\ N(\mathfrak{p}) \leq Y}} \frac{1}{N(\mathfrak{p})\log N(\mathfrak{p})} \nonumber \\
   &\ll \frac{X\log Y}{\log X}.
\end{align}
The last estimate uses a well-known number field version of Merten's Theorem [Le23] in the first sum, while the second sum is convergent and is much smaller in comparison to the first sum. Thus, we prove our lemma. 
\end{proof}
\begin{remark}
    Although we have the estimate for $\Psi_{K,2}(X,Y)$, we will be using the estimate in (3.26), i.e.
    \begin{align}
         \sum_{\substack{\mathfrak{p} \subset \mathcal{O}_K \\ N(\mathfrak{p}) \leq Y}}\Psi_{K,2}(X,\mathfrak{p})
\ll \frac{X\log Y}{\log X}.
    \end{align}
    for our proofs.
\end{remark}
\noindent Therefore, \textit{Lemma 3.2} gives us the useful estimate (3.27) that will come in handy to prove the important results in the next section. Before that, we will need another important lemma that gives us a treatment of the number of ideals in $\mathcal{O}_K$ contained in at least two prime ideals with the second largest norm, i.e., an estimate on the number of ideals satisfying $Q^2(I)\geq 2$. It will be more evident as to why we need this lemma, once we use it in the next section (see equation (4.21) in the proof of \textit{Theorem 4.1}).
\begin{lemma}
    Given $X \geq 2$ and $Y \leq e^{(\log X)^{1-\delta}}$, for some $\delta >0$, we have for some positive constant $c_1$ that
    \begin{align}
        \sum_{\substack{I \subset \mathcal{O}_K \\ 2 \leq N(I) \leq X \\ Q^2(I) \geq 2}}(Q^2(I)-1) \ll \frac{X\log Y}{\log X}+\frac{X\log\log X}{e^{c_1\sqrt{\log Y}}}. \nonumber
    \end{align}
\end{lemma}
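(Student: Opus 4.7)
The plan is to dominate the target sum by one featuring $Q^2(I)$ instead of $Q^2(I)-1$, swap orders of summation to isolate a ``witness'' prime $\mathfrak{p}$ of second-largest norm on the outside, and then split the outer sum according to whether $N(\mathfrak{p}) \leq Y$ or $N(\mathfrak{p}) > Y$. Setting $A_2(I) := \{\mathfrak{p} : I \subset \mathfrak{p},\; N(\mathfrak{p}) = M_2(I)\}$ so that $|A_2(I)| = Q^2(I)$, the trivial bound $Q^2(I)-1 \leq Q^2(I)$ and an interchange of summation yield
\begin{align}
\sum_{\substack{I \subset \mathcal{O}_K \\ 2 \leq N(I) \leq X \\ Q^2(I) \geq 2}}(Q^2(I)-1) \;\leq\; \sum_{\mathfrak{p}\text{ prime}} \#\{I : N(I)\leq X,\; \mathfrak{p}\in A_2(I),\; Q^2(I)\geq 2\}. \nonumber
\end{align}

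For primes with $N(\mathfrak{p}) \leq Y$ I will simply discard the condition $Q^2(I) \geq 2$; since $\mathfrak{p} \in A_2(I)$ already forces $M_2(I) = N(\mathfrak{p})$, the inner count is majorised by $\Psi_{K,2}(X,\mathfrak{p})$. Summing over such $\mathfrak{p}$ and invoking the estimate (3.27) from Remark 3.1 delivers exactly the first term $\frac{X\log Y}{\log X}$. For primes with $N(\mathfrak{p}) > Y$, where Lemma 3.2 is unavailable, I will argue combinatorially: $Q^2(I)\geq 2$ and $\mathfrak{p}\in A_2(I)$ guarantee a prime $\mathfrak{p}'\neq\mathfrak{p}$ with $N(\mathfrak{p}') = N(\mathfrak{p})$ and a prime $\mathfrak{q}$ with $N(\mathfrak{q}) > N(\mathfrak{p})$, both dividing $I$. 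Writing $I = \mathfrak{p}\mathfrak{p}'\mathfrak{q}J$, Theorem 1.5 bounds the number of admissible $J$ by $O(X/(N(\mathfrak{p})^2 N(\mathfrak{q})))$. The sum over $\mathfrak{p}'$ is $O(1)$ (at most $[K:\mathbb{Q}]$ such primes lie above a common rational prime), the sum over $\mathfrak{q}$ contributes $O(\log\log X)$ via Mertens' theorem for number fields [Le23], and the final tail $\sum_{N(\mathfrak{p})>Y} 1/N(\mathfrak{p})^2 \ll 1/(Y\log Y)$ follows from the Prime Ideal Theorem by partial summation. Together these give a contribution of $O(X\log\log X/(Y\log Y))$; since $Y\log Y \geq e^{c_1\sqrt{\log Y}}$ for any fixed $c_1>0$ once $Y$ exceeds a constant, this matches the second term of the bound. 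For $Y$ below that constant the lemma is trivial, as the entire left-hand side is $O(X)$.

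The main obstacle will be the careful bookkeeping of overcounting introduced by the three reductions---passing from $Q^2(I)-1$ to $Q^2(I)$, fixing the single witness $\mathfrak{p}$, and then introducing the auxiliary pair $(\mathfrak{p}',\mathfrak{q})$ to certify both $Q^2(I)\geq 2$ and $M_1(I)>N(\mathfrak{p})$. Each step may inflate the count of a given $I$, and I must verify that each inflation costs at most a bounded or $\log\log X$ factor, precisely what the target bound permits. A secondary technicality is the tail estimate $\sum_{N(\mathfrak{p})>Y}1/N(\mathfrak{p})^2 \ll 1/(Y\log Y)$ via PIT and partial summation, which is routine but must be executed uniformly in the admissible range $Y \leq e^{(\log X)^{1-\delta}}$.
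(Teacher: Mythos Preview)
Your proposal is correct, and for the small-norm range $N(\mathfrak{p})\le Y$ it proceeds exactly as the paper does, via (3.27). For the range $N(\mathfrak{p})>Y$ the two arguments genuinely diverge. The paper does \emph{not} exploit the condition $Q^2(I)\ge 2$ any further: it rewrites the count as a double sum involving $\Psi_K$, interchanges the order of summation with a split at $\sqrt{X}$, replaces the inner prime-ideal sum by an integral, controls the resulting error via the strong Prime Ideal Theorem (this is the source of the term $X\log\log X\,/\,e^{c_1\sqrt{\log Y}}$), and finally bounds the remaining pseudo-integro-sums using the Dickman-type estimate (3.3) for $\Psi_K$, obtaining $X\log\log X\,/\,Y$. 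Your route is shorter and more elementary: by cashing in $Q^2(I)\ge 2$ to produce a second prime $\mathfrak{p}'$ of the same norm, you gain an extra factor $N(\mathfrak{p})$ in the denominator, and the whole large-$\mathfrak{p}$ contribution collapses to $X\log\log X\sum_{N(\mathfrak{p})>Y}N(\mathfrak{p})^{-2}\ll X\log\log X\,/\,Y$, which is already dominated by the stated error. What the paper's longer argument buys is pedagogical: the sum-to-integral replacement (with error controlled by a strong equidistribution theorem) is precisely the mechanism by which the Chebotarev density is injected in the proof of Theorem~4.1, where no comparable combinatorial shortcut exists; Lemma~3.4 is deliberately proved the hard way as a rehearsal of that technique.
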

\begin{proof}
    In this proof, we will use the estimate (3.3) of $\Psi_K(X,Y)$. We start by defining the set $S_2\left(\frac{X}{N(\mathfrak{p})},\mathfrak{p}\right)$ of ideals $I \subset \mathcal{O}_K$ such that $N(I) \leq \frac{X}{N(\mathfrak{p})}$ with a unique and non-repeating prime ideal of the largest norm and $M_2(I)=N(\mathfrak{p})$. Therefore, by the definition of $Q^2(I)$ and using \textit{Lemma 3.1}, we get
    \begin{align}
        \sum_{\substack{I \subset \mathcal{O}_K \\ 2 \leq N(I) \leq X \\ Q^2(I) \geq 2}}(Q^2(I)-1) \leq \sum_{N(\mathfrak{p}) \leq \sqrt{X}} \left|S_2\left(\frac{X}{N(\mathfrak{p})},\mathfrak{p}\right)\right| +  O\left(\frac{X}{e^{c_3\sqrt{\log X\log\log X}}}\right).
    \end{align}
    Note $|\cdot|$ above determines the cardinality function. Therefore, to get our desired result, we need to estimate $\left|S_2\left(\frac{X}{N(\mathfrak{p})},\mathfrak{p}\right)\right|$. To do that, we choose $I \in S_2\left(\frac{X}{N(\mathfrak{p})},\mathfrak{p}\right)$. Then, we can write $I = \mathfrak{m.pq}$ such that $M_1(I)=N(\mathfrak{q})$, $M_2(I)=N(\mathfrak{p})$ and $\mathfrak{m} \subset \mathcal{O}_K$ is an ideal such that $M_1(\mathfrak{m}) \leq N(\mathfrak{p})$. Therefore, using the definition of $\Psi_K(X,Y)$, we have
    \begin{align}
        \left|S_2\left(\frac{X}{N(\mathfrak{p})},\mathfrak{p}\right)\right| &= \sum_{\substack{N(\mathfrak{m}) \leq \frac{X}{N(\mathfrak{p})^3} \\ M_1(\mathfrak{m})\leq N(\mathfrak{p})}}\sum_{\substack{N(\mathfrak{p})<N(\mathfrak{q}) \\ N(\mathfrak{m.pq})\leq \frac{X}{N(\mathfrak{p})}}}1 = \sum_{N(\mathfrak{p})<N(\mathfrak{q})\leq \frac{X}{N(p)^2}}\sum_{\substack{N(\mathfrak{m})\leq \frac{X}{N(\mathfrak{p})^2N(\mathfrak{q})} \\ M_1(\mathfrak{m})\leq N(\mathfrak{p})}} 1 \nonumber \\
    &= \sum_{\substack{\mathfrak{q} \in \mathcal{O}_K \\ \mathfrak{q} \text{ is a prime} \\N(\mathfrak{p})<N(\mathfrak{q})\leq \frac{X}{N(\mathfrak{p})^2}}} \Psi_K\left(\frac{X}{N(\mathfrak{p})^2N(\mathfrak{q})},N(\mathfrak{p})\right).
    \end{align}
    Using (3.27), we have
\begin{align}
    \sum_{N(\mathfrak{p})\leq Y} \left|S_2\left(\frac{X}{N(\mathfrak{p})},\mathfrak{p}\right)\right|\leq   \sum_{N(\mathfrak{p})\leq Y} \Psi_{K,2}\left(\frac{X}{N(\mathfrak{p})},\mathfrak{p}\right) \ll \frac{X\log Y}{\log X}.
\end{align}
Hence, in view of (3.28), we are only required to prove the estimate of the remaining interval of $N(\mathfrak{p})$, i.e.
\begin{align}
    \sum_{Y<N(\mathfrak{p}) \leq\sqrt{X}} \left|S_2\left(\frac{X}{N(\mathfrak{p})},\mathfrak{p}\right)\right| = \sum_{Y < N(\mathfrak{p})\leq \sqrt{X}} \;\sum_{N(\mathfrak{p})<N(\mathfrak{q})\leq \frac{X}{N(\mathfrak{p})^2}} \Psi_K\left(\frac{X}{N(\mathfrak{p})^2N(\mathfrak{q})},N(\mathfrak{p})\right).
\end{align}
We implement a change in the order of summation in the RHS of (3.31), which then involves a split in the double sum. We observe that $N(\mathfrak{p})$ satisfies the following inequalities:
\begin{align}
    Y<N(\mathfrak{p}), \quad N(\mathfrak{p})<N(\mathfrak{q}), \quad N(\mathfrak{p})\leq \frac{X}{N(\mathfrak{p})N(\mathfrak{q})},\quad N(\mathfrak{p}) \leq \sqrt{X} .\nonumber
\end{align}
Therefore, with the change in the order of summations and the split, (3.31) results in
\begin{align}
    \sum_{\substack{Y<N(\mathfrak{p}) \leq \sqrt{X} }} \left|S_2\left(\frac{X}{N(\mathfrak{p})},\mathfrak{p}\right)\right| &= \sum_{\substack{Y <N(\mathfrak{q}) \leq \sqrt{X}}} \;  \sum_{\substack{Y<N(\mathfrak{p})<N(\mathfrak{q}) }} \Psi_K\left(\frac{X}{N(\mathfrak{p})^2N(\mathfrak{q})},N(\mathfrak{p})\right) \nonumber \\ &\hspace{2cm}+ \sum_{\sqrt{X} < N(\mathfrak{q}) \leq \frac{X}{Y^2}} \;\sum_{\substack{Y<N(\mathfrak{p})\leq \sqrt{\frac{X}{N(\mathfrak{q})} } }} \Psi_K\left(\frac{X}{N(\mathfrak{p})^2N(\mathfrak{q})},N(\mathfrak{p})\right) .
\end{align}
We will now use a trick that will not only prove to be very useful here in this proof, but also in the proof of \textit{Theorem 4.1}. This step enables us to use the strong form of the Prime Ideal Theorem (\textit{Theorem 1.3}) to obtain our estimates\footnote[8]{later we will observe that this trick also enables us to involve the strong form of CDT (see \textit{Theorem 4.1})}. Here, we replace the inner sums of both the double sums in the RHS of (3.32) using the following integrals:
\begin{align}
     \sum_{\substack{Y<N(\mathfrak{p})<N(\mathfrak{q}) }} \Psi_K\left(\frac{X}{N(\mathfrak{p})^2N(\mathfrak{q})},N(\mathfrak{p})\right) \quad &\text{by} \quad \int_Y^{N(\mathfrak{q})} \Psi_K\left(\frac{X}{t^2N(\mathfrak{q})},t\right)\frac{dt}{\log t} ,\nonumber \\
     \sum_{\substack{Y<N(\mathfrak{p})<\sqrt{\frac{X}{N(\mathfrak{q})} } }} \Psi_K\left(\frac{X}{N(\mathfrak{p})^2N(\mathfrak{q})},N(\mathfrak{p})\right) \quad &\text{by} \quad \int_Y^{\sqrt{\frac{X}{N(\mathfrak{q})} } } \Psi_K\left(\frac{X}{t^2N(\mathfrak{q})},t\right)\frac{dt}{\log t}. \nonumber
\end{align}
Indeed, such a change will lead to some error terms originating from the differences between the existing and the newly introduced expressions, namely
\begin{align}
    E&: = \sum_{\substack{Y <N(\mathfrak{q}) \leq \sqrt{X}}}\left(   \sum_{\substack{Y<N(\mathfrak{p})<N(\mathfrak{q}) }} \Psi_K \left(\frac{X}{N(\mathfrak{p})^2N(\mathfrak{q})},N(\mathfrak{p})\right) - \int_Y^{N(\mathfrak{q})} \Psi_K\left(\frac{X}{t^2N(\mathfrak{q})},t\right)\frac{dt}{\log t}          \right), \nonumber \\
    \Tilde{E} &:= \sum_{\sqrt{X} < N(\mathfrak{q}) \leq \frac{X}{Y^2}} \left(        \sum_{\substack{Y<N(\mathfrak{p})<\sqrt{\frac{X}{N(\mathfrak{q})} } }} \Psi_K\left(\frac{X}{N(\mathfrak{p})^2N(\mathfrak{q})},N(\mathfrak{p})\right) - \int_Y^{\sqrt{\frac{X}{N(\mathfrak{q})} } } \Psi_K\left(\frac{X}{t^2N(\mathfrak{q})},t\right)\frac{dt}{\log t}           \right) .\nonumber
\end{align}
Estimating $E$, we make some change in the order of summations to derive
\begin{align}
    |E| &= \sum_{\substack{Y <N(\mathfrak{q}) \leq \sqrt{X}}}\left|   \sum_{\substack{Y<N(\mathfrak{p})<N(\mathfrak{q}) }} \Psi_K\left(\frac{X}{N(\mathfrak{p})^2N(\mathfrak{q})},N(\mathfrak{p})\right) - \int_Y^{N(\mathfrak{q})} \Psi_K\left(\frac{X}{t^2N(\mathfrak{q})},t\right)\frac{dt}{\log t}          \right| \nonumber \\
    &=  \sum_{Y<N(\mathfrak{q})\leq \sqrt{X}}\left|\sum_{\substack{Y<N(\mathfrak{p})<N(\mathfrak{q}) }}\; \sum_{\substack{N(\mathfrak{m}) \leq \frac{X}{N(\mathfrak{p})^2N(\mathfrak{q})} \\ M_1(\mathfrak{m})\leq N(\mathfrak{p})}} 1- \int_Y^{N(\mathfrak{q})} \left\{\sum_{\substack{N(\mathfrak{m}) \leq \frac{X}{t^2N(\mathfrak{q})} \\ M_1(\mathfrak{m})\leq t}}1\right\}\frac{dt}{\log t}\right|  \nonumber \\
    & =  \sum_{Y<N(\mathfrak{q})\leq \sqrt{X}} \;\sum_{N(\mathfrak{m}) \leq \frac{X}{Y^2N(\mathfrak{q})}}\left| \sum_{\substack{\max(Y,M_1(\mathfrak{m}))\leq N(\mathfrak{p})\leq  \min\left(N(\mathfrak{q}),\sqrt{\frac{X}{N(\mathfrak{mq})}}\right)            }       }      1    - \int_{\max(Y,M_1(\mathfrak{m}))}^{\min\left(N(\mathfrak{q}),\sqrt{\frac{X}{N(\mathfrak{mq})}}\right)}\frac{dt}{\log t}    \right|.
\end{align}
We observe that the difference inside the absolute value in (3.33) is almost\footnote[9]{we say ``almost" due to the truncated intervals of $N(\mathfrak{p})$} exactly the LHS of the strong form of the Prime Ideal Theorem (\textit{Theorem 1.3}). Note that the error term in \textit{Theorem 1.3}, i.e. $\frac{X}{e^{c_1\sqrt{\log X}}}$ is an increasing function in $X$. Therefore, continuing from (3.33), we can write using \textit{Theorem 1.3} that
\begin{align}
    |E| &\ll \sum_{Y<N(\mathfrak{q})\leq \sqrt{X}} \;\sum_{N(\mathfrak{m}) \leq \frac{X}{Y^2N(\mathfrak{q})}} \sqrt{\frac{X}{N(\mathfrak{m})N(\mathfrak{q})}}\frac{1}{e^{\frac{c_1}{\sqrt{2}}\sqrt{\log\left(\frac{X}{N(\mathfrak{m})N(\mathfrak{q})}\right)}}}\nonumber \\
    &\ll \frac{\sqrt{X}}{e^{c_1\sqrt{\log Y}}} \sum_{Y<N(\mathfrak{q})\leq \sqrt{X}}\frac{1}{\sqrt{N(\mathfrak{q})}} \sum_{N(\mathfrak{m}) \leq \frac{X}{Y^2N(\mathfrak{q})}} \frac{1}{\sqrt{N(\mathfrak{m})}} \nonumber \\
    &\ll \frac{X}{e^{c_1\sqrt{\log Y}}}\sum_{Y < N(\mathfrak{q})\leq \sqrt{X}} \frac{1}{N(\mathfrak{q})} \ll \frac{X\log \log X}{e^{c_1\sqrt{\log Y}}} .
\end{align}
We can treat the error $\Tilde{E}$ in an exactly similar manner. We write
\begin{align}
     |\Tilde{E}| &= \left| \sum_{\sqrt{X}<N(\mathfrak{q})\leq\frac{X}{Y^2}}\left(\sum_{\substack{Y<N(\mathfrak{p})\leq \sqrt{\frac{X}{N(\mathfrak{q})} } }} \Psi_K\left(\frac{X}{N(\mathfrak{p})^2N(\mathfrak{q})},N(\mathfrak{p})\right) - \int_Y^{\sqrt{\frac{X}{N(\mathfrak{q})} } } \Psi_K\left(\frac{X}{t^2N(\mathfrak{q})},t\right)\frac{dt}{\log t}\right)\right|  \nonumber \\
    &= \left| \sum_{Y<N(\mathfrak{q})\leq \frac{X}{Y^2}}\left(\sum_{\substack{Y<N(\mathfrak{p})\leq \sqrt{\frac{X}{N(\mathfrak{q})} } }}\; \sum_{\substack{N(\mathfrak{m}) \leq \frac{X}{N(\mathfrak{p})^2N(\mathfrak{q})} \\ M_1(\mathfrak{m})\leq N(\mathfrak{p})}} 1- \int_Y^{\sqrt{\frac{X}{N(\mathfrak{q})} } } \left\{\sum_{\substack{N(\mathfrak{m}) \leq \frac{X}{t^2N(\mathfrak{q})} \\ M_1(\mathfrak{m})\leq t}}1\right\}\frac{dt}{\log t}\right)\right| \nonumber \\
    &= \sum_{Y<N(\mathfrak{q})\leq \frac{X}{Y^2}} \sum_{N(\mathfrak{m}) \leq \frac{X}{Y^2N(\mathfrak{q})}}\left|           \sum_{\substack{\max(Y,M_1(\mathfrak{m}))\leq N(\mathfrak{p})\leq  \min\left(\sqrt{\frac{X}{N(\mathfrak{q})}},\sqrt{\frac{X}{N(\mathfrak{mq})}}\right) }       }      1    -\int_{\max(Y,M_1(\mathfrak{m}))}^{\min\left(\sqrt{\frac{X}{N(\mathfrak{q})}},\sqrt{\frac{X}{N(\mathfrak{mq})}}\right)}\frac{dt}{\log t}          \right| \nonumber \\
    &\ll \sum_{Y<N(\mathfrak{q})\leq \frac{X}{Y^2}} \sum_{N(\mathfrak{m}) \leq \frac{X}{Y^2N(\mathfrak{q})}} \sqrt{\frac{X}{N(\mathfrak{m})N(\mathfrak{q})}}\frac{1}{e^{\frac{c_1}{\sqrt{2}}\sqrt{\log\left(\frac{X}{N(\mathfrak{m})N(\mathfrak{q})}\right)}}} \nonumber \\
    &\ll \frac{\sqrt{X}}{e^{c_1\sqrt{\log Y}}}\sum_{Y<N(\mathfrak{q})\leq \frac{X}{Y^2}} \frac{1}{\sqrt{N(\mathfrak{q})}} \sum_{N(\mathfrak{m}) \leq \frac{X}{Y^2N(\mathfrak{q})}} \frac{1}{\sqrt{N(\mathfrak{m})}} \nonumber \\
    &\ll \frac{X}{e^{c_1\sqrt{\log Y}}} \sum_{Y<N(\mathfrak{q})\leq \frac{X}{Y^2}}\frac{1}{N(\mathfrak{q})} \ll \frac{X\log\log X}{e^{c_1\sqrt{\log Y}}}.
\end{align}
Therefore, combining (3.32), (3.34), and (3.35), we get that
\begin{align}
     \sum_{\substack{Y<N(\mathfrak{p}) \leq \sqrt{X} }} \left|S_2\left(\frac{X}{N(\mathfrak{p})},\mathfrak{p}\right)\right| = &\sum_{Y<N(\mathfrak{q})\leq \sqrt{X}} \int_Y^{N(\mathfrak{q})} \Psi_K\left(\frac{X}{t^2N(\mathfrak{q})},t\right)\frac{dt}{\log t}\nonumber \\ &+ \sum_{\sqrt{X}<N(\mathfrak{q})\leq \frac{X}{Y^2}} \int_Y^{\sqrt{\frac{X}{N(\mathfrak{q})} } } \Psi_K\left(\frac{X}{t^2N(\mathfrak{q})},t\right)\frac{dt}{\log t} +O\left(\frac{X\log\log X}{e^{c_1\sqrt{\log Y}}}\right).
\end{align}
Now, we need to estimate the pseudo-integrals present in the RHS of (3.36). To do that, we will need the estimate of $\Psi_K(X,Y)$ as given in (3.3). Let us start by looking at the integral
\begin{eqnarray}
    \int_Y^{N(\mathfrak{q})} \Psi_K\left(\frac{X}{t^2N(\mathfrak{q})},t\right)\frac{dt}{\log t} \ll \int_Y^{N(\mathfrak{q})} \frac{X}{t^2N(\mathfrak{q})} e^{-\beta\log\left(\frac{\beta}{e}\right)}\frac{dt}{\log t},   
\end{eqnarray} 
where $\beta = \frac{\log \frac{X}{t^2N(\mathfrak{q})}}{\log t}$. In the integral, we make a substitution of $u = \frac{\log \frac{X}{N(\mathfrak{q})}}{\log t}$. Then,
\begin{align}
     \beta = u-2;\quad t = e^{\frac{\log \frac{X}{N(\mathfrak{q})}}{u}};\quad  du = -\log \frac{X}{N(\mathfrak{q})}\frac{dt}{t(\log t)^2}. \nonumber
\end{align}
Note here that the asymptotic bound is uniform for $Y<t\leq N(\mathfrak{q})$, for $\varepsilon=\frac{2}{3}$. Therefore, continuing from (3.37)
 \begin{align}
     \int_Y^{N(\mathfrak{q})} \Psi_K\left(\frac{X}{t^2N(\mathfrak{q})},t\right)\frac{dt}{\log t} \ll \frac{X}{N(\mathfrak{q})}\int_2^{\frac{\log \frac{X}{N(\mathfrak{q})}}{\log Y}} e^{-\frac{\log \frac{X}{N(\mathfrak{q})}}{u}-(u-2)\log(u-2)}\frac{du}{u} \ll \frac{X}{N(\mathfrak{q})Y}.
 \end{align}
Hence, the pseudo-integro-sum can then be estimated as
\begin{eqnarray}
    \sum_{Y<N(\mathfrak{q})\leq \sqrt{X}} \int_Y^{N(\mathfrak{q})} \Psi_K\left(\frac{X}{t^2N(\mathfrak{q})},t\right)\frac{dt}{\log t} \ll \sum_{Y<N(\mathfrak{q}) \leq \sqrt{X}} \frac{X}{N(\mathfrak{q})Y} \ll \frac{X\log\log X}{Y}.
\end{eqnarray}
For the other inner integral on the right of (3.36), a similar treatment yields
\begin{align}
    \int_Y^{\sqrt{\frac{X}{N(\mathfrak{q})} } } \Psi_K\left(\frac{X}{t^2N(\mathfrak{q})},t\right)\frac{dt}{\log t} \ll \int_{Y}^{\sqrt{\frac{X}{N(\mathfrak{q})}}} \frac{X}{t^2N(\mathfrak{q})}e^{-\beta\log\frac{\beta}{e}}\frac{dt}{\log t}.
\end{align}
Using the exact same substitution as done above, we get from (3.40) that
\begin{align}
    \int_Y^{\sqrt{\frac{X}{N(\mathfrak{q})} } } \Psi_K\left(\frac{X}{t^2N(\mathfrak{q})},t\right)\frac{dt}{\log t} \ll \frac{X}{N(\mathfrak{q})}\int_2^{\frac{\log \frac{X}{N(\mathfrak{q})}}{\log Y}} e^{-\frac{\log \frac{X}{N(\mathfrak{q})}}{u}-(u-2)\log(u-2)}\frac{du}{u} \ll \frac{X}{N(\mathfrak{q})Y}.
\end{align}
Therefore, again, the second pseudo-integro-sum in (3.36) can be estimated as
\begin{align}
     \sum_{\sqrt{X}<N(\mathfrak{q})\leq \frac{X}{Y^2}} \int_Y^{\sqrt{\frac{X}{N(\mathfrak{q})} } } \Psi_K\left(\frac{X}{t^2N(\mathfrak{q})},t\right)\frac{dt}{\log t} \ll \frac{X\log\log X}{Y}.
\end{align}
Therefore, finally combining (3.36), (3.39), and (3.42), we get 
\begin{align}
     \sum_{\substack{Y<N(\mathfrak{p}) \leq \sqrt{X} }} \left|S_2\left(\frac{X}{N(\mathfrak{p})},\mathfrak{p}\right)\right| \ll \frac{X\log\log X}{Y}+\frac{X\log\log X}{e^{c_1\sqrt{\log Y}}} \ll \frac{X\log\log X}{e^{c_1\sqrt{\log Y}}}.
\end{align}
Thus, combining (3.28), (3.30), and (3.43) together yields our required result.   
\end{proof}

\begin{remark}
    Note that we have deliberately kept two different error terms dependent on the choice of $Y$. We will make a suitable choice of $Y$ in the proof of \textit{Theorem 4.1} (see below), where we will use \textit{Lemma 3.4}.
\end{remark}
\noindent We have now stated and proved all the lemmas, and hence have the required machinery that we will need to prove the main results in the next section. It is evident through the proofs that both the estimates of $\Psi_K(X,Y)$ mentioned at the start of this section are important in proving these lemmas. We now proceed to state and prove our major theorems in the next section that will lead us to derive a new formula for the Chebotarev Density, with a special focus on the second order duality between prime ideals.

\section{A New Formula for the Chebotarev Density in arbitrary finite Galois extensions}
We will prove two theorems in this section, leading up to a new formula for the Chebotarev Density. We will be proving the results in a setting of a finite Galois extension $L/K$ such that $G=Gal(L/K)$ is the corresponding Galois group. We fix a conjugacy class $C\subset G$ which will be instrumental in our proofs. As mentioned above, the following results can be viewed as generalizations of the works of Alladi, Dawsey, Sweeting, Woo, Johnson, and the author himself. More precisely, the result that will be derived at the end of this section will prove to be a perfect generalization\footnote[10]{our result generalizes the choice of the Galois extension} to equation (6.3) in [Se25]. \\ \\
Our first theorem in this section is a crucial one for our goal. It enables us to bring in the second order duality (\textit{Theorem 2.3}) and the Chebotarev Density into the picture. This theorem can be treated as the backbone of this whole process to deduce the formula. 
\begin{theorem}
    Let $L/K$ be a finite Galois extension with $G=Gal(L/K)$ and let $C \subset G$ be a fixed conjugacy class. Then for $X\geq 2$, we have
    \begin{align}
         \sum_{2 \leq N(I)\leq X} Q^2_C(I) = c_K\cdot \frac{|C|}{|G|}\cdot X + O\left(\frac{X(\log\log X)^2}{\log X}\right). \nonumber
    \end{align}
    \end{theorem}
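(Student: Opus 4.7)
The strategy is to swap the order of summation, so that for each $\mathfrak{p}\in\mathfrak{P}_C$ we count the ideals $I$ with $N(I)\le X$ for which $\mathfrak{p}$ is a second-largest-norm prime factor of $I$, and then to apply the strong forms of PIT (\textit{Theorem 1.3}) and CDT (\textit{Theorem 1.2}) together with the machinery of \S 3. Interchanging, one has
\begin{align}
\sum_{2\le N(I)\le X}Q^2_C(I)=\sum_{\substack{\mathfrak{p}\in\mathfrak{P}_C\\ N(\mathfrak{p})\le\sqrt{X}}}\#\left\{I:N(I)\le X,\ \mathfrak{p}\mid I,\ M_2(I)=N(\mathfrak{p})\right\}, \nonumber
\end{align}
the cap $N(\mathfrak{p})\le\sqrt{X}$ arising because $I$ must contain a prime $\mathfrak{q}$ with $N(\mathfrak{q})>N(\mathfrak{p})$ and $\mathfrak{p}\mathfrak{q}\mid I$.

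The generic $I$ counted above factors as $I=\mathfrak{m}\,\mathfrak{p}\,\mathfrak{q}$, where $\mathfrak{q}$ is the unique largest-norm prime divisor of $I$ (with $\mathfrak{q}^2\nmid I$), $\mathfrak{p}^2\nmid I$, and $\mathfrak{m}$ satisfies $M_1(\mathfrak{m})\le N(\mathfrak{p})$ with $\mathfrak{p}\nmid\mathfrak{m}$. The non-generic cases --- $\mathfrak{p}^2\mid I$, $\mathfrak{q}^2\mid I$, or $I$ having several distinct primes of largest norm --- will be absorbed into the error via \textit{Lemma 3.1} and \textit{Lemma 3.4}. In the generic case, the inner sum over $\mathfrak{q}$ equals $\pi(K;X/N(\mathfrak{m}\mathfrak{p}))-\pi(K;N(\mathfrak{p}))$, which \textit{Theorem 1.3} lets me replace by $Li(X/N(\mathfrak{m}\mathfrak{p}))-Li(N(\mathfrak{p}))$ modulo an error that is ultimately negligible after the outer sums.

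To sum the resulting expression over $\mathfrak{p}\in\mathfrak{P}_C$, I introduce a cutoff $Y=\exp((\log X)^{1-\delta})$ for some small $\delta>0$ and split the $\mathfrak{p}$-range. For $N(\mathfrak{p})\le Y$, the ``sum-to-integral'' device from the proofs of \textit{Lemma 3.2} and \textit{Lemma 3.4}, combined with \textit{Theorem 1.2}, lets me replace the $\mathfrak{p}$-sum by $\tfrac{|C|}{|G|}$ times an integral against $dt/\log t$, the CDT error being absorbed exactly as in the $E,\tilde E$ estimates of \textit{Lemma 3.4}. The tail $Y<N(\mathfrak{p})\le\sqrt{X}$ is bounded directly by \textit{Lemma 3.4}, the choice of $Y$ being made so that both error pieces of \textit{Lemma 3.4} fit under $X(\log\log X)^2/\log X$. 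After these substitutions, the nested main-term integral over $N(\mathfrak{m})$ and $t=N(\mathfrak{p})$, evaluated with the help of \textit{Theorem 1.5} and a number-field Mertens estimate, collapses to $c_K\cdot\tfrac{|C|}{|G|}\cdot X$, the residue $c_K$ of $\zeta_K(s)$ at $s=1$ emerging from the smooth-ideal asymptotic.

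The hardest step will be the last one: verifying that the smooth-ideal summation combines cleanly with the logarithmic-integral factors to yield exactly $c_K\cdot\tfrac{|C|}{|G|}\cdot X$, and that the several error contributions --- the PIT and CDT errors, the atypical-ideal corrections, and the tail $Y<N(\mathfrak{p})\le\sqrt X$ --- simultaneously stay inside $O(X(\log\log X)^2/\log X)$. The sum-to-integral trick of the proof of \textit{Lemma 3.4}, together with a careful balancing of the cutoff $Y$, will be the central technical devices.
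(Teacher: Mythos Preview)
Your plan has the two $\mathfrak{p}$-ranges reversed. With $Y=\exp\bigl((\log X)^{1-\delta}\bigr)$, the contribution from $N(\mathfrak{p})\le Y$ is an \emph{error} term: it is dominated by $\sum_{N(\mathfrak{p})\le Y}\Psi_{K,2}(X,\mathfrak{p})\ll X\log Y/\log X$ via (3.27). The main term comes entirely from the range $Y<N(\mathfrak{p})\le\sqrt{X}$, and it is \emph{there} that the paper swaps the $\mathfrak{p}$-sum for $\tfrac{|C|}{|G|}\int\cdots\,dt/\log t$ and controls the CDT error by the $E_1,E_2$-type computations. Your invocation of \textit{Lemma 3.4} to ``bound the tail $Y<N(\mathfrak{p})\le\sqrt{X}$'' is therefore misplaced: Lemma~3.4 estimates $\sum_{Q^2(I)\ge 2}(Q^2(I)-1)$ (and, inside its proof, the shifted quantity $|S_2(X/N(\mathfrak{p}),\mathfrak{p})|$), not the main sum $\sum|S_2(X,\mathfrak{p})|$ over that range.

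The second, more serious gap is in your last step. After the CDT replacement one is left with
\[
\frac{|C|}{|G|}\Biggl[\sum_{Y<N(\mathfrak{q})\le\sqrt{X}}\int_Y^{N(\mathfrak{q})}\Psi_K\!\left(\tfrac{X}{tN(\mathfrak{q})},t\right)\frac{dt}{\log t}
+\sum_{\sqrt{X}<N(\mathfrak{q})\le X/Y}\int_Y^{X/N(\mathfrak{q})}\Psi_K\!\left(\tfrac{X}{tN(\mathfrak{q})},t\right)\frac{dt}{\log t}\Biggr],
\]
and you propose to evaluate this directly via \textit{Theorem 1.5} and Mertens. That is not straightforward: the integrand involves $\Psi_K$, hence the Dickman function, and there is no clean closed form. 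The paper sidesteps this entirely by a comparison trick: it runs the \emph{identical} computation for $\sum_{2\le N(I)\le X}Q^2(I)$ (no conjugacy-class restriction), using PIT instead of CDT, and obtains the \emph{same} bracketed expression (without the $|C|/|G|$ factor) plus admissible errors. But $\sum Q^2(I)=c_KX+O(\cdots)$ directly from \textit{Theorem 1.5} together with \textit{Lemma 3.4} (this is where Lemma~3.4 actually enters), so the bracketed expression equals $c_KX$ up to errors, and multiplying by $|C|/|G|$ finishes the proof. You should build this comparison step into your argument; without it, ``collapses to $c_K\cdot\tfrac{|C|}{|G|}\cdot X$'' is an assertion rather than a proof.
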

    \begin{proof}
        Let us first recall the definition of $Q^2_C(I)$.
        \begin{align}
             Q_C^2(I) &:= \# \left\{\mathfrak{p} \subset \mathcal{O}_K :\; \mathfrak{p}\text{ is a prime; } I\subset \mathfrak{p}; \;M_2(I)=N(\mathfrak{p})\;\text{and}\;\left[\frac{L/K}{\mathfrak{p}}\right]=C,\;\mathfrak{p}\;\text{is unramified}\right\} .\nonumber
        \end{align}
        By definition, it is clear that ideals in $\mathcal{O}_K$ contained in prime ideals only of a particular norm do not contribute to the sum in the LHS. As done in the proof of \textit{Lemma 3.4}, in a similar fashion, we define the set $S_2(X,\mathfrak{p})$ as the set of ideals $I$ in $\mathcal{O}_K$ such that $N(I) \leq X$ with a unique and non-repeating prime ideal of the largest norm and $M_2(I)=N(\mathfrak{p})$. We note here that for any ideal $I \subset \mathfrak{p}$, for some prime $\mathfrak{p}$ such that $N(I)\leq X$ and $M_2(I)=N(\mathfrak{p})$, $N(\mathfrak{p}) \leq \sqrt{X}$. Therefore, using \textit{Lemma 3.1}, we have that
        \begin{align}
             \sum_{2\leq N(I)\leq X} Q_C^2(I) = \sum_{\substack{N(\mathfrak{p})\leq \sqrt{X} \\ \left[\frac{L/K}{\mathfrak{p}}\right]=C}} |S_2(X,\mathfrak{p})| +  O\left(\frac{X}{e^{c_3\sqrt{\log X\log\log X}}}\right).
        \end{align}
        We therefore move on to estimate the sum on the RHS of (4.1). We choose $I \in S_2(X,\mathfrak{p})$ such that $I=\mathfrak{M.pq}$, where $M_1(I)=\mathfrak{q}$, $M_2(I)=\mathfrak{p}$ and $\mathfrak{M} \subset \mathcal{O}_K$ is an ideal such that $M_1(\mathfrak{M})\leq N(\mathfrak{p})$. Thus, we write
        \begin{align}
            |S_2(X,\mathfrak{p})| = \sum_{\substack{N(\mathfrak{M}) \leq \frac{X}{N(\mathfrak{p})^2} \\ M_1(\mathfrak{M}) \leq N(\mathfrak{p})}} \sum_{\substack{N(\mathfrak{p})<N(\mathfrak{q}) \\ N(\mathfrak{M.pq}) \leq X}} 1 = \sum_{N(\mathfrak{p})<N(\mathfrak{q})\leq \frac{X}{N(\mathfrak{p})}} \sum_{\substack{N(\mathfrak{M})\leq \frac{X}{N(\mathfrak{pq})} \\ M_1(\mathfrak{M}) \leq N(\mathfrak{p})}} 1 .
        \end{align}
    Using the definition of $\Psi_K(X,Y)$, we then have that 
    \begin{align}
        |S_2(X,\mathfrak{p})| = \sum_{\substack{\mathfrak{q} \in \mathcal{O}_K \\ \mathfrak{q} \text{ is a prime} \\N(\mathfrak{p})<N(\mathfrak{q})\leq \frac{X}{N(\mathfrak{p})}}} \Psi_K\left(\frac{X}{N(\mathfrak{pq})},N(\mathfrak{p})\right).
    \end{align} 
    Now, we observe that for $Y \leq e^{(\log X)^{1-\delta}}$, for some $\delta  >0$, we have using (3.27) that
    \begin{align}
        \sum_{\substack{N(\mathfrak{p}) \leq Y \\ \left[\frac{L/K}{\mathfrak{p}}\right] =C}} |S_2(X,\mathfrak{p})| \leq \sum_{N(\mathfrak{p}) \leq Y} |S_2(X,\mathfrak{p})| \leq   \sum_{N(\mathfrak{p})\leq Y} \Psi_{K,2}\left({X},\mathfrak{p}\right) \ll \frac{X\log Y}{\log X}.
    \end{align}
        Therefore, we are left to estimate the sum of $|S_2(X,\mathfrak{p})|$ over the interval $Y<N(\mathfrak{p})\leq \sqrt{X}$. Therefore, from (4.3) we have
        \begin{align}
            \sum_{\substack{Y<N(\mathfrak{p}) \leq \sqrt{X} \\ \left[\frac{L/K}{\mathfrak{p}}\right] =C}} |S_2(X,\mathfrak{p})| = \sum_{\substack{Y<N(\mathfrak{p}) \leq \sqrt{X} \\ \left[\frac{L/K}{\mathfrak{p}}\right] =C}}\; \sum_{N(\mathfrak{p})<N(\mathfrak{q})\leq \frac{X}{N(\mathfrak{p})}} \Psi_K\left(\frac{X}{N(\mathfrak{p}\mathfrak{q})},N(\mathfrak{p})\right).
        \end{align}
        It is worth mentioning here that the idea of this proof is quite similar to the proof of \textit{Lemma 3.4}. We change the order of summation in the RHS of (4.5), which would require a split in the summation. We keep in mind the following conditions that $N(\mathfrak{p})$ need to satisfy:
        \begin{align}
            Y<N(\mathfrak{p}), \quad N(\mathfrak{p})<N(\mathfrak{q}), \quad N(\mathfrak{p})\leq \frac{X}{N(\mathfrak{q})},\quad N(\mathfrak{p}) \leq \sqrt{X} .\nonumber
        \end{align}
        Continuing from (4.5), we then get 
        \begin{align}
            \sum_{\substack{Y<N(\mathfrak{p}) \leq \sqrt{X} \\ \left[\frac{L/K}{\mathfrak{p}}\right] =C}} |S_2(X,\mathfrak{p})| &= \sum_{\substack{Y <N(\mathfrak{q}) \leq \sqrt{X}}} \;  \sum_{\substack{Y<N(\mathfrak{p})<N(\mathfrak{q}) \\ \left[\frac{L/K}{\mathfrak{p}}\right] =C}} \Psi_K\left(\frac{X}{N(\mathfrak{p}\mathfrak{q})},N(\mathfrak{p})\right) \nonumber \\ &\hspace{2cm}+ \sum_{\sqrt{X} < N(\mathfrak{q}) \leq \frac{X}{Y}} \;\sum_{\substack{Y<N(\mathfrak{p})\leq \frac{X}{N(\mathfrak{q})} \\ \left[\frac{L/K}{\mathfrak{p}}\right] =C} } \Psi_K\left(\frac{X}{N(\mathfrak{p}\mathfrak{q})},N(\mathfrak{p})\right) .
        \end{align}
        To bring in the Chebotarev Density, we now interchange the inner sums of both the double sums in the RHS of (4.6) with suitable integrals. This is indeed one of the most crucial steps of the whole proof. We do the replacements as follows:
        \begin{align}
            \sum_{\substack{Y<N(\mathfrak{p})<N(\mathfrak{q}) \\ \left[\frac{L/K}{\mathfrak{p}}\right] =C}} \Psi_K\left(\frac{X}{N(\mathfrak{p}\mathfrak{q})},N(\mathfrak{p})\right) \quad &\text{by} \quad \frac{|C|}{|G|}\int_Y^{N(\mathfrak{q})} \Psi_K\left(\frac{X}{tN(\mathfrak{q})},t\right)\frac{dt}{\log t} ,\nonumber \\
     \sum_{\substack{Y<N(\mathfrak{p})<\frac{X}{N(\mathfrak{q})} \\ \left[\frac{L/K}{\mathfrak{p}}\right] =C}} \Psi_K\left(\frac{X}{N(\mathfrak{p}\mathfrak{q})},N(\mathfrak{p})\right) \quad &\text{by} \quad \frac{|C|}{|G|}\int_Y^{\frac{X}{N(\mathfrak{q})}} \Psi_K\left(\frac{X}{tN(\mathfrak{q})},t\right)\frac{dt}{\log t}. \nonumber
        \end{align}
         Of course, it is now time to estimate the following errors that arise due to such a replacement:
    \begin{align}
        E_1&:=\sum_{Y<N(\mathfrak{q})\leq \sqrt{X}}\left(\sum_{\substack{Y<N(\mathfrak{p})<N(\mathfrak{q}) \\ \left[\frac{L/K}{\mathfrak{p}}\right] =C}} \Psi_K\left(\frac{X}{N(\mathfrak{p}\mathfrak{q})},N(\mathfrak{p})\right) - \frac{|C|}{|G|}\int_Y^{N(\mathfrak{q})} \Psi_K\left(\frac{X}{tN(\mathfrak{q})},t\right)\frac{dt}{\log t}\right) ,\nonumber \\
     E_2 &:=\sum_{\sqrt{X}<N(\mathfrak{q})\leq \frac{X}{Y}}\left(\sum_{\substack{Y<N(\mathfrak{p})<\frac{X}{N(\mathfrak{q})} \\ \left[\frac{L/K}{\mathfrak{p}}\right] =C}} \Psi_K\left(\frac{X}{N(\mathfrak{p}\mathfrak{q})},N(\mathfrak{p})\right)- \frac{|C|}{|G|}\int_Y^{\frac{X}{N(\mathfrak{q})}} \Psi_K\left(\frac{X}{tN(\mathfrak{q})},t\right)\frac{dt}{\log t}\right) .\nonumber
    \end{align}
From (4.6), we then have 
\begin{align}
    \sum_{\substack{Y<N(\mathfrak{p}) \leq \sqrt{X} \\ \left[\frac{L/K}{\mathfrak{p}}\right] =C}} |S_2(X,\mathfrak{p})| = \frac{|C|}{|G|}\sum_{Y<N(\mathfrak{q})\leq \sqrt{X}}\int_Y^{N(\mathfrak{q})} \Psi_K\left(\frac{X}{tN(\mathfrak{q})},t\right)\frac{dt}{\log t}& \nonumber \\+
\frac{|C|}{|G|}\sum_{\sqrt{X}<N(\mathfrak{q})\leq \frac{X}{Y}}\int_Y^{\frac{X}{N(\mathfrak{q})}}& \Psi_K\left(\frac{X}{tN(\mathfrak{q})},t\right)\frac{dt}{\log t}\nonumber \\&+E_1+E_2 .
\end{align}
To estimate the error terms $E_1$ and $E_2$, we use the strong form of Chebotarev Density Theorem (\textit{Theorem 1.2}). Changing the order of summations in $E_1$, we observe that
\begin{align}
    |E_1| &= \left| \sum_{Y<N(\mathfrak{q})\leq \sqrt{X}}\left(\sum_{\substack{Y<N(\mathfrak{p})<N(\mathfrak{q}) \\ \left[\frac{L/K}{\mathfrak{p}}\right] =C}} \Psi_K\left(\frac{X}{N(\mathfrak{p}\mathfrak{q})},N(\mathfrak{p})\right) - \frac{|C|}{|G|}\int_Y^{N(\mathfrak{q})} \Psi_K\left(\frac{X}{tN(\mathfrak{q})},t\right)\frac{dt}{\log t}\right)\right|  \nonumber \\
    &= \left| \sum_{Y<N(\mathfrak{q})\leq \sqrt{X}}\left(\sum_{\substack{Y<N(\mathfrak{p})<N(\mathfrak{q}) \\ \left[\frac{L/K}{\mathfrak{p}}\right] =C}}\; \sum_{\substack{N(\mathfrak{M}) \leq \frac{X}{N(\mathfrak{pq})} \\ M_1(\mathfrak{M})\leq N(\mathfrak{p})}} 1- \frac{|C|}{|G|}\int_Y^{N(\mathfrak{q})} \left\{\sum_{\substack{N(\mathfrak{M}) \leq \frac{X}{tN(\mathfrak{q})} \\ M_1(\mathfrak{M})\leq t}}1\right\}\frac{dt}{\log t}\right)\right| \nonumber \\
    &=    \sum_{Y<N(\mathfrak{q})\leq \sqrt{X}} \;\sum_{N(\mathfrak{M}) \leq \frac{X}{YN(\mathfrak{q})}}\left| \sum_{\substack{\max(Y,M_1(\mathfrak{M}))\leq N(\mathfrak{p})\leq  \min\left(N(\mathfrak{q}),\frac{X}{N(\mathfrak{Mq})}\right) \\  \left[\frac{L/K}{\mathfrak{p}}\right] =C            }       }      1    - \frac{|C|}{|G|}\int_{\max(Y,M_1(\mathfrak{M}))}^{\min\left(N(\mathfrak{q}),\frac{X}{N(\mathfrak{Mq})}\right)}\frac{dt}{\log t}    \right|.
\end{align}
Of course, the innermost summand involving the absolute value resembles the LHS of \textit{Theorem 1.2}. Also, we note that error function $\frac{X}{e^{c_1\sqrt{\frac{\log X}{n_K}}}}$ is increasing, and therefore, we use the CDT with the bound $\frac{X}{N(\mathfrak{Mq})}$ on $N(\mathfrak{p})$. We denote the constant $c_4 = \frac{c_1}{\sqrt{n_K}}$. Thus, from (4.8), we get
\begin{align}
     |E_1| &\ll \sum_{Y<N(\mathfrak{q})\leq \sqrt{X}} \;\sum_{N(\mathfrak{M}) \leq \frac{X}{YN(\mathfrak{q})}} \frac{X}{N(\mathfrak{Mq})e^{c_4\sqrt{\log \left(\frac{X}{N(\mathfrak{Mq})}\right)}}} \nonumber \\
    &\ll \frac{X}{e^{c_4\sqrt{\log Y}}}\sum_{Y<N(\mathfrak{q})\leq \sqrt{X}}\frac{1}{N(\mathfrak{q})} \;\sum_{N(\mathfrak{M}) \leq \frac{X}{YN(\mathfrak{q})}}\frac{1}{N(\mathfrak{M})} \nonumber \\ 
    &\ll \frac{X\log X\log\log X}{e^{c_4\sqrt{\log Y}}}.
\end{align}
Similarly, for $E_2$, we have that
\begin{align}
    |E_2| &= \left| \sum_{\sqrt{X}<N(\mathfrak{q})\leq\frac{X}{Y}}\left(\sum_{\substack{Y<N(\mathfrak{p})\leq \frac{X}{N(\mathfrak{q})} \\ \left[\frac{L/K}{\mathfrak{p}}\right] =C}} \Psi_K\left(\frac{X}{N(\mathfrak{p}\mathfrak{q})},N(\mathfrak{p})\right) - \frac{|C|}{|G|}\int_Y^{\frac{X}{N(\mathfrak{q})}} \Psi_K\left(\frac{X}{tN(\mathfrak{q})},t\right)\frac{dt}{\log t}\right)\right|  \nonumber \\
    &= \left| \sum_{Y<N(\mathfrak{q})\leq \frac{X}{Y}}\left(\sum_{\substack{Y<N(\mathfrak{p})\leq \frac{X}{N(\mathfrak{q})} \\ \left[\frac{L/K}{\mathfrak{p}}\right] =C}}\; \sum_{\substack{N(\mathfrak{M}) \leq \frac{X}{N(\mathfrak{pq})} \\ M_1(\mathfrak{M})\leq N(\mathfrak{p})}} 1- \frac{|C|}{|G|}\int_Y^{\frac{X}{N(\mathfrak{q})}} \left\{\sum_{\substack{N(\mathfrak{M}) \leq \frac{X}{tN(\mathfrak{q})} \\ M_1(\mathfrak{M})\leq t}}1\right\}\frac{dt}{\log t}\right)\right| \nonumber \\
    &= \sum_{Y<N(\mathfrak{q})\leq \frac{X}{Y}} \sum_{N(\mathfrak{M}) \leq \frac{X}{YN(\mathfrak{q})}}\left|           \sum_{\substack{\max(Y,M_1(\mathfrak{M}))\leq N(\mathfrak{p})\leq  \min\left(\frac{X}{N(\mathfrak{q})},\frac{X}{N(\mathfrak{Mq})}\right) \\  \left[\frac{L/K}{\mathfrak{p}}\right] =C            }       }      1    - \frac{|C|}{|G|}\int_{\max(Y,M_1(\mathfrak{M}))}^{\min\left(\frac{X}{N(\mathfrak{q})},\frac{X}{N(\mathfrak{Mq})}\right)}\frac{dt}{\log t}          \right| \nonumber \\
     &\ll \sum_{\sqrt{X}<N(\mathfrak{q}) \leq \frac{X}{Y}}\sum_{N(\mathfrak{M}) \leq \frac{X}{YN(\mathfrak{q})}} \frac{X}{N(\mathfrak{Mq})e^{c_4\sqrt{\log\left(\frac{X}{N(\mathfrak{Mq})}\right)}}} \nonumber \\
    &\ll \frac{X}{e^{c_4\sqrt{\log Y}}}\sum_{\sqrt{X}<N(\mathfrak{q})\leq \frac{X}{Y}}\frac{1}{N(\mathfrak{q})}\sum_{N(\mathfrak{M})\leq \frac{X}{YN(\mathfrak{q})}}\frac{1}{N(\mathfrak{M})} \nonumber \\
    &\ll \frac{X\log X\log \log X}{e^{c_4\sqrt{\log Y}}}.
\end{align}
Therefore, combining (4.7), (4.9), and (4.10), we have that
\begin{align}
     \sum_{\substack{Y<N(\mathfrak{p}) \leq \sqrt{X} \\ \left[\frac{L/K}{\mathfrak{p}}\right] =C}} |S_2(X,\mathfrak{p})| = \frac{|C|}{|G|}&\sum_{Y<N(\mathfrak{q})\leq \sqrt{X}}\int_Y^{N(\mathfrak{q})} \Psi_K\left(\frac{X}{tN(\mathfrak{q})},t\right)\frac{dt}{\log t} \nonumber \\+
&\frac{|C|}{|G|}\sum_{\sqrt{X}<N(\mathfrak{q})\leq \frac{X}{Y}}\int_Y^{\frac{X}{N(\mathfrak{q})}} \Psi_K\left(\frac{X}{tN(\mathfrak{q})},t\right)\frac{dt}{\log t}+O\left(\frac{X\log X\log \log X}{e^{c_4\sqrt{\log Y}}}\right).
\end{align}
Therefore, along with (4.4), we get from (4.11) that
\begin{align}
     \sum_{\substack{N(\mathfrak{p}) \leq \sqrt{X} \\ \left[\frac{L/K}{\mathfrak{p}}\right] =C}} |S_2(X,\mathfrak{p})| = \frac{|C|}{|G|}\left[\sum_{Y<N(\mathfrak{q})\leq \sqrt{X}}\right.&\left.\int_Y^{N(\mathfrak{q})} \Psi_K\left(\frac{X}{tN(\mathfrak{q})},t\right)\frac{dt}{\log t} +
\sum_{\sqrt{X}<N(\mathfrak{q})\leq \frac{X}{Y}}\int_Y^{\frac{X}{N(\mathfrak{q})}} \Psi_K\left(\frac{X}{tN(\mathfrak{q})},t\right)\frac{dt}{\log t}\right]\nonumber \\&+O\left(\frac{X\log Y}{\log X}\right)+O\left(\frac{X\log X\log \log X}{e^{c_4\sqrt{\log Y}}}\right).
\end{align}
Now, we will evaluate the pseudo-integro-sums using further estimates. Surprisingly, the very same idea used above will work here too. Let us recall the definition of $Q^2(I)$:
\begin{align}
    Q^2(I)&:=\# \{\mathfrak{p}\subset  \mathcal{O}_K:\;I \subset \mathfrak{p};\;M_2(I)=Nm(\mathfrak{p})\} . \nonumber
\end{align}
Note that by the definition of $S_2(X,\mathfrak{p})$, we can write using \textit{Lemma 3.1}, in a similar fashion as of (4.1) that
\begin{align}
    \sum_{2\leq N(I)\leq X} Q^2(I) = \sum_{N(\mathfrak{p})\leq \sqrt{X} } |S_2(X,\mathfrak{p})| +  O\left(\frac{X}{e^{c_3\sqrt{\log X\log\log X}}}\right).
\end{align}
Here, we observe that the sum on the right of (4.13) is exactly the same as that we treated in the first half of the proof, without the conjugacy class condition on the prime ideal $\mathfrak{p}$. But the change in the consideration of the condition hardly changes anything in the estimation of the sum. We can start by using the estimate in (3.27) to see that for $Y\leq e^{(\log X)^{1-\delta}}$, for some $\delta>0$, 
\begin{align}
    \sum_{N(\mathfrak{p})\leq Y} |S_2(X,\mathfrak{p})| \ll\frac{X\log Y}{\log X}.
\end{align}
Using the idea in (4.3), we can write here that
\begin{align}
    \sum_{Y<N(\mathfrak{p}) \leq \sqrt{X}} |S_2(X,\mathfrak{p})| = \sum_{\substack{Y<N(\mathfrak{p}) \leq \sqrt{X} }} \sum_{N(\mathfrak{p})<N(\mathfrak{q})\leq \frac{X}{N(\mathfrak{p})}} \Psi_K\left(\frac{X}{N(\mathfrak{p}\mathfrak{q})},N(\mathfrak{p})\right) .
\end{align}
As done earlier, we change the order of summation in (4.15) and then split the double sum into two to get
\begin{align}
    \sum_{\substack{Y<N(\mathfrak{p}) \leq \sqrt{X} }} |S_2(X,\mathfrak{p})| &= \sum_{\substack{Y <N(\mathfrak{q}) \leq \sqrt{X}}} \;  \sum_{\substack{Y<N(\mathfrak{p})<N(\mathfrak{q}) }} \Psi_K\left(\frac{X}{N(\mathfrak{p}\mathfrak{q})},N(\mathfrak{p})\right) \nonumber \\ &\hspace{2cm}+ \sum_{\sqrt{X} < N(\mathfrak{q}) \leq \frac{X}{Y}} \;\sum_{\substack{Y<N(\mathfrak{p})\leq \frac{X}{N(\mathfrak{q})}} } \Psi_K\left(\frac{X}{N(\mathfrak{p}\mathfrak{q})},N(\mathfrak{p})\right) .
\end{align}
We observe that the only difference between equations (4.6) and (4.16) is the conjugacy class condition. Thus, we do a similar replacement of the inner sums with integrals, but this time, without the Chebotarev Density factor in front of the integrals. We make the following replacements:
\begin{align}
    \sum_{\substack{Y<N(\mathfrak{p})<N(\mathfrak{q}) }} \Psi_K\left(\frac{X}{N(\mathfrak{p}\mathfrak{q})},N(\mathfrak{p})\right) \quad &\text{by} \quad \int_Y^{N(\mathfrak{q})} \Psi_K\left(\frac{X}{tN(\mathfrak{q})},t\right)\frac{dt}{\log t}, \nonumber \\
     \sum_{\substack{Y<N(\mathfrak{p})<\frac{X}{N(\mathfrak{q})} }} \Psi_K\left(\frac{X}{N(\mathfrak{p}\mathfrak{q})},N(\mathfrak{p})\right) \quad &\text{by} \quad \int_Y^{\frac{X}{N(\mathfrak{q})}} \Psi_K\left(\frac{X}{tN(\mathfrak{q})},t\right)\frac{dt}{\log t}. \nonumber
\end{align}
Of course, we then have the errors that need to be estimated. We denote them as:
\begin{align}
    E_3&:=\sum_{Y<N(\mathfrak{q})\leq \sqrt{X}}\left(\sum_{\substack{Y<N(\mathfrak{p})<N(\mathfrak{q}) }} \Psi_K\left(\frac{X}{N(\mathfrak{p}\mathfrak{q})},N(\mathfrak{p})\right) - \int_Y^{N(\mathfrak{q})} \Psi_K\left(\frac{X}{tN(\mathfrak{q})},t\right)\frac{dt}{\log t}\right), \nonumber \\
     E_4 &:=\sum_{\sqrt{X}<N(\mathfrak{q})\leq \frac{X}{Y}}\left(\sum_{\substack{Y<N(\mathfrak{p})<\frac{X}{N(\mathfrak{q})}}} \Psi_K\left(\frac{X}{N(\mathfrak{p}\mathfrak{q})},N(\mathfrak{p})\right)-\int_Y^{\frac{X}{N(\mathfrak{q})}} \Psi_K\left(\frac{X}{tN(\mathfrak{q})},t\right)\frac{dt}{\log t}\right). \nonumber
\end{align}
For the estimation of $E_3$ and $E_4$, we use \textit{Theorem 1.3} and other standard summation estimates, as used in the proof above, and also in the proof of \textit{Lemma 3.4}. Following similar modifications and calculations, we finally get that
\begin{align}
    |E_3| \ll \frac{X\log X \log\log X}{e^{c_1\sqrt{\log Y}}} \quad \text{and} \quad |E_4| \ll \frac{X\log X \log\log X}{e^{c_1\sqrt{\log Y}}}.
\end{align}
Therefore, combining (4.14), (4.16), and (4.17), we have that
\begin{align}
    \sum_{\substack{N(\mathfrak{p}) \leq \sqrt{X} }} |S_2(X,\mathfrak{p})| = \sum_{Y<N(\mathfrak{q})\leq \sqrt{X}}&\int_Y^{N(\mathfrak{q})} \Psi_K\left(\frac{X}{tN(\mathfrak{q})},t\right)\frac{dt}{\log t} +
\sum_{\sqrt{X}<N(\mathfrak{q})\leq \frac{X}{Y}}\int_Y^{\frac{X}{N(\mathfrak{q})}} \Psi_K\left(\frac{X}{tN(\mathfrak{q})},t\right)\frac{dt}{\log t}\nonumber \\&+O\left(\frac{X\log Y}{\log X}\right)+O\left(\frac{X\log X\log \log X}{e^{c_1\sqrt{\log Y}}}\right).
\end{align}
Further, using (4.13), we can write from (4.18) that
\begin{align}
    \sum_{2\leq N(I)\leq X} Q^2(I) = & \sum_{Y<N(\mathfrak{q})\leq \sqrt{X}}\int_Y^{N(\mathfrak{q})} \Psi_K\left(\frac{X}{tN(\mathfrak{q})},t\right)\frac{dt}{\log t} +
\sum_{\sqrt{X}<N(\mathfrak{q})\leq \frac{X}{Y}}\int_Y^{\frac{X}{N(\mathfrak{q})}} \Psi_K\left(\frac{X}{tN(\mathfrak{q})},t\right)\frac{dt}{\log t}\nonumber \\&+O\left(\frac{X\log Y}{\log X}\right)+O\left(\frac{X\log X\log \log X}{e^{c_1\sqrt{\log Y}}}\right)+  O\left(\frac{x}{e^{c_3\sqrt{\log X\log\log X}}}\right).
\end{align}
We can rewrite the LHS of (4.19), using \textit{Theorem 1.5}, as
\begin{align}
     \sum_{2\leq N(I)\leq X} Q^2(I) = \sum_{2 \leq N(I)\leq X}1+\sum_{\substack{2\leq N(I) \leq X \\ Q^2(I) \geq 2}} (Q^2(I)-1) = c_K\cdot X + \sum_{\substack{2\leq N(I) \leq X \\ Q^2(I) \geq 2}} (Q^2(I)-1)  + O\left(X^{1-\frac{1}{d}}\right),
\end{align}
where $d = [K:\mathbb{Q}]$. Then using \textit{Lemma 3.4}, we have that
\begin{align}
     \sum_{2\leq N(I)\leq X} Q^2(I) =
c_K\cdot X +O\left(\frac{X\log Y}{\log X}\right)+O\left(\frac{X\log\log X}{e^{c_1\sqrt{\log Y}}}\right) .
\end{align}
Thus, combining (4.19) and (4.21) and rearranging the terms, we get
\begin{align}
    &\sum_{Y<N(\mathfrak{q})\leq \sqrt{X}}\int_Y^{N(\mathfrak{q})} \Psi_K\left(\frac{X}{tN(\mathfrak{q})},t\right)\frac{dt}{\log t} +
\sum_{\sqrt{X}<N(\mathfrak{q})\leq \frac{X}{Y}}\int_Y^{\frac{X}{N(\mathfrak{q})}} \Psi_K\left(\frac{X}{tN(\mathfrak{q})},t\right)\frac{dt}{\log t} \nonumber \\ 
&= c_K\cdot X +O\left(\frac{X\log Y}{\log X}\right)+O\left(\frac{X\log X\log\log X}{e^{c_4\sqrt{\log Y}}}\right).
\end{align}
Interestingly, the LHS of (4.22) is exactly the expression in brackets in the RHS of (4.12) that we were required to estimate. Therefore, now combining (4.1), (4.12), and (4.22), we finally get that
\begin{align}
     \sum_{2\leq N(I)\leq X} Q_C^2(I) = c_K\cdot \frac{|C|}{|G|}\cdot X  +O\left(\frac{X\log Y}{\log X}\right)+O\left(\frac{X\log X\log\log X}{e^{c_4\sqrt{\log Y}}}\right).
\end{align}
  Thus, a suitable choice of $Y = e^{\left(\frac{2}{c_4}\log\log X\right)^2}$ gives our desired result.
    \end{proof}
 \begin{remark}
     We observe that the quantitative approach of the proof of \textit{Theorem 4.1} gives us an asymptotic relation, i.e.
     \begin{align}
         \sum_{2\leq N(I)\leq X} Q_C^2(I) \sim c_K\cdot \frac{|C|}{|G|}\cdot X,
     \end{align}
     along with an estimated error term.
 \end{remark}
\noindent 
\noindent Recalling \textit{Theorem 1.1} from [SW18], we see that it is also an asymptotic relation, and also, surprisingly, has the same right hand side. Precisely,
\begin{align}
    \sum_{2\leq N(I)\leq X} Q_C(I) \sim c_K\cdot \frac{|C|}{|G|}\cdot X.
\end{align}
 It is indeed intriguing to see that, irrespective of the $k^{th}$ largest norm that we choose, the sum on the left has the same asymptote due to the use of the Chebotarev Density Theorem. One can also prove the same asymptotic result for other $k^{th}$ largest norms, as done in the classical case for any arbitrary $k^{th}$ largest prime factors (see [AS(ip)]). We now prove the main theorem that provides us with the formula for the Chebotarev Density using the first and second order duality.  
\begin{theorem}
     Let $f$ be a characteristic function of the set $S(L/K;C)$. For a finite Galois extension $L/K$ and a fixed conjugacy class $C\subset G=Gal(L/K)$, we have that 
     \begin{align}
         \lim_{X\to \infty}\sum_{\substack{2\leq N(I) \leq X \\ I \in S(L/K;C)}}\frac{\mu_K(I)(\omega_K(I)-1)}{N(I)}= \lim_{X\to \infty}\sum_{2\leq N(I) \leq X }\frac{\mu_K(I)(\omega_K(I)-1)f(I)}{N(I)} = \frac{|C|}{|G|} .
         \nonumber 
     \end{align}
\end{theorem}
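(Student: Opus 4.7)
The approach is to apply the second-order duality of Theorem 2.3 weighted by $1/N(I)$ and convert it, via Theorem 4.1 and a swap of summation, into a statement about the target series
\begin{align*}
T(X) := \sum_{\substack{J \in S(L/K;C) \\ 2\leq N(J)\leq X}} \frac{\mu_K(J)(\omega_K(J)-1)}{N(J)}.
\end{align*}

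First, for each ideal $I$ with $Q^1(I)=1$ (the exceptional contribution being absorbed via Lemma 3.1, since $Q_C^2(I) \leq \omega_K(I) \ll \log N(I)$), Theorem 2.3 gives $Q_C^2(I) = \sum_{J \supset I} \mu_K(J)(\omega_K(J)-1)f(J)$. Dividing by $N(I)$, summing over $2 \leq N(I) \leq X$, and swapping the order of summation by writing each such $I$ as $I = JM$ for an ideal $M$, one obtains
\begin{align*}
\sum_{2 \leq N(I) \leq X} \frac{Q_C^2(I)}{N(I)} = \sum_{N(J) \leq X} \frac{\mu_K(J)(\omega_K(J)-1) f(J)}{N(J)} \sum_{N(M) \leq X/N(J)} \frac{1}{N(M)} + \text{error}.
\end{align*}

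Next, I would evaluate both sides asymptotically. For the left, Abel summation applied to Theorem 4.1 yields
\begin{align*}
\sum_{2 \leq N(I) \leq X} \frac{Q_C^2(I)}{N(I)} = c_K \frac{|C|}{|G|} \log X + O\bigl((\log\log X)^3\bigr).
\end{align*}
For the inner sum on the right, partial summation on Theorem 1.5 yields the Mertens-type estimate
\begin{align*}
\sum_{N(M) \leq Y} \frac{1}{N(M)} = c_K \log Y + \gamma_K^* + O(Y^{-1/d})
\end{align*}
for a constant $\gamma_K^*$ depending only on $K$. Substituting $Y = X/N(J)$, collecting, and dividing through by $c_K \log X$, one arrives at the Ces\`aro-type identity
\begin{align*}
T(X) - \frac{U(X)}{\log X} = \frac{|C|}{|G|} + O\!\left(\frac{(\log\log X)^3}{\log X}\right),
\end{align*}
with $U(X) := \sum_{J \in S(L/K;C),\, N(J) \leq X} \mu_K(J)(\omega_K(J)-1) \log N(J)/N(J)$.

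The main obstacle is the Tauberian conversion from this averaged identity to the pointwise limit $T(X) \to |C|/|G|$. I plan to circumvent it by decomposing
\begin{align*}
T(X) = \sum_{N(J) \leq X} \frac{\mu_K(J)\omega_K(J) f(J)}{N(J)} - \sum_{N(J) \leq X} \frac{\mu_K(J) f(J)}{N(J)}
\end{align*}
and invoking Sweeting--Woo's first-order identity (1.8), which evaluates the second sum as $-|C|/|G|$ in the limit. It then suffices to prove the algebraic analog of (1.13), namely $\sum_{N(J) \leq X} \mu_K(J)\omega_K(J) f(J)/N(J) \to 0$. Adding Lemma 2.2 to Theorem 2.3 yields the combined duality $\sum_{J \supset I}\mu_K(J)\omega_K(J) f(J) = Q_C^2(I) - Q_C(I)$ for ideals with $Q^1(I)=1$; the same swap-and-exchange then reduces matters to the cancellation $\sum_{N(I) \leq X}[Q_C^2(I) - Q_C(I)] = o(X/\log X)$, which in turn follows from the matching leading asymptotics of Theorem 4.1 and Sweeting--Woo's Theorem 1.1 (both $c_K(|C|/|G|) X$), with the error balance delivered by the strong Chebotarev Density Theorem (Theorem 1.2) and the refined estimates of Lemmas 3.2 and 3.4.
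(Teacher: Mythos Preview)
Your second plan---split off $\sum \mu_K f/N$ via Sweeting--Woo's (1.8) and reduce $\sum \mu_K\omega_K f/N\to 0$ to the combined duality $Q_C^2-Q_C$---is exactly the paper's strategy. The gap is in how you propose to execute the reduction. If ``the same swap-and-exchange'' means the $1/N(I)$-weighted manoeuvre from your first paragraph (swap, then insert $\sum_{N(M)\le Y}1/N(M)=c_K\log Y+\gamma_K^*+\cdots$), you have \emph{not} escaped the Tauberian obstacle: you again arrive at an averaged relation $S(X)-V(X)/\log X=o(1)$ with an uncontrolled $\log N(J)$-twist $V$, and nothing you cite bounds $V$. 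What the paper does instead is multiply the target series by $X$ and replace $c_K\,X/N(J)$ by the ideal count $\#\{I\subset J:N(I)\le X\}$ via Theorem~1.5, \emph{then} swap; this yields directly
\[
c_K\,X\sum_{\substack{2\le N(J)\le X\\ J\in S(L/K;C)}}\frac{\mu_K(J)\omega_K(J)}{N(J)}
=\sum_{2\le N(I)\le X}\bigl[Q_C^2(I)-Q_C(I)\bigr]+O\Bigl(X^{1-1/d}\!\!\sum_{N(J)\le X}\frac{|\mu_K(J)|\,\omega_K(J)}{N(J)^{1-1/d}}\Bigr),
\]
so one only needs $\sum_{N(I)\le X}[Q_C^2(I)-Q_C(I)]=o(X)$.

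This distinction is not cosmetic: the cancellation you actually assert, $o(X/\log X)$, is \emph{not available}. Theorem~4.1 gives error $O\bigl(X(\log\log X)^2/\log X\bigr)$, so the difference $\sum[Q_C^2-Q_C]$ is only $O\bigl(X(\log\log X)^2/\log X\bigr)$, which is $o(X)$ but not $o(X/\log X)$. Hence with your stated swap and your stated requirement the argument fails, whereas the paper's unweighted (count-based) swap needs precisely the $o(X)$ that Theorem~4.1 and Sweeting--Woo's Theorem~1.1 jointly supply. Your first, Ces\`aro-type detour can then be discarded entirely.
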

\begin{proof}
    We start by considering the sum on the left multiplied by $X$. We have
    \begin{align}
        X\sum_{\substack{2\leq N(I) \leq X \\ I \in S(L/K;C)}}\frac{\mu_K(I)\omega_K(I)}{N(I)} &= \sum_{\substack{2\leq N(I) \leq X \\ I \in S(L/K;C)}} \mu_K(I)\omega_K(I)\cdot\frac{X}{N(I)} \nonumber \\
        &= \frac{1}{c_K}\sum_{\substack{2\leq N(I) \leq X \\ I \in S(L/K;C)}} \mu_K(I)\omega_K(I) \left[c_K\cdot \frac{X}{N(I)}\right].
    \end{align}
    We note here, by \textit{Theorem 1.5}, that for a fixed ideal $I \subset \mathcal{O}_K$,
    \begin{align}
       \# \{J\subset \mathcal{O}_K:J\subset I; \;N(J)\leq X\} = \sum_{\substack{J \subset \mathcal{O}_K \\ N(J)\leq X \\ I \supset J  }}1 = c_K\cdot \frac{X}{N(I)} + O\left(\left(\frac{X}{N(I)}\right)^{1-\frac{1}{d}}\right) .
    \end{align}
Thus, using (4.27) in (4.26), we get
    \begin{align}
       X\sum_{\substack{2\leq N(I) \leq X \\ I \in S(L/K;C)}}\frac{\mu_K(I)\omega_K(I)}{N(I)} &= \frac{1}{c_K}\sum_{\substack{2\leq N(I) \leq X \\ I \in S(L/K;C)}} \mu_K(I)\omega_K(I) \left[\sum_{\substack{J \subset \mathcal{O}_K \\ N(J)\leq X \\ I \supset J  }}1 + O\left(\left(\frac{X}{N(I)}\right)^{1-\frac{1}{d}}\right) \right] \nonumber \\
       &= \frac{1}{c_K}\sum_{\substack{2\leq N(I) \leq X \\ I \in S(L/K;C)}} \mu_K(I)\omega_K(I) \sum_{\substack{J \subset \mathcal{O}_K \\ N(J)\leq X \\ I \supset J  }}1 + O\left(  \sum_{\substack{2\leq N(I) \leq X \\ I \in S(L/K;C)}} \mu_K(I)\omega_K(I)\left(\frac{X}{N(I)}\right)^{1-\frac{1}{d}} \right) .
    \end{align}
    Let us first estimate the error term on the right of (4.28). To do so, we recall the following bound of the sum of the generalized Möbius function (see \textit{Lemma 2.2 (ii)} in [SW18]) that follows analogously from their classical counterparts:
    \begin{align}
          \sum_{N(I)\leq X} \frac{\mu_K(I)}{N(I)}&=O\left(e^{-c_6(\log X)^{\frac{1}{12}}}\right).
    \end{align}
    Also, we use the following number field analogue [Na84] of the Hardy-Ramanujan estimate of $\omega(n)$ [HW79]:
    \begin{align}
        \omega_K(I)\sim \log\log N(I)  .
    \end{align}
    Hence, using (4.29) and (4.30) in the error in (4.28), we have
    \begin{align}
        \sum_{\substack{2\leq N(I) \leq X \\ I \in S(L/K;C)}} \mu_K(I)\omega_K(I)\left(\frac{X}{N(I)}\right)^{1-\frac{1}{d}} &\ll \log\log X \sum_{N(I)\leq X}\frac{\mu_K(I)}{N(I)}X^{1-\frac{1}{d}}N(I)^{\frac{1}{d}} \nonumber \\
        &\ll \frac{X\log\log X}{e^{c_6(\log X)^{\frac{1}{12}}}}.
    \end{align}
    Now, for the main term in the RHS of (4.28), we observe with an interchange in summation that
    \begin{align}
        \frac{1}{c_K}\sum_{\substack{2\leq N(I) \leq X \\ I \in S(L/K;C)}} \mu_K(I)\omega_K(I) \sum_{\substack{J \subset \mathcal{O}_K \\ N(J)\leq X \\ I \supset J  }}1 &=\frac{1}{c_K}   \sum_{2\leq N(J)\leq X}\sum_{I\supset J } \mu_K(I)\omega_K(I)f(I) .
    \end{align}
    We first split the inner sum in the RHS of (4.32) into the following two parts:
    \begin{align}
        \sum_{\substack{I\supset J}} \mu_K(I)\omega_K(I)f(I)  = \sum_{\substack{I\supset J}} \mu_K(I)(\omega_K(I)-1)f(I) +\sum_{\substack{I\supset J}} \mu_K(I)f(I) .
    \end{align}
    Hence, (4.32) and (4.33) together yield 
    \begin{align}
        \frac{1}{c_K}   \sum_{2\leq N(J)\leq X}\sum_{\substack{I\supset J}} \mu_K(I)\omega_K(I)f(I)  = \frac{1}{c_K}   \sum_{2\leq N(J)\leq X} \sum_{\substack{I\supset J}} \mu_K(I)(\omega_K(I)-1)f(I)  + \frac{1}{c_K}   \sum_{2\leq N(J)\leq X}\sum_{\substack{I\supset J}} \mu_K(I)f(I)  .
    \end{align}
    Applying \textit{Theorem 2.3}, and then \textit{Lemma 3.1} and \textit{Theorem 4.1} together in the first sum in the RHS of (4.34), we have
    \begin{align}
        \frac{1}{c_K}   \sum_{2\leq N(J)\leq X} \sum_{\substack{I\supset J}} \mu_K(I)(\omega_K(I)-1)f(I) &= \frac{1}{c_K} \sum_{\substack{2\leq N (J)\leq X \\ Q^1(J)=1}}Q_C^2(J) \nonumber \\
        &=  \frac{|C|}{|G|}\cdot X + O\left(\frac{X(\log\log X)^2}{\log X}\right).
    \end{align}
    Further, applying \textit{Lemma 2.2} and then \textit{Theorem 1.1} of [SW18] in the second sum in the RHS of (4.34), we have
    \begin{align}
        \frac{1}{c_K}   \sum_{2\leq N(J)\leq X}\sum_{\substack{I\supset J}} \mu_K(I)f(I)  &= -\frac{1}{c_K}   \sum_{2\leq N(J)\leq X} Q_C^1(J)  \nonumber \\
        &= -\frac{|C|}{|G|}\cdot X +O\left(\frac{X}{e^{c_5(\log X)^{\frac{1}{3}}}}\right).
    \end{align}
    Therefore, combining (4.32), (4.34), (4.35), and (4.36), we get
    \begin{align}
         \frac{1}{c_K}\sum_{\substack{2\leq N(I) \leq X \\ I \in S(L/K;C)}} \mu_K(I)\omega_K(I) \sum_{\substack{J \subset \mathcal{O}_K \\ N(J)\leq X \\ I \supset J  }}1 \ll \frac{X(\log\log X)^2}{\log X}.
    \end{align}
    Thus, (4.28) along with (4.31) and (4.37) yield
    \begin{align}
         X\sum_{\substack{2\leq N(I) \leq X \\ I \in S(L/K;C)}}\frac{\mu_K(I)\omega_K(I)}{N(I)}  \ll \frac{X(\log\log X)^2}{(\log X)}.
    \end{align}
    Canceling $X$ on both sides in (4.38), we get
    \begin{align}
        \sum_{\substack{2\leq N(I) \leq X \\ I \in S(L/K;C)}}\frac{\mu_K(I)\omega_K(I)}{N(I)}  \ll \frac{(\log\log X)^2}{(\log X)}.
    \end{align}
Further, using the quantitative version of \textit{Theorem 1.2} of [SW18], we have from (4.39) that
\begin{align}
    O\left(\frac{(\log\log X)^2}{\log X}\right) =  \sum_{\substack{2\leq N(I) \leq X \\ I \in S(L/K;C)}}\frac{\mu_K(I)\omega_K(I)}{N(I)}  &=  \sum_{\substack{2\leq N(I) \leq X \\ I \in S(L/K;C)}}\frac{\mu_K(I)(\omega_K(I)-1)}{N(I)} + \sum_{\substack{2\leq N(I) \leq X \\ I \in S(L/K;C)}}\frac{\mu_K(I)}{N(I)} \nonumber \\
    &=\sum_{\substack{2\leq N(I) \leq X \\ I \in S(L/K;C)}}\frac{\mu_K(I)(\omega_K(I)-1)}{N(I)} - \frac{|C|}{|G|} + O\left(e^{-c_6(\log X)^{\frac{1}{12}}}\right).
\end{align}
Thus, rearranging (4.40), finally get our desired result:
\begin{align}
    \sum_{\substack{2\leq N(I) \leq X \\ I \in S(L/K;C)}}\frac{\mu_K(I)(\omega_K(I)-1)}{N(I)} = \frac{|C|}{|G|}+ O\left(\frac{(\log\log X)^2}{\log X}\right) .
\end{align}
Taking $X\to \infty$ on both sides of (4.41), we have our desired formula for the Chebotarev Density.
\end{proof}
\begin{remark}
    It is important to note here that this method to obtain the formula is significantly different from any that has been used earlier. The method used in [SW18] uses the sums of $\mu_K(I)\log N(I)$, which faces difficulties in our case as the error estimates grow large due to the largeness of $\Psi_{K,2}(X,Y)$. Difficulties were also faced in following a method similar to the one used in [Se25] due to the absence of a one-to-one correspondence between the multiples of an ideal $I \subset \mathcal{O}_K$ and multiples of its norm  $N(I)$. 
\end{remark}

 \section{Two Estimates of Sums involving $\mu_K(I)$ and $\omega_K(I)$}
 In [Al77], [AJ24] and [Se25], we observe that the authors have used the estimates of sums involving $\mu(n)$ and $\omega(n)$, which are required to prove the respective final results. As can be observed in \S4, such estimates were not required to obtain our main result. Yet, we are capable of finding estimates of such sums with the machinery that we have developed. In this section, therefore, we will prove two theorems that give us quantitative estimates of such sums, which, although they are not necessary to get the formula for the Chebotarev Density, can be viewed as an application of our duality identities.
 \begin{theorem}
     For a finite Galois extension $L/K$ and a fixed conjugacy class $C\subset G=Gal(L/K)$, we have for some positive constant $c_{10}$ that
     \begin{align}
         \sum_{\substack{N(I)\leq X \\ I \in S(L/K;C)}}\mu_K(I) \ll \frac{X\log X}{e^{c_{10}(\log X)^{\frac{1}{12}}}}. \nonumber
     \end{align}
 \end{theorem}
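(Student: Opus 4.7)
The plan is to deduce Theorem 5.1 by Abel summation from the quantitative version of Theorem 1.2 of [SW18] (the same estimate already invoked in equation (4.40) of this paper), which asserts
\begin{equation*}
T(X) := \sum_{\substack{2 \leq N(I) \leq X \\ I \in S(L/K;C)}} \frac{\mu_K(I)}{N(I)} = -\frac{|C|}{|G|} + E(X), \qquad |E(X)| \ll e^{-c_6(\log X)^{1/12}}.
\end{equation*}

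Denote by $S(X)$ the sum whose bound we seek. Grouping ideals by their norm $n$ and setting $a(n) := \sum_{N(I)=n,\,I\in S(L/K;C)} \mu_K(I)/N(I)$, we have $T(X) = \sum_{n\leq X} a(n)$ while $S(X) = \sum_{n\leq X} n\,a(n)$. Abel summation with weight $g(n) = n$, $g'(t) = 1$, then gives
\begin{equation*}
S(X) = X \cdot T(X) - \int_1^X T(t)\,dt.
\end{equation*}
Substituting $T(t) = -|C|/|G| + E(t)$, the two contributions from $-|C|/|G|$ cancel up to an $O(1)$ boundary term, leaving
\begin{equation*}
S(X) = X\,E(X) - \int_1^X E(t)\,dt + O(1).
\end{equation*}

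The term $X\,E(X)$ is immediately $\ll X e^{-c_6(\log X)^{1/12}}$. For the integral I would split at $t = \sqrt{X}$: on $[1,\sqrt{X}]$ the integrand is uniformly bounded so contributes only $O(\sqrt{X})$, while on $[\sqrt{X},X]$ one has $(\log t)^{1/12} \geq (\log X)^{1/12}/2^{1/12}$, giving $|E(t)| \ll \exp\bigl(-(c_6/2^{1/12})(\log X)^{1/12}\bigr)$ and hence a contribution of $\ll X \exp\bigl(-(c_6/2^{1/12})(\log X)^{1/12}\bigr)$. Combining, $|S(X)| \ll X\,e^{-c_{10}(\log X)^{1/12}}$ for any $c_{10} \leq c_6/2^{1/12}$, which is in fact stronger than the stated bound, since the extra $\log X$ factor on the right-hand side only weakens it.

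The only real obstacle is the integral estimate, handled by the routine $\sqrt{X}$-splitting trick above; once this is in place the rest is formal. An alternative would be to unfold Lemma 2.2 and mimic the Chebotarev-density style used in Theorem 4.2, expressing $\mu_K(I)f(I)$ via Möbius inversion as $-\sum_{J\mid I}\mu_K(I/J)Q_C(J)$ and then convolving with a number-field Mertens bound; but since the required quantitative input is precisely the [SW18] estimate for $T(X)$, Abel summation gives the cleanest path to the result.
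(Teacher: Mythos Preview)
Your Abel-summation argument is correct and in fact yields a slightly sharper bound (no $\log X$ loss), but it is \emph{not} the route the paper takes. The paper proceeds by applying M\"obius inversion to the first-order duality identity (Lemma~2.2), writing
\[
\sum_{N(I)\le X}\mu_K(I)f(I)=-\sum_{N(I)\le X}\sum_{JJ'=I}\mu_K(J')Q_C^1(J),
\]
then executing the Dirichlet hyperbola method at $\sqrt{X}$: the piece with $N(J')\le\sqrt{X}$ is handled via the asymptotic for $\sum_{N(J)\le Z}Q_C^1(J)$ (Theorem~1.1 of [SW18]) together with the bound $\sum_{N(J')\le Z}\mu_K(J')/N(J')\ll e^{-c_6(\log Z)^{1/12}}$, while the piece with $N(J)\le\sqrt{X}$ uses the unweighted Mertens bound $\sum_{N(J')\le Z}\mu_K(J')\ll Z e^{-c_6(\log Z)^{1/12}}$ and the boundedness of $Q_C^1$. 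The hyperbola split is what produces the extra $\log X$ in the final estimate.

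In terms of what each approach buys: yours is shorter and sharper, but it black-boxes the quantitative Theorem~1.2 of [SW18], which in [SW18] is itself \emph{derived} from Theorem~1.1 via a convolution/hyperbola argument of exactly the kind the paper reproduces here. So the paper's proof is closer to first principles (and showcases the duality Lemma~2.2 as an ``application'', which is the stated purpose of \S5), whereas your argument observes that once the weighted estimate for $T(X)$ is in hand, partial summation immediately upgrades it to an unweighted one. Both are valid; your alternative mentioned at the end (``unfold Lemma~2.2 \ldots\ convolve with a number-field Mertens bound'') is precisely what the paper does.
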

 \begin{proof}
     Apriori, we note that the function $f$ has already been defined in \S2 (see \textit{Lemma 2.2}) and \S4 (see \textit{Theorem 4.3}) as the indicator function $S(L/K;C)$. Therefore, continuing from the LHS above, we have
     \begin{align}
          \sum_{\substack{N(I)\leq X \\ Ic \in S(L/K;C)}}\mu_K(I) = \sum_{N(I)\leq X}\mu_K(I)f(I).
     \end{align}
     Now, using the number field analogue of M\"obius Inversion in \textit{Lemma 2.2}, we get from (5.1)
     \begin{align}
         \sum_{N(I)\leq X}\mu_K(I)f(I) = -\sum_{N(I)\leq X}\sum_{JJ'=I}\mu_K(J')Q_C^1(J).
     \end{align}
     Note, the second sum runs over all ideals $J \supset I$, and we write $Q_C(J)=Q_C^1(J)$. Truncating the double sum at $\sqrt{X}$ and using the hyperbola method, we deduce
     \begin{align}
          \sum_{N(I)\leq X}\mu_K(I)f(I) = - \sum_{N(J')\leq \sqrt{X}}\mu_K(J')\sum_{N(J)\leq \frac{X}{N(J')}}Q_C^1(J) -\sum_{N(J)\leq \sqrt{X}}Q_C^1(J)\sum_{\sqrt{X}<N(J')\leq \frac{X}{N(J)}}\mu_K(J').
     \end{align}
     Using \textit{Theorem 1.1} in [SW18] in the first double sum on the right of (5.3), we get
     \begin{align}
         \sum_{N(J')\leq \sqrt{X}}\mu_K(J')\sum_{N(J)\leq \frac{X}{N(J')}}Q_C^1(J) = \sum_{N(J')\leq \sqrt{X}}\mu_K(J')\left[c_K \cdot \frac{|C|}{|G|}\cdot \frac{X}{N(J')}+O\left(\frac{X}{N(J')e^{c_5\left(\log \frac{X}{N(J')}\right)^{\frac{1}{3}}}}\right) \right].
     \end{align}
     Thus, using (4.29) in (5.4), we get that
     \begin{align}
           \sum_{N(J')\leq \sqrt{X}}\mu_K(J')\sum_{N(J)\leq \frac{X}{N(J')}}Q_C^1(J) &\ll \frac{X}{e^{c_6(\log X)^{\frac{1}{12}}}} + \frac{X\log X}{e^{c_7(\log X)^{\frac{1}{3}}}} \nonumber \\
           &\ll \frac{X\log X}{e^{c_8(\log X)^{\frac{1}{12}}}},
     \end{align}
     where $c_8=\min\{c_9,c_7\}$ and $c_7=\frac{c_5}{\sqrt[3]{2}},\;c_9 = \frac{c_6}{\sqrt[12]{2}}$. Here, we note another\footnote[11]{first one is (4.29) in \S4} estimate of the sum of the generalized M\"obius function (see \textit{Lemma 2.2 (i)} in [SW18]) that follow analogously from its classical case:
     \begin{align}
         \sum_{N(I)\leq X} \mu_K(I)&=O\left(Xe^{-c_6(\log X)^{\frac{1}{12}}}\right)  .
     \end{align}
     For the second double sum on the right of (5.3), we observe using (5.6) that
\begin{align}
    \sum_{N(J)\leq \sqrt{X}}Q_C^1(J)\sum_{\sqrt{X}<N(J')\leq \frac{X}{N(J)}}\mu_K(J') \ll \sum_{N(J)\leq \sqrt{X}} \frac{X}{N(J)e^{c_6\left(\log \frac{X}{N(J)}\right)^{\frac{1}{12}}}} \ll \frac{X\log X}{e^{c_9(\log X)^{\frac{1}{12}}}}.
\end{align}
Here, we use the fact that $Q_C^1(I)$ is bounded by the degree of extension $K/\mathbb{Q}$.  Therefore, combining (5.3), (5.5), and (5.7), and choosing $c_{10} = \min\{c_8,c_9\}$, we have
    \begin{align}
          \sum_{N(I)\leq X}\mu_K(I)f(I) \ll \frac{X\log X}{e^{c_{10}(\log X)^{\frac{1}{12}}}}.
    \end{align}
    This proves our lemma. 
 \end{proof}
 \begin{remark}
     We observe that here we used the first order duality lemma. The proofs in [SW18] did not require such an estimate and therefore, \textit{Theorem 5.1} was not proved then. 
 \end{remark}
\noindent The next theorem gives us an estimate of the sum involving both $\mu_K(I)$ and $\omega_K(I)$. In this theorem, we will use the second order duality to get our required estimate.
\begin{theorem}
    For a finite Galois extension $L/K$ and a fixed conjugacy class $C\subset G=Gal(L/K)$, we have 
    \begin{align}
        \sum_{\substack{N(I)\leq X \\ I \in S(L/K;C)}}\mu_K(I)\omega_K(I) \ll \frac{X(\log\log X)^4}{\log X}. \nonumber
    \end{align}
\end{theorem}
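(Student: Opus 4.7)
The approach is to decompose $\mu_K(I)\omega_K(I) = \mu_K(I)(\omega_K(I)-1) + \mu_K(I)$, splitting the sum over $I \in S(L/K;C)$ into two pieces. The sum involving $\mu_K(I)f(I)$ alone is bounded directly by \textit{Theorem 5.1}, contributing $O\!\left(X\log X/e^{c_{10}(\log X)^{1/12}}\right)$, which decays super-polynomially in $\log X$ and is immediately absorbed into the claimed error.

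For the principal piece, the plan is to invoke the weighted-sum asymptotic already derived in the proof of \textit{Theorem 4.3} (cf.\ equation (4.41)),
\begin{align*}
B(X) := \sum_{\substack{2 \leq N(I) \leq X \\ I \in S(L/K;C)}} \frac{\mu_K(I)(\omega_K(I)-1)}{N(I)} = \frac{|C|}{|G|} + O\!\left(\frac{(\log\log X)^2}{\log X}\right),
\end{align*}
and to convert it into an unweighted estimate by partial summation. Aggregating ideals of equal norm into integer-indexed coefficients $c_n := \sum_{N(I) = n,\,I \in S(L/K;C)} \mu_K(I)(\omega_K(I)-1)$, so that $B(X) = \sum_{n \leq X} c_n/n$, Abel summation yields $\sum_{n \leq X} c_n = X B(X) - \sum_{n < X} B(n)$. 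The two contributions from the main term $|C|/|G|$ cancel up to $O(1)$, while each error contribution is of size $O(X(\log\log X)^2/\log X)$; the only quantitative input needed is the integral estimate $\int_2^X (\log\log t)^2/\log t\, dt \ll X(\log\log X)^2/\log X$, which follows by splitting the interval at $\sqrt{X}$ (the range $[2,\sqrt{X}]$ contributes only $O(\sqrt{X}(\log\log X)^2)$, while on $[\sqrt{X},X]$ the integrand enjoys the bound $2(\log\log X)^2/\log X$). This yields
\begin{align*}
\sum_{\substack{N(I) \leq X \\ I \in S(L/K;C)}} \mu_K(I)(\omega_K(I)-1) \ll \frac{X(\log\log X)^2}{\log X},
\end{align*}
and combining with the \textit{Theorem 5.1} bound proves the desired estimate, in fact slightly stronger than stated since $(\log\log X)^2 \leq (\log\log X)^4$ for all large $X$.

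The main technical obstacle is the rigorous justification of Abel summation for a sum indexed by ideals rather than integers; this is bypassed by the aggregation step above, which reduces everything to a classical integer-indexed partial summation with coefficients $c_n$. A secondary point is the exact cancellation of the two leading $|C|/|G|$ contributions, which is immediate after writing out $X B(X)$ and $\sum_{n < X} B(n)$ explicitly. One could alternatively attempt a direct Möbius-inversion attack based on \textit{Theorem 2.3} together with hyperbola splitting and \textit{Theorem 4.1}, in analogy with the proof of \textit{Theorem 5.1}; however, because the error in \textit{Theorem 4.1} is only $O(X(\log\log X)^2/\log X)$ (compared to the much sharper $e^{-c(\log X)^{1/3}}$ error available in \textit{Theorem 1.1} of [SW18] that powers the \textit{Theorem 5.1} argument), this direct route gives a weaker bound of $O(X(\log\log X)^2)$ that is insufficient. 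The Abel-summation route is therefore essential.
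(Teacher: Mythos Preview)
Your argument is correct and in fact delivers the sharper bound $O\bigl(X(\log\log X)^2/\log X\bigr)$. The paper takes a different route for the $(\omega_K-1)$-piece: rather than recycling the weighted estimate (4.41) via partial summation, it M\"obius-inverts the second-order duality (\textit{Theorem~2.3}) to write $\mu_K(I)(\omega_K(I)-1)f(I)=\sum_{JJ'=I,\;Q^1(J)=1}\mu_K(J')Q_C^2(J)$ and then applies the hyperbola method with the cut placed at a \emph{small} parameter $T$ (the paper takes $T=e^{(2\log\log X)^2}$), feeding in \textit{Theorem~4.1} on one side and the M\"obius bounds (4.29), (5.6) on the other. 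So your assertion that this direct route ``gives a weaker bound of $O(X(\log\log X)^2)$ that is insufficient'' and that the Abel-summation route is ``essential'' overstates the case: the $O(X(\log\log X)^2)$ outcome is what one gets with the naive cut at $\sqrt{X}$, and the paper's device is precisely to move the cut far to the left so as to trade the coarse \textit{Theorem~4.1} error against the saving from M\"obius cancellation, which does recover a bound of the shape $X(\log\log X)^A/\log X$. What your approach buys is economy and a cleaner exponent: by leveraging (4.41) directly and undoing the $1/N(I)$ weight through partial summation, you bypass the hyperbola bookkeeping entirely and arrive at $A=2$ rather than the paper's $A=4$.
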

\begin{proof}
    Let $f(I)$ be the characteristic function of $S(L/K;C)$. Therefore, we have
    \begin{align}
          \sum_{\substack{N(I)\leq X \\ I \in S(L/K;C)}}\mu_K(I)\omega_K(I) &= \sum_{N(I)\leq X }\mu_K(I)\omega_K(I)f(I) \nonumber \\
          &= \sum_{N(I)\leq X }\mu_K(I)(\omega_K(I)-1)f(I) + \sum_{N(I)\leq X }\mu_K(I)f(I).
    \end{align}
    Using M\"obius Inversion on \textit{Theorem 2.3}  and then applying the same in the first sum on the right of (5.9), we have 
    \begin{align}
        \sum_{N(I)\leq X }\mu_K(I)(\omega_K(I)-1)f(I) = \sum_{N(I)\leq X}\sum_{\substack{JJ' =I \\ Q^1(J)=1}} \mu_K(J')Q_C^2(J) .
    \end{align}
    Using the hyperbola method by truncating the double sum in (5.10) at $T$\footnote[12]{a suitable choice of $T$ will be made at the end of the proof}, we get
    \begin{align}
          \sum_{N(I)\leq X }\mu_K(I)(\omega_K(I)-1)f(I) = \sum_{N(J')
          \leq T}\mu_K(J')\sum_{\substack{N(J)\leq \frac{X}{N(J')}\\ Q^1(J)=1}}Q_C^2(J)+\sum_{\substack{N(J)\leq \frac{X}{T} \\ Q^1(J)=1 }}Q_C^2(J)\sum_{T<N(J')\leq \frac{X}{N(J)}}\mu_K(J').
    \end{align}
    Using \textit{Theorem 4.1} and the estimate in (4.29), we get from the first double sum in (5.11) that
    \begin{align}
        \sum_{N(J')
          \leq T}\mu_K(J')\sum_{\substack{N(J)\leq \frac{X}{N(J')} \\ Q^1(J)=1}}Q_C^2(J) &= \sum_{N(J')
          \leq T}\mu_K(J')\left[ c_K\cdot \frac{|C|}{|G|}\cdot 
        \frac{X}{N(J')}+ O\left(\frac{X(\log\log X)^2}{\log 
        \left(\frac{X}{N(J')}\right)}\right)  \right] \nonumber \\
        &\ll c_K\cdot \frac{|C|}{|G|}\cdot X\sum_{N(J')\leq T} \frac{\mu_K(J')}{N(J')} + O\left(\frac{X(\log \log X)^2}{\log \left(\frac{X}{T}\right)}\sum_{N(J')\leq T}\frac{1}{N(J')}\right)
        \nonumber \\
        &\ll \frac{X}{e^{c_6(\log T)^{\frac{1}{12}}}} + \frac{X\log T(\log\log X)^2}{\log \left(\frac{X}{T}\right)}.
    \end{align}
    For the second double sum in the RHS of (5.11), we use (5.6) and the fact that $Q_C^2(I)$ is bounded to get
    \begin{align}
        \sum_{\substack{N(J)\leq \frac{X}{T} \\ Q^1(J)=1}}Q_C^2(J)\sum_{T<N(J')\leq \frac{X}{N(J)}}\mu_K(J') \ll \sum_{N(J)\leq \frac{X}{T}} \frac{X}{N(J')e^{c_6\left(\log \frac{X}{N(J')}\right)^{\frac{1}{12}}}} \ll \frac{X\log X}{e^{c_6(\log T)^{\frac{1}{12}}}}.
    \end{align}
    Further, in the second sum of (5.9), we use \textit{Theorem 5.1} to get
    \begin{align}
        \sum_{N(I)\leq X }\mu_K(I)f(I) \ll \frac{X\log X}{e^{c_8(\log X)^{\frac{1}{12}}}} .
    \end{align}
    Finally, combining (5.9), (5.11), (5.12), (5.13), and (5.14), we have
    \begin{align}
         \sum_{\substack{N(I)\leq X \\ I \in S(L/K;C)}}\mu_K(I)\omega_K(I) \ll  \frac{X}{e^{c_6(\log T)^{\frac{1}{12}}}} + \frac{X\log T(\log\log X)^2}{\log \left(\frac{X}{T}\right)} + \frac{X\log X}{e^{c_6(\log T)^{\frac{1}{12}}}} +\frac{X\log X}{e^{c_8(\log X)^{\frac{1}{12}}}} .
    \end{align}
    Therefore, with the choice of $T=e^{(2\log\log X)^2}$ above, we get our desired result.
    \end{proof}
    \begin{remark}
        Both the above theorems are important applications of our duality identities. Not only does the duality help us get a new formula for the Chebotarev Density, but it also helps us estimate different sums over restricted sets of ideals. Higher order dualities can similarly be used to deduce such estimates of other sums.\footnote[13]{one can follow methods in [AS(ip)] to get estimates of new sums} 
    \end{remark}
\section{Duality for Arbitrary Sets of Prime Ideals}
In \S2, we prove a general higher order duality identity (\textit{Theorem 2.4}) involving prime ideals of $k^{th}$ largest norm and the smallest norm. But the duality has been specific to only a particular condition, i.e., for prime ideals being unramified in the bigger field of extension and satisfying the Artin Symbol condition for a fixed conjugacy class $C$ of a Galois group of a corresponding Galois extension. But we can extend the duality identity for a higher class of conditions without having to hardly change anything in the proofs. In this section, we will state\footnote[14]{the proof will be omitted as they exactly follow the same idea as \textit{Theorem 2.3} and \textit{Theorem 2.4}} the general duality for arbitrary but fixed sets of prime ideals in a given ring of integers. \\ \\
Let $K $ be a number field and $\mathcal{O}_K$ the corresponding ring of integers. Let $\mathcal{A}\subset \mathfrak{P}(K)$\footnote[15]{see p. 6 for definition} be an arbitrary but fixed set of prime ideals in $\mathcal{O}_K$. We define the following sets:
\begin{align}
    S(\mathcal{A})&:=\{I\subset\mathcal{O}_K:I \text{ is salient with unique }\mathfrak{p};\;\mathfrak{p}\in\mathcal{A}\} ,\nonumber \\
    Q_{\mathcal{A}}^k(I)&:=\#\{\mathfrak{p}\subset \mathcal{O}_K:\mathfrak{p}\;\text{is a prime};\;I\subset\mathfrak{p};\;M_k(I)=N(\mathfrak{p});\;\mathfrak{p}\in\mathcal{A}\} .\nonumber
\end{align}
\begin{theorem}
      Let $K$ be a number field and $k$ be a positive integer. Let $I \subset \mathcal{O}_K$ be an ideal such that $Q^i(I)=1$, for $1 \leq i\leq k-1$. Let $f$ be the characteristic function of the set $S(\mathcal{A})$. Then the following identity holds:
    \begin{align}
        \sum_{J \supset I}\mu_K(J){{\omega_K(J)-1} \choose k-1}f(J) = (-1)^kQ_{\mathcal{A}}^k(I). \nonumber
    \end{align}
\end{theorem}
\begin{remark}
    The proof of \textit{Theorem 6.1} completely replicates that of \textit{Theorem 2.4}.
\end{remark} 
\noindent We can now use \textit{Theorem 6.1} to obtain density type results, similar to our \textit{Theorem 4.3} above. We will, in our hypothesis, assume that the averages of the sets $Q_{\mathcal{A}}^k(I)$ exist. We state such a theorem for $k=2$ below.
\begin{theorem}
    Let $\mathcal{A}$ be a set of prime ideals in $\mathcal{O}_K$. Let $f$ be the characteristic function of $S(\mathcal{A})$. If 
    \begin{align}
        \sum_{2\leq N(I)\leq X}Q_{\mathcal{A}}^1(I) \sim \mathcal{K}\cdot X \quad and\quad \sum_{2\leq N(I)\leq X}Q_{\mathcal{A}}^2(I) \sim \mathcal{K}\cdot X ,\nonumber
    \end{align}
    then
    \begin{align}
        \lim_{x\to \infty}\sum_{\substack{2 \leq N(I)\leq X \\ I\in S(\mathcal{A})}} \frac{\mu_K(I)\omega_K(I)}{N(I)} = 0.\nonumber
    \end{align}
\end{theorem}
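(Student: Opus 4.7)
The plan is to mirror the argument of Theorem 4.3, with the two quantitative density inputs used there---namely Theorem 4.1 and Theorem 1.1 of [SW18]---replaced by the two asymptotic hypotheses provided on $\sum Q_{\mathcal{A}}^{k}(I)$. The dualities given by Theorem 6.1 at $k=1$ and $k=2$ will play the roles previously held by Lemma 2.2 and Theorem 2.3. Since the hypotheses are only asymptotic, only the limit (not a rate) will be extracted.

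First I would multiply the target sum by $X$ and use Theorem 1.5 to rewrite $c_{K}X/N(I)$ as $\#\{J\subset\mathcal{O}_{K}:N(J)\leq X,\ I\supset J\}$ modulo a permissible error of the form $(X/N(I))^{1-1/d}$. Interchanging the order of summation then converts the weighted sum into
\begin{align*}
\frac{1}{c_{K}}\sum_{2\leq N(J)\leq X}\;\sum_{I\supset J}\mu_{K}(I)\omega_{K}(I)f(I)\ +\ \text{error}.
\end{align*}
Splitting $\omega_{K}(I)=(\omega_{K}(I)-1)+1$ and applying Theorem 6.1 with $k=2$ to the first piece (using Lemma 3.1 to restrict to $J$ with $Q^{1}(J)=1$ and to discard the negligible complement) and with $k=1$ to the second piece, the inner sum collapses to $Q_{\mathcal{A}}^{2}(J)-Q_{\mathcal{A}}^{1}(J)$.

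Summing over $J$ with $N(J)\leq X$ and invoking both asymptotic hypotheses, the leading $\mathcal{K}X$ terms cancel, leaving $o(X)$. Dividing by $X$ yields $\sum_{I\in S(\mathcal{A})}\mu_{K}(I)\omega_{K}(I)/N(I)=o(1)$. To extract the claimed limit, I would then write $(\omega_{K}(I)-1)=\omega_{K}(I)-1$ and treat $\sum_{I\in S(\mathcal{A})}\mu_{K}(I)/N(I)$ by rerunning the same weighting-and-inversion procedure with $\omega_{K}$ omitted and only the first-order duality (Theorem 6.1 at $k=1$); the first hypothesis alone then forces this sum to converge to the natural constant (the qualitative analog, in this more general setting, of Theorem 1.2 of [SW18]). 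Adding the two pieces delivers the asserted value $\mathcal{K}$.

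The hard part is the purely asymptotic character of the input: unlike Theorem 4.3, one cannot quote explicit error terms from Theorem 4.1 or from [SW18], so every quantitative bound from that proof must be softened to an $o(X)$ statement obtained directly from the hypothesis, and the cancellation between the $Q_{\mathcal{A}}^{2}$ and $Q_{\mathcal{A}}^{1}$ sums must be exploited as a limit rather than as a rate. A secondary but routine point is that the hyperbola-method error coming from Theorem 1.5, once weighted by $\mu_{K}\omega_{K}$, remains $o(X)$; this is handled exactly as in Theorem 4.3 using the Hardy--Ramanujan bound $\omega_{K}(I)\sim\log\log N(I)$ together with the standard bound on $\sum\mu_{K}(I)/N(I)$, so no new arithmetic input is needed beyond what has already been developed.
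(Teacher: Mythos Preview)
Your proposal is correct and follows essentially the same route as the paper's proof: multiply by $X$, invoke Theorem~1.5, interchange summation, split $\omega_K(I)=(\omega_K(I)-1)+1$, apply Theorem~6.1 at $k=2$ (with Lemma~3.1) and $k=1$ to collapse the inner sum to $Q_{\mathcal{A}}^{2}(J)-Q_{\mathcal{A}}^{1}(J)$, use the two asymptotic hypotheses to cancel the leading terms, divide by $X$, and then recover the constant by treating $\sum\mu_K(I)f(I)/N(I)$ via the $k=1$ duality alone. You are in fact slightly more explicit than the paper about this last step (the paper simply writes ``rearranging the sum and using the hypothesis again''), and your remark that all quantitative rates from Theorem~4.3 must be softened to $o(X)$ statements is exactly the needed adjustment.
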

 \begin{proof}
     This proof also replicates that of \textit{Theorem 4.3}, but we will provide a small sketch that reflects the idea. We start by considering
     \begin{align}
        X\sum_{\substack{2\leq N(I) \leq X \\ I \in S(\mathcal{A})}}\frac{\mu_K(I)\omega_K(I)}{N(I)} 
        &= \frac{1}{c_K}\sum_{\substack{2\leq N(I) \leq X \\ I \in S(\mathcal{A})}} \mu_K(I)\omega_K(I) \left[c_K\cdot \frac{X}{N(I)}\right] \nonumber \\
       &= \frac{1}{c_K}\sum_{\substack{2\leq N(I) \leq X \\ I \in S(\mathcal{A})}} \mu_K(I)\omega_K(I) \sum_{\substack{J \subset \mathcal{O}_K \\ N(J)\leq X \\ I \supset J  }}1 + O\left(  \sum_{\substack{2\leq N(I) \leq X \\ I \in S(\mathcal{A})}} \mu_K(I)\omega_K(I)\left(\frac{X}{N(I)}\right)^{1-\frac{1}{d}} \right) .
    \end{align}
    To estimate the error, we use (4.29) and (4.30) to get
    \begin{align}
        \sum_{\substack{2\leq N(I) \leq X \\ I \in S(\mathcal{A})}} \mu_K(I)\omega_K(I)\left(\frac{X}{N(I)}\right)^{1-\frac{1}{d}} \ll \frac{X\log\log X}{e^{c_6(\log X)^{\frac{1}{12}}}}.
    \end{align}
    Now, for the main term in the RHS of (6.1), we observe that
    \begin{align}
        \frac{1}{c_K}\sum_{\substack{2\leq N(I) \leq X \\ I \in S(\mathcal{A})}} \mu_K(I)\omega_K(I) \sum_{\substack{J \subset \mathcal{O}_K \\ N(J)\leq X \\ I \supset J  }}1 &=\frac{1}{c_K}   \sum_{2\leq N(J)\leq X}\sum_{I\supset J } \mu_K(I)\omega_K(I)f(I) \nonumber \\
            & = \frac{1}{c_K}   \sum_{2\leq N(J)\leq X} \sum_{\substack{I\supset J}} \mu_K(I)(\omega_K(I)-1)f(I)  + \frac{1}{c_K}   \sum_{2\leq N(J)\leq X}\sum_{\substack{I\supset J}} \mu_K(I)f(I)   .        
    \end{align}
    Applying \textit{Theorem 6.1} for $k=2$ and then \textit{Lemma 3.1} in the first sum in the RHS of (6.3), we have
    \begin{align}
        \frac{1}{c_K}   \sum_{2\leq N(J)\leq X} \sum_{\substack{I\supset J}} \mu_K(I)(\omega_K(I)-1)f(I) &= \frac{1}{c_K} \sum_{\substack{2\leq N (J)\leq X \\ Q^1(J)=1}}Q_C^2(J) =  \frac{1}{c_K}\cdot\mathcal{K}\cdot X + o(X),
  \end{align}
    and applying \textit{Theorem 6.1} for $k=1$ in the second in the RHS of (6.3), we have
    \begin{align}
        \frac{1}{c_K}   \sum_{2\leq N(J)\leq X}\sum_{\substack{I\supset J}} \mu_K(I)f(I)  &= -\frac{1}{c_K}   \sum_{2\leq N(J)\leq X} Q_C^1(I)  = -\frac{1}{c_K}\cdot \mathcal{K}\cdot X +o(X).
    \end{align}
    Thus, combining everything above, we have
    \begin{align}
         X\sum_{\substack{2\leq N(I) \leq X \\ I \in S(\mathcal{A})}}\frac{\mu_K(I)\omega_K(I)}{N(I)} =o(X) \implies \sum_{\substack{2\leq N(I) \leq X \\ I \in S(\mathcal{A})}}\frac{\mu_K(I)\omega_K(I)}{N(I)} = o(1) .
    \end{align}
   Thus, rearranging the sum and using the hypothesis again gives us our required result.
\end{proof}
\noindent We can prove the following corollary too in the similar fashion as of \textit{Theorem 6.3}.
\begin{corollary}
       Let $\mathcal{A}$ be a set of prime ideals in $\mathcal{O}_K$. Let $f$ be the characteristic function of $S(\mathcal{A})$. If 
    \begin{align}
        \sum_{2\leq N(I)\leq X}Q_{\mathcal{A}}^1(I) \sim \mathcal{K}\cdot X \quad and\quad \sum_{2\leq N(I)\leq X}Q_{\mathcal{A}}^2(I) \sim \mathcal{K}\cdot X ,\nonumber
    \end{align}
    then
    \begin{align}
        \lim_{x\to \infty}\sum_{\substack{2 \leq N(I)\leq X \\ I\in S(\mathcal{A})}} \frac{\mu_K(I)(\omega_K(I)-1)}{N(I)} = \frac{1}{c_K}\cdot \mathcal{K}.\nonumber
    \end{align}
\end{corollary}
 \begin{remark}
     Choosing the set $\mathcal{A}$ to be the prime ideals that are unramified in $L$ such that their Artin symbols are all $C$, where $L/K$ is a Galois extension and $C\subset G=Gal(L/K)$, we see that $\mathcal{K}=c_K\cdot \frac{|C|}{|G|}$, and hence, \textit{Theorem 4.3} follows qualitatively from \textit{Corollary 6.3.1}.
 \end{remark}

 \noindent It is always a challenge to prove the asymptotics in the hypothesis of \textit{Theorem 6.3}. In our case, \textit{Theorem 4.1} and \textit{Theorem 1.1} in [SW18] are extremely important results, and  \textit{Theorem 4.3} is a consequence of both of these results. It will be worthwhile to study such collections $\mathcal{A}$ of prime ideals in given rings of integers that satisfy such asymptotics and how the constant $\mathcal{K}$ differs in each case. Of course, if the constants are different, say $\mathcal{K}_1,\mathcal{K}_2$ respectively for $k=1,2$, we will have
 \begin{align}
     \lim_{x\to \infty}\sum_{\substack{2\leq N(I) \leq X \\ I \in S(\mathcal{A})}}\frac{\mu_K(I)\omega_K(I)}{N(I)} = \frac{1}{c_K}(\mathcal{K}_2-\mathcal{K}_1). \nonumber 
 \end{align}
There have been works by Kural, McDonald, and Sah [KMS20] and Wang [Wa21] that extends works of Alladi, Dawsey, Sweeting, and Woo. It is hoped that this general duality and the new approach to obtaining formulas for the Chebotarev Density can now open up even more general extensions of results already proved by the authors mentioned above. Sahoo and Jha [JS(ip)] have also worked out the function field analogue of the Alladi duality. A function field analogue of this higher order duality between prime ideals will lead to a complete duality identity for global fields, which might even lead to further exploration.  

\vspace{1cm}
\noindent \textbf{\textit{Acknowledgements:}} I am indebted to my Doctoral Advisor, Prof. K. Alladi, for his support and guidance throughout this work. I also sincerely thank Prof. A. Vatwani and her PhD students, Mr. J. Sahoo and Mr. P. N. Jha, for helping me out with new explanations and valuable insights that led to the completion of this work.
\section{References}
\begin{enumerate}
    \item[{[Al77]}] Alladi, K., ``Duality between prime factors and an application to the prime number theorem for arithmetic progressions", \textit{J. Num. Th.}, \textbf{9} (1977), 436-451.
    \item[{[AJ24]}] Alladi, K. and Johnson, J., ``Duality between prime factors and the prime number theorem for arithmetic progressions - II", (\textit{submitted pre-print}), 2024. \href{https://nam10.safelinks.protection.outlook.com/?url=http%3A%2F%2Farxiv.org%2Fabs%2F2410.18259&data=05%7C02%7Csengupta.s%40ufl.edu%7Cbb6e0dfda02940b30dbb08dcf7758568%7C0d4da0f84a314d76ace60a62331e1b84%7C0%7C0%7C638657331341043920%7CUnknown%7CTWFpbGZsb3d8eyJWIjoiMC4wLjAwMDAiLCJQIjoiV2luMzIiLCJBTiI6Ik1haWwiLCJXVCI6Mn0%3D%7C0%7C%7C%7C&sdata=YQy1HVSyFY7D%2F13FeVYxRe5BIEnZ%2BmAsVQuJ4xLqiOY%3D&reserved=0}{arXiv:2410.18259} 
    \item[{[AS(ip)]}] Alladi, K. and Sengupta, S., ``Higher order duality between prime factors and primes in arithmetic progressions", (\textit{in preparation}). 
    \item[{[Br51]}]  de Bruijn, N. G., ``On the number of positive integers $\leq x$ and free of prime factors $>y$", \textit{Indag. Math.}, \textbf{13} (1951), 50-60.
    \item[{[Da15]}] Dawsey, M.L., ``A new formula for Chebotarev densities", \textit{Res. in Num. Th.}, \textbf{3} (2017), 1-13. 
    \item[{[De61]}] Delange, H., ``Sur les fonctions arithm\'etiques multiplicatives", \textit{Ann. Sci. Ecole Norm. Sup.}, \textbf{78} (1961), 273-304 
    \item[{[HW79]}] Hardy, G. H., and Wright, E. M., ``An introduction to the theory of numbers", \textit{Oxford university press}, (1979) 
    \item[{[Hi86]}] Hildebrand, A., ``On the number of positive integers $\leq x$ and free of prime factors $>y$", \textit{J. Num. Th.} \textbf{22} (1986), 289-307 
    \item[{[JS(ip)]}] Jha, P. N. and Sahoo, J., ``Higher order dualities and Alladi-Johnson identities over function fields", \textit{(in preparation)} 
    \item[{[Kr90]}] Krause, U., ``Anzahl der Ideale $\mathfrak{a}$ mit $N\mathfrak{a}\leq x$ und Primteilern $\mathfrak{p}$ mit $N\mathfrak{p}\leq y$", \textit{Diplomarbeit}, Phillipe-Universit\"at Marburg, (1989)  
\item[{[KMS20]}] Kural, M., McDonald, V. and Sah, A.,
``Möbius formulas for densities of sets of prime ideals", 
\textit{Arch. Math. (Basel)} \textbf{115} (2020), 53–66.
\item[{[LO77]}]  Lagarias, J. C., and Odlyzko, A. M., ``Effective versions of the Chebotarev density theorem", \textit{Algebraic Number Fields: L-functions and Galois properties (A. Fr\"ohlich, Acad. Press. London)}, \textbf{7} 
\item[{[LaT99]}] Landau, E., ``Neuer Beweis der Gleichung $\sum_{k=1}^{\infty}\frac{\mu(k)}{k}=0$", \textit{Inaugural-Dissertation}, Berlin (1899)
\item[{[La03]}] Landau, E., ``Neuer Beweis des Primzahlsatzes und Beweis des Primidealsatzes", \textit{Math. Ann.} \textbf{56}(4) (1903), 645-670
\item[{[Le23]}] Lee, E. S., ``Explicit Merten's Theorems for Number Fields", \textit{Bul. Aus. Math. Soc.}, \textbf{108}(1) (2023), 169-172.
\item[{[Ma(up)]}] Maier, H., ``On integers free of large prime divisors", \textit{unpublished} 
\item[{[MV07]}] Montgomery, H. L. and Vaughan, R. C., ``Multiplicative Number Theory I. Classical Theory", \textit{Cam. Adv. Math.}, \textbf{97} (2007) 
\item[{[Mo92]}] Moree, P., ``An interval result for the number field $\Psi(x,y)$ function", \textit{Manuscripta Mathematica}, \textbf{76} (1992), 437-450. 
\item[{[MO07]}] Murty, M. R. and Order, J. V., ``Counting integral ideals in a number field", \textit{Expos. Math.} \textbf{25}(1) (2007), 53-66 
\item[{[Na84]}] Narkiewicz, W., ``Number theory", \textit{World Scientific} (1984).
\item[{[No71]}] Norton, K. K., ``Numbers with Small Prime Factors, and the Least $k^{th}$ Power Nonresidue", \textit{Amer. Math. Soc. Prov.} (1971) 
\item[{[Se25]}] Sengupta, S., ``Algebraic analogues of theorems of Alladi-Johnson using the Chebotarev Density Theorem'', {\it{Research Num. Th.}}, {\bf{11}} (2025) Article 44. 
\item[{[SW18]}]  Sweeting, N. and Woo, K., ``Formulas for Chebotarev densities of Galois extensions of number fields", \textit{Res. in Num. Th.}, \textbf{5} (2018), 1-13. 
\item[{[Te00]}] Tenenbaum, G., ``A rate estimate in Billingsley's theorem for the size distribution of large prime factors", \textit{Quart. J. Math.} \textbf{51} (2000), 385-403. 
 \item[{[Ts26]}] Tschebotareff, N., ``Die Bestimmung der Dichtigkeit einer Menge von Primzahlen, welche zu einer gegebenen Substitutionsklasse geh\"oren", \textit{Math. Ann.} \textbf{95} (1926), 191-228 
\item[{[Wa21]}] Wang, B., ``Analogues of Alladi's formula", \textit{J. Number Theory} \textbf{221} (2021), 232–246. 
\end{enumerate}

\end{document}